\documentclass[sn-mathphys-num]{sn-jnl}


\usepackage{graphicx}%
\usepackage{multirow}%
\usepackage{amsmath,amssymb,amsfonts}%
\usepackage{amsthm}%
\usepackage{mathrsfs}%
\usepackage[title]{appendix}%
\usepackage{xcolor}%
\usepackage{textcomp}%
\usepackage{manyfoot}%
\usepackage{booktabs}%
\usepackage{algorithm}%
\usepackage{algorithmicx}%
\usepackage{algpseudocode}%
\usepackage{listings}%

\theoremstyle{thmstyleone}%
\newtheorem{theorem}{Theorem}

\newtheorem{lemma}[theorem]{Lemma}%

\theoremstyle{thmstyletwo}%
\newtheorem{remark}{Remark}%

\theoremstyle{thmstylethree}%

\raggedbottom

\begin{document}

\title[A moving mesh Discontinuous Galerkin method]{A velocity-based moving mesh Discontinuous Galerkin method for the advection-diffusion equation}

\author*[1]{\fnm{Ezra} \sur{Rozier}}\email{ezra.rozier@inria.fr}

\author[2]{\fnm{J\"{o}rn} \sur{Behrens}}\email{joern.behrens@uni-hamburg.de}

\affil*[1]{\orgdiv{Centre INRIA de l'Universit\'{e} de Rennes}, \orgname{INRIA}, \orgaddress{\street{263 Avenue du G\'{e}n\'{e}ral de Gaulle}, \city{Rennes}, \postcode{35000}, \country{France}}}

\affil[2]{\orgdiv{Department of Mathematics}, \orgname{University of Hamburg}, \orgaddress{\street{Bundesstrasse 55}, \city{Hamburg}, \postcode{20146}, \country{Germany}}}

\abstract{In convection-dominated flows, robustness of the spatial discretisation is a key property. While Interior Penalty Galerkin (IPG) methods already proved efficient in the situation of large mesh Peclet numbers,  Arbitrary Lagrangian-Eulerian (ALE) methods are able to reduce the convection-dominance by moving the mesh. In this paper, we introduce and analyse a velocity-based moving mesh discontinuous Galerkin (DG) method for the solution of the linear advection-diffusion equation. By introducing a smooth parameterized velocity $\Tilde{\textbf{V}}$ that separates the flow into a mean flow, also called moving mesh velocity, and a remaining advection field $\textbf{V}-\Tilde{\textbf{V}}$, we made a convergence analysis based on the smoothness of the mesh velocity. Furthermore, the reduction of the advection speed improves the stability of an explicit time-stepping. Finally, by adapting the existing robust error criteria to this moving mesh situation, we derived robust \textit{a posteriori} error criteria that describe the potentially small deviation to the mean flow and include the information of a transition towards $\textbf{V}=\Tilde{\textbf{V}}$.
}

\keywords{discontinuous Galerkin method, a posteriori error bound, a priori error bound, moving mesh}


\pacs[MSC Classification]{65N15, 65N30}

\pacs[Acknowledgements]{The authors acknowledge the support by the Deutsche Forschungsgemeinschaft (DFG) within the Research Training Group GRK 2583 "Modeling, Simulation and Optimization of Fluid Dynamic Applications”.}

\maketitle

\section{Introduction}\label{sec:intro}

Approaches to approximate advection phenomena with a moving mesh have been explored in several forms for a long time (e.g., \cite{hirt74} from 1974). The advection-dominance disappears and the scheme gains stability due to the reduction of the Courant number. In this landscape, the ALE approach that is based on a dynamic transformation map, and that we can see at work for finite elements in the Lagrange-Galerkin method (see \cite{bause02}, \cite{chrysafinos06}, \cite{chrysafinos08}) or with other forms of spatial discretisation, for finite volumes (\cite{klingenberg15}) or DG methods (\cite{zhou19}). In the past, the ALE approach has been combined successfully with error analysis (\cite{bause02}, \cite{chrysafinos08}). Furthermore, the ALE-DG approach, which provides conservation properties \cite{cslam}, has already been discussed for problems with nonlinear diffusion (\cite{balaszova18}) or in semi-Lagrangian methods for linear advection (\cite{restelli06}) and advection-diffusion (\cite{bonnaventura21}, \cite{cai20}) where the precision largely depends on the regularity of the advection velocity.

However, even after the advection-dominance has been removed or reduced, the presence of a small diffusion term and consequently a big mesh Peclet number $\frac{h}{\varepsilon}$ means that the space semi-discretisation still needs to be robust. Therefore, we need to preserve the robustness properties found in IPG (\cite{wheeler79}, \cite{cangiani17} for complex geometries and diffusion tensor) or in its weighted version (\cite{ern05}, \cite{burman06}). These approaches have given rise to robust \textit{a posteriori} error estimates in order to develop adaptive mesh refinement (AMR) for the steady-state (\cite{schotzau09}) or for the unsteady case (\cite{cangiani14}). These error estimates (see initially \cite{schotzau09}) are based, for the steady-state problem, on remainder-based criteria and are proven through elliptic reconstruction.

In this paper we build a velocity-based moving mesh DG method that shows that taking precise account of what the deformation map does to the elliptic term preserves the convergence and robustness properties of the IPG. Since the smoothness of the deformation map is important in order to ensure the scheme's convergence, we separate the advection field into a smooth mean flow with which the map moves and a remaining advection field that contains the small-scale deviations from the mean flow. 

Splitting the advection field helps us to use a more regular moving mesh velocity that avoids the strong deformations that create errors in the existing ALE approaches (for instance entanglement), furthermore it preserves the ellipticity of the problem thus resulting both in the convergence of the IPG formulation and the use of remainder-based refinement criteria. Additionally this splitting makes possible an arbitrarily high resolution of the moving mesh's ordinary differential equation with no impact on the degrees of freedom of the DG method. Finally, the remaining advection field, even with strong deformation (large spatial derivative), can still remain small, resulting in a larger convergence rate than that of the DG method on a static mesh.

The \textit{a priori} study of the error estimate shows that when possible, the maximal reduction of the advection term saves half an order of convergence for the spatial semi-discretisation. The \textit{a posteriori} error estimate allows us to focus more closely on the elliptical term and to have a robust approach to the error. Finally, the test case involving a boundary layer problem shows that the ALE method makes it possible to get rid of the advection-dominance and focus on small-scale effects. This would not be possible with a static mesh method.

We consider the unsteady scalar advection-diffusion equation:

\begin{equation}\label{eq:c-d}
\begin{array}{ll}
     \frac{\partial u}{\partial t} + \textbf{V} \cdot \nabla u - \varepsilon \Delta u = f \hspace{5mm} & [0,T] \times \Omega \\
     u = u_D & [0,T] \times \Gamma_D \\
     \varepsilon \frac{\partial u}{\partial \textbf{n}} = u_N & [0,T] \times \Gamma_N \\
     u(x,0) = u_0 (x) & \Omega
\end{array}
\end{equation}

\noindent in a bounded space-time cylinder with a convex cross-section $\Omega \subset \mathbb{R}^2$, having a Lipschitz boundary $\Gamma$ consisting of two disjoint connected parts $\Gamma_D$ and $\Gamma_N$. The final time $T$ is arbitrary, but kept fixed in what follows. We assume that the data satisfy the following conditions

\begin{align}
    &f \! \in\!  C(0,T;L^2(\Omega)), \;  u_N\!  \in \! C(0,T;L^2(\Gamma_N)), \; \textbf{V} \! \in\!  C(0,T;W^{1,\infty}(\Omega))^2. \label{ass:A1} \\ 
    & u_D \mbox{ is the trace of some } u^*\in C(0,T;H^1(\Omega))\cap L^\infty(0,T;L^\infty(\Omega)) \mbox{ on } \Gamma_D\times[0,T] \label{ass:A2}\\
    & \forall t \in [0,T], \; \{ x \in \Gamma \colon  \textbf{V}(t,x) \cdot \textbf{n}(x) < 0\} \subset \Gamma_D \label{ass:A4}\\ 
    & 0 < \varepsilon  \label{ass:A5}\\
    & u_0 \in L^2(\Omega).\label{ass:A6}
\end{align}

\noindent In this formulation, $\textbf{V}$ is a prescribed velocity (for instance if \eqref{eq:c-d} is the equation for the concentration of a chemical species, then $\textbf{V}$ is the velocity of the solute).

\noindent Writing $\mathcal{Q}^T=[0,T]\times \Omega$, for $p\geq 1$ and $k\in \mathbb{N}$, we define the spaces $L^p(0,T;Z)$, respectively $C^k(0,T;Z)$ (with $Z$ a Banach space of measurable functions with values in $\Omega$), that consist of functions $v \colon \mathcal{Q}^T  \rightarrow \mathbb{R}$ for which:

\begin{align}
    &\lVert v \lVert_{L^p(0,T;Z)}^p = \int_0^T \lVert v(t,\cdot)\rVert _Z^p d t < +\infty \mbox{ for } 1\leq p< +\infty \label{LpLpOmega}\\
    &\lVert v \rVert_{L^\infty(0,T;Z)} = \underset{0\leq t \leq T}{\mbox{ess sup}} \lVert v(t,\cdot) \rVert _Z <+\infty \mbox{ for } p=+\infty \nonumber
\end{align}

\noindent Respectively $ \exists w\colon [0,T] \rightarrow Z$ s.t. $w(t)=v(t,\cdot)$ and $w\in C^k([0,T])$. For a Sobolev space $Z(\Omega)$ we will have $Z_p(\mathcal{Q}^T)=L^p(0,T;Z(\Omega))$ and $Z_{C^k}(\mathcal{Q}^T)=C^k(0,T;Z(\Omega))$. For instance $H^1_p(\mathcal{Q}^T)=L^p(0,T;H^1(\Omega))$ and $W^{1,\infty}_C(\mathcal{Q}^T)=C(0,T;W^{1,\infty}(\Omega))$. Finally the norm associated with $Z_p(\mathcal{Q}^T)^2$ is $\lVert \; \lVert \;\lVert \cdot \lVert_Z \lVert_{l^2(\{1;2\})} \lVert_{L^p([0,T])}$.

This equation will be reformulated with the help of a flowmap in section \ref{sec:flowmaps}. In section \ref{sec:sd} we introduce the DG semi-discretisation of equation \eqref{eq:mmform}, followed in section \ref{sec:errest} by a study of an \textit{a priori} error estimate in order to discuss the convergence of the method and an \textit{a posteriori} error estimate in order to discuss the robustness, paving the way for AMR. Section \ref{sec:testcase} presents a boundary layer problem in order to demonstrate the properties of the method. Finally section \ref{sec:conc} presents our conclusions.

In what follows, we will consider the case where $u_D=0$. Consequently, $u^*=0$. For the case $u_D \neq 0$, we refer to remark \ref{rq:jump}.

\section{Flow maps}
\label{sec:flowmaps}
We consider the ALE-DG method as a classical DG method defined on a deforming space. In this case, the complexity of the geometry occurring from the fact that the mesh moves will be featured in the equation itself. In order to do so, we define a mesh velocity $\tilde{\textbf{V}}\in C(0,T;W^{3,\infty}_C(\mathcal{Q}^T))^2$ s.t.

\begin{align}
    & \tilde{\textbf{V}}\cdot \textbf{n}=0 \mbox{ on } [0,T]\times \Gamma   \label{ass:B2}\\
    & -\frac{1}{2}  \nabla \cdot (\textbf{V}(t,x)-\tilde{\textbf{V}}(t,x)) = \beta(t) > 0 \label{ass:B4}\\
    & \forall t\in [0,T], \forall x \in \Omega \;|\nabla \cdot (\textbf{V}(t,x)-\tilde{\textbf{V}}(t,x))|\leq c_* \beta(t) \label{ass:B5}
\end{align}

\noindent for a positive constant $c_*$ independent of space and time. 
\eqref{ass:B4} and \eqref{ass:B5} are there to ensure the coercivity and continuity of the method but they are not restrictive, we will explain this later in remark \ref{rq:coercivity}. 
We also decide that $\tilde{\textbf{V}}$ vanishes on the boundary between $\Gamma_N$ and $\Gamma_D$. Assumption \eqref{ass:B2} is necessary to ensure that the computational domain does not change. $\beta (t)$ will be replaced in what follows by $\beta$ and we omit the time-dependence. The condition $\tilde{\textbf{V}} \in W^{3,\infty}_C(\mathcal{Q}^T)^2$ will be used in the \textit{a priori} and \textit{a posteriori} estimates.

\noindent Note that $\tilde{\textbf{V}}$ is not identical to the flow $\textbf{V}$, but we choose it to be more regular with the above properties. Therefore $\textbf{V}-\tilde{\textbf{V}}$ is a non-zero quantity, it is the small-scale deviation from the mean (regular) flow $\tilde{\textbf{V}}$.

\noindent We introduce the flow map to distinguish a Lagrangian (or reference) variable $X$ and an Eulerian (or spatial) variable $x$. $X$ lives in a space $\hat{\Omega}$ that is later defined (and $x$ lives in $\Omega$). We carry out all the computations in $\hat{\Omega}$ and express them back in $\Omega$.

\noindent Given $\tilde{\textbf{V}}$, the associated flow map, $x = \chi(t,X)$, satisfies

\begin{equation}
\Dot{x}(t,X)=\tilde{\textbf{V}}(t,x(t,X)), \hspace{3mm} x(0,X)=X.
\end{equation}

\noindent As $\tilde{\textbf{V}}\in W^{3,\infty}_C(\mathcal{Q}^T)^2$, then the trajectories aren't crossing and we have $\chi(t,.):\hat{\Omega} \rightarrow \Omega$ is a $C^2$-diffeomorphism and the Jacobian $\textbf{F}=\textbf{F}(t,X) = [\frac{\partial x_i}{\partial X_j}]_{i,j}$ satisfies

\begin{equation}
    \Dot{\textbf{F}}(t,X)=(\nabla\tilde{\textbf{V}}(t,x))\textbf{F}(t,X), \hspace{3mm} \textbf{F}(0,X)=\textbf{I}, \hspace{3mm} x=\chi(t,X).
\end{equation}

\noindent The determinant $J = \mbox{det}(\textbf{F})$ satisfies $\Dot{J}=J(\nabla\cdot \tilde{\textbf{V}})$. Finally the smoothness of $\chi$ ensures that $J>0$ and $\textbf{F}^{-1}\textbf{F}^{-T}$ is positive definite.

\noindent For a fixed domain $\hat{\Omega} = \Omega$ with Lipschitz boundary $\Gamma_r = \Gamma$ let $\Omega (t) = \chi (t,\hat{\Omega})$. The unit normal outward $\textbf{n}_r = \textbf{n}_r(X)$ to $\Gamma_r$ and the unit normal outward $\textbf{n} = \textbf{n}(t,x)$ to $\Gamma$ are related by the formula:

\begin{equation}
    \textbf{n}(t,x)=\left(\frac{\textbf{F}^{-T}\textbf{n}_r}{|\textbf{F}^{-T}\textbf{n}_r|}\right)(t,X), \hspace{3mm} X \in \partial \hat{\Omega}, \hspace{3mm} x=\chi(t,X).
\end{equation}

\noindent Because of the vanishing normal of $\tilde{\textbf{V}}$ on $\Gamma_r$ (condition \eqref{ass:B2} above), $\hat{\Omega}=\Omega(t)=\Omega$, $\chi(t,\Gamma_D)=\Gamma_D $ and $\chi(t,\Gamma_N)=\Gamma_N $.

\noindent For any function $v(t,x)$ we introduce the $\hat{ }\,$-notation s.t. $\hat{v}(t,X) = v(t,x(t,X))$ and reciprocally $v$ can be defined thanks to $\hat{v}$. This notation also stands for functions independent of time, $\hat{v}(t,X)= v(x)$ and $v(t,x)= \hat{v}(X)$. 

\noindent Then

\begin{equation}
    \hat{v}_t=v_t + \tilde{\textbf{V}} \cdot \nabla v,\; \nabla v=\textbf{F}^{-T}\nabla_X \hat{v}\mbox{ and } \Delta v = \frac{1}{J}\nabla_X \cdot (J\textbf{F}^{-1}\textbf{F}^{-T}\nabla_X\hat{v}).
\end{equation}

\noindent When writing $\hat{f}(t,X) = f(t,x(t,X)),$ $\hat{u}_D(t,X) = u_D(t,x),$ $\hat{u}_N(t,X) = u_N(t,x),$ and $\hat{u}_0(X) = u_0(x(0,X))$, the PDE in \eqref{eq:c-d} can now be written in the moving framework, using $\tilde{\textbf{V}}$ as the deformational flow:

\begin{equation}\protect\label{eq:mmform}
\frac{\partial \hat{u}}{\partial t} + (\textbf{V}-\tilde{\textbf{V}}) \cdot \textbf{F}^{-T} \nabla_X \hat{u} - \frac{\varepsilon}{J} \nabla_X \cdot \{J\textbf{F}^{-1}\textbf{F}^{-T}\nabla_X \hat{u}\} = \hat{f}
\end{equation}

\noindent Note that $\frac{\partial \hat{u}}{\partial t}$ corresponds to the material derivative (i.e. Lagrangian framework) of $u$. Therefore, the moving advection-diffusion equation in \eqref{eq:mmform} has a much smaller advection term, given by $\textbf{V}-\tilde{\textbf{V}}$.

\noindent In the following we will discretise \eqref{eq:mmform} with a DG method in space. This approach allows us to compute the approximation on the static space $\hat{\Omega}$ with a partition $\mathcal{T}_h$ defined later.

\section{The semi-discrete interior penalty discretisation formulation for the unsteady advection-diffusion equation}
\label{sec:sd}

In this section we will discuss the DG formulation of \eqref{eq:mmform}. We start with the weak form of the equations and define the semi-discrete problem in the first two subsections that leads to the definition of the discrete variational problem \eqref{weakform:discr}. Some properties of the operators used for the semi-discretisation are stated in subsection \ref{ssec:prop}, they show the well-posedness of our problem and open the way for the error estimates that will be discussed in section \ref{sec:errest}.

\subsection{Notation and weak form}

\noindent Let $\hat{\omega} \subset \hat{\Omega}$, we define the following:

\begin{equation*}
    \lVert \hat{v} \rVert^2_{H_{\hat{\omega}}(t)} = \int_{\hat{\omega}} \hat{v}^2 J(t,\cdot)dX, \hspace{3mm} |\hat{v}|^2_{U_{\hat{\omega}}(t)} = \int_{\hat{\omega}} (\nabla_X \hat{v})^T \textbf{F}(t,\cdot)^{-1} \textbf{F}(t,\cdot)^{-T}(\nabla_X \hat{v}) J(t,\cdot)dX
\end{equation*}

\begin{remark}\label{rq:normequivalence}
    If $\hat{\omega}=\hat{\Omega}$, the index $\hat{\Omega}$ is omitted. Denoting $v(t,\chi(t,X)) = \hat{v}(X)$ and $\omega(t) = \chi(t,\hat{\omega})$ then

\begin{equation*}
    \lVert v \rVert_{L^2(\omega)}=\lVert\hat{v}\rVert_{H_{\hat{\omega}}(t)} \mbox{ and } |v|_{H^1(\omega)}=|\hat{v}|_{U_{\hat{\omega}}(t)}.
\end{equation*}

\noindent The equalities show that the metrics can be constructed within the framework of the reference variable $X$: by denoting $e$ the approximation error, the bounds on $\|\hat{e}\|_{H(t)}$ and $|\hat{e}|_{U(t)}$ represent $L^2$- and $H^1$-bounds on $e$.
\end{remark}

\noindent Writing $\hat{\mathcal{Q}}^T=[0,T]\times \hat{\Omega}$, for $p\geq 1$, $k\in \mathbb{N}$ and $Z$ a Banach space of measurable functions with values in $\hat{\Omega}$, we define $L^p(0,T;Z)$ similarly as for measurable functions with values in $\Omega$ in \eqref{LpLpOmega} and for a Sobolev space $Z(\hat{\Omega})$ we will have $Z_p(\hat{\mathcal{Q}}^T)=L^p(0,T;Z(\hat{\Omega}))$ and $Z_{C^k}(\hat{\mathcal{Q}}^T)=C^k(0,T;Z(\hat{\Omega}))$. $k$ is omitted if equal to 0.

\noindent Set $H^1_D(\hat{\Omega}) \! = \! \{\hat{v} \in H^1(\hat{\Omega}) \colon \hat{v}=0 \mbox{ on } \Gamma_D\}$.

\noindent For $\hat{u},\hat{v} \in H^1(\hat{\Omega})$ we define the following:

\begin{align*}
& a(\hat{u},\hat{v})  = \int_{\hat{\Omega}} J[\varepsilon (\textbf{F}^{-T}\nabla_X\hat{u})\cdot(\textbf{F}^{-T}\nabla_X\hat{v}) +(\textbf{V}-\tilde{\textbf{V}})\cdot(\textbf{F}^{-T}\nabla_X \hat{u})\hat{v}]\\
& l(\hat{v})   =  \int_{\hat{\Omega}} J \hat{f} \hat{v} + \int_{\Gamma_N} J \hat{u}_N \hat{v} |\textbf{F}^{-T}\textbf{n}_r|
\end{align*}

\noindent With these definitions the weak formulation of \eqref{eq:mmform} becomes:

\begin{multline} \label{weakform:ctn}
    \mbox{Find }\hat{u} \in H^1_{D\,2}(\hat{\mathcal{Q}}^T) \cap L^\infty_\infty(\hat{\mathcal{Q}}^T) \mbox{ s.t. } \hat{u}(0,\cdot)=u_0\\ 
     \mbox{and } \forall \hat{v} \in H_D^1(\hat{\Omega}),  \mbox{ for almost every } t \in [0,T],\; \int_{\hat{\Omega}} J\frac{\partial \hat{u}}{\partial t}\hat{v} = l(\hat{v}) - a(\hat{u},\hat{v})
\end{multline}

\noindent Since $J\neq 0$, $\textbf{F}^{-1}\textbf{F}^{-T}$ is positive definite and $\textbf{F}\in L^\infty_C(\mathcal{Q}^T)$, this problem has a unique solution. This solution satisfies $\frac{\partial \hat{u}}{\partial t} \in L^2_2(\hat{\mathcal{Q}}^T)$.

\noindent Integration by parts then yields

\begin{multline*}
     a(\hat{u},\hat{v}) = \int_{\hat{\Omega}} J[\varepsilon (\textbf{F}^{-T}\nabla_X\hat{u})\cdot(\textbf{F}^{-T}\nabla_X\hat{v}) -\hat{u}(\textbf{V}-\tilde{\textbf{V}})\cdot(\textbf{F}^{-T}\nabla_X \hat{v})- \nabla\cdot(\textbf{V}-\tilde{\textbf{V}}) \hat{u}\hat{v}] \\
      + \int_{\Gamma_N} J(\textbf{V}-\tilde{\textbf{V}})\cdot(\textbf{F}^{-T}\textbf{n}_r) \hat{u}\hat{v} 
\end{multline*}

The formulation \eqref{weakform:ctn} is a \textit{nonconservative} weak ALE formulation for problem \eqref{eq:c-d}. Regularity of the ALE map $\chi$ guarantees the parabolic nature of the resulting equation and consequently the existence of a unique solution to \eqref{weakform:ctn}.

\subsection{Bilinear forms and function spaces for the semi-discretisation}

To discretise \eqref{weakform:ctn}, we consider a regular and shape-regular mesh $\mathcal{T}_h = \{K\} $ that partition the computational domain $\hat{\Omega}$ into open triangles. We consider this mesh to be static. For simplicity we assume that $\hat{\Omega}$ is a polyhedron that is covered exactly by $\mathcal{T}_h$. We assume the following:

(i) The elements of $\mathcal{T}_h$ satisfy the minimal angle condition. Specifically, there is a constant $\xi_0>0$ such that $\frac{h_K}{r_K}<\xi_0$ where $h_K$ and $r_K$ denote, respectively, the diameters of the circumscribed and inscribed balls to $K$.

(ii) $\mathcal{T}_h$ is locally quasi-uniform; that is, if two elements $K_1$ and $K_2$ are adjacent in the sense that $\mu_{d-1}(\partial K_1 \cap \partial K_2)>0$ then $\mbox{diam}(K_1) \approx \mbox{diam}(K_2)$.

\noindent Here $\mu_{d-1}$ denotes the $(d-1)$-dimensional Lebesgue measure. Let $h_E$ be the size of an edge $E$. We work with a large mesh Peclet number, which means $h_E\gg \varepsilon$.

\noindent Given the discontinuous nature of the piecewise polynomial functions, let the sets of edges:

\begin{align*}
    &\mathcal{E}^{\mbox{\tiny int}}\! =\! \{ e\!=\!\partial\! K_1\! \cap\! \partial\! K_2 \colon \mu_{d-1}(\partial\! K_1 \!\cap\! \partial\! K_2)\!>\!0\}, \\
    &\mathcal{E}^{\mbox{\tiny ext}}\! =\! \{ e\!=\!\partial\! K\! \cap\! \Gamma \colon \mu_{d-1}(\partial\! K \!\cap\! \Gamma)>0\},\\
    &\mathcal{E}_N^{\mbox{\tiny ext}} =\! \{ e\!=\!\partial\! K\! \cap\! \Gamma_N \colon \mu_{d-1}(\partial\! K\! \cap\! \Gamma_N)\!>\!0\}, \\ &\mathcal{E}_D^{\mbox{\tiny ext}}\! =\! \{ e\!=\!\partial\! K\! \cap\! \Gamma_D \colon \mu_{d-1}(\!\partial\! K \!\cap\! \Gamma_D)\!>\!0\},\\
    &\mathcal{E} = \mathcal{E}^{\mbox{\tiny ext}}(\mathcal{T}_h) \cup \mathcal{E}^{\mbox{\tiny int}}(\mathcal{T}_h),\\
    &\mathcal{E}_N = \mathcal{E}_N^{\mbox{\tiny ext}}(\mathcal{T}_h) \cup \mathcal{E}^{\mbox{\tiny int}}(\mathcal{T}_h)\\
    &\mathcal{E}_D = \mathcal{E}_D^{\mbox{\tiny ext}}(\mathcal{T}_h) \cup \mathcal{E}^{\mbox{\tiny int}}(\mathcal{T}_h).
\end{align*}

\noindent We write $\textbf{n}_K$ the outward unit normal vector on the boundary $\partial K$ of an element $K$ and for a set of points $Y$ the set of elements $K$ containing $Y$: $\omega_Y = \underset{K \cap Y \neq \emptyset}{\cup} K$.

\noindent Let the inflow and outflow boundaries of $\partial \Omega$

\begin{center}
    $\Gamma_{\mbox{\tiny in}}^t = \{x \in \Gamma \colon \textbf{V}\cdot \textbf{n} < 0\}$, and $\Gamma_{\mbox{\tiny out}}^t = \{x \in \Gamma \colon \textbf{V}\cdot \textbf{n} \geq 0\}$ 
\end{center}

\noindent \eqref{ass:B2} implies that on the boundary $|\textbf{F}^{-T}\textbf{n}_r|\textbf{V}\cdot \textbf{n}=(\textbf{V}-\Tilde{\textbf{V}})\cdot \textbf{F}^{-T}\textbf{n}_r$.

\noindent Similarly, the inflow and outflow boundaries of an element K are defined by

\begin{center}
    $\partial K_{\mbox{\tiny in}}^t = \{x \in \partial K \colon (V-\Tilde{V})\cdot F^{-T}\textbf{n}_K < 0\}$, $\partial K_{\mbox{\tiny out}}^t = \{x \in \partial K \colon (V-\Tilde{V})\cdot F^{-T} \textbf{n}_K \geq 0\}$.
\end{center}

\noindent The broken Sobolev spaces and the space of piecewise polynomials associated with $\mathcal{T}_h$

\begin{equation*}
    H^k\!(\mathcal{T}_h) \! = \! \{\varphi \! \in \! L^2\! (\hat{\Omega}) \! \colon \! \forall K \! \in \! \mathcal{T}_h \; \varphi|_{K} \! \in \! H^k \! (K)\}, \; V_h \! = \! \{\varphi \! \in \! H^1 \! (\mathcal{T}_h) \! \colon \! \forall K \! \in \! \mathcal{T}_h \; \varphi|_{K} \! \in \! \mathcal{S}_p\! (K)\}
\end{equation*}

\noindent with $\mathcal{S}_p$ the space of polynomials of degree $\leq p$. We suppose $p \geq 1$.

\noindent Finally $U_h = V_h + H_D^1(\hat{\Omega})$ and $V_h^c= V_h \cap H_D^1(\hat{\Omega})$.

The jumps and averages of functions in $H^{1/2}(\mathcal{T}_h)$ are defined as follows. Let the edge $E\in \mathcal{E}^{\mbox{\tiny int}}$ be shared by two neighboring elements $K$ and $K^e$. For ${\hat{v} \in H^{1/2}(\mathcal{T}_h)}$, $\hat{v}|_{E}$ its trace on $E$ taken from inside $K$, and $\hat{v}^e|_{E}$ the one taken inside $K^e$. The jump of $\hat{v}$ and the average of a vector field $\textbf{q} \in H^{1/2}(\mathcal{T}_h)^d$ across the edge $E$ are then defined

\begin{equation*}
    [\![ \hat{v} ]\!] \! =\! \left \{ \begin{array}{ll} \hat{v}|_{E}\textbf{n}_{K} \! + \! \hat{v}^e|_{E}\textbf{n}_{K^e} & \mbox{for } E \! \in \! \mathcal{E}^{\mbox{\tiny int}} \\
    \hat{v}|_{E}\textbf{n}_{r} & \mbox{for } E \! \in \! \mathcal{E}^{\mbox{\tiny ext}}
    \end{array}
    \right.
    \mbox{ and }
    \{\!\{\textbf{q}\}\!\} \! = \! \left \{ \begin{array}{ll} \frac{1}{2} (\textbf{q}|_{E} \! + \! \textbf{q}^e|_{E}) & \mbox{for } E \! \in \! \mathcal{E}^{\mbox{\tiny int}} \\
    \textbf{q}|_{E} & \mbox{for } E \! \in \! \mathcal{E}^{\mbox{\tiny ext}}
    \end{array}
    \right.
\end{equation*}

\begin{remark}\label{rq:jump}
The definition of the jump holds for $u_D=0$, when we consider the general case we modify the definition of the jump: $[\![ \hat{v} ]\!] \! =\!(\hat{v}|_{E}-\hat{u}_D)\textbf{n}_{r}  \mbox{ for } E \! \in \! \mathcal{E}_D^{\mbox{\tiny ext}}$.
\end{remark}

Additionally we define $a_h( \cdot ,  \cdot )$ as

\begin{multline}\label{bilin:Ah}
    a_h(\hat{u},\hat{v})  = \underset{K \in \mathcal{T}_h}{\sum} \int_K J[\varepsilon (\textbf{F}^{-T}\nabla_X \hat{u})\cdot(\textbf{F}^{-T}\nabla_X \hat{v}) + (\textbf{V}-\tilde{\textbf{V}}) \cdot (\textbf{F}^{-T}\nabla_X \hat{u})\hat{v}]  \\
    - \underset{E\in \mathcal{E}_D}{\sum} \int_E J^\frac{1}{2}\varepsilon(\{\!\{\Pi_{L^2}(J^\frac{1}{2}\textbf{F}^{-T}\nabla_X\hat{u})\}\!\} \cdot \textbf{F}^{-T}[\![\hat{v}]\!] + \theta  \{\!\{\Pi_{L^2}(J^\frac{1}{2}\textbf{F}^{-T}\nabla_X\hat{v})\}\!\} \cdot \textbf{F}^{-T}[\![\hat{u}]\!] ) \\
    +  \frac{\alpha \varepsilon}{h_E} \int_E J \textbf{F}^{-T}[\![\hat{u}]\!] \cdot \textbf{F}^{-T}[\![\hat{v}]\!] - \underset{K \in \mathcal{T}_h}{\sum}\int_{\partial K_{\mbox{\tiny in}}^t} J(\textbf{V}-\tilde{\textbf{V}})\cdot (\textbf{F}^{-T}[\![\hat{u}]\!])\hat{v} 
\end{multline}

\noindent$\alpha>0$ is the interior penalty parameter that increases with the polynomial degree, let $\theta \in \{-1,1\}$, the method is called symmetric interior penalty (SIPG) when $\theta = 1$ and nonsymmetric interior penalty (NIPG) when $\theta = -1$. $\Pi_{L^2}$ denotes the elementwise orthogonal $L^2$-projection onto the finite element space $(V_h)^2$, it has the property

\begin{equation} \label{ppt:L2}
    \forall \hat{g}\in (L^2(\hat{\Omega}))^2, \; \Vert \Pi_{L^2} \hat{g} \Vert_{L^2(\hat{\Omega})^2} \leq \Vert \hat{g} \Vert_{L^2(\hat{\Omega})^2} 
\end{equation}

\noindent The DG method is based on an upwind discretisation for the convective term and a (non-)symmetric interior penalty discretisation for the Laplacian, it is formulated as

\begin{multline}\label{weakform:discr}
\mbox{Find } \hat{u}_h \! \in \! C^1\! (0,T;V_h) \mbox{ s.t. } \forall t \! \in \! [0,T], \forall \hat{v}_h \! \in \! V_h \; \int_{\hat{\Omega}} \! J\frac{\partial \hat{u}_h}{\partial t}\hat{v}_h\! =\! l_h(\hat{v}_h)\! -\! a_h(\hat{u}_h,\! \hat{v}_h) \\
\mbox{where } \hat{u}_h(0, \cdot ) \in V_h \mbox{ is a projection of } \hat{u}_0( \cdot ) \mbox{ onto $V_h$.} 
\end{multline}

\begin{multline*}
\mbox{With } l_h(\hat{v})  = \underset{K \in \mathcal{T}_h}{\sum} \int_{K} J \hat{f} \hat{v} +  \underset{E \in \mathcal{E}_N^{\mbox{\tiny ext}}}{\sum}\int_{E} J \hat{u}_N \hat{v} |\textbf{F}^{-T}\textbf{n}_r|.\\
\end{multline*}

Finally, for $\hat{u},\hat{v} \in H^1(\mathcal{T}_h)$

\begin{align*}
    d_h(\hat{u},\hat{v}) = & \! \underset{K \in \mathcal{T}_h}{\sum} \int_K J\varepsilon (\textbf{F}^{-T}\nabla_X \hat{u})\cdot(\textbf{F}^{-T}\nabla_X \hat{v}) - \hat{u} \hat{v} \nabla_x \cdot (\textbf{V}-\Tilde{\textbf{V}}) \\
    f_h(\hat{u},\hat{v}) = & - \underset{K \in \mathcal{T}_h}{\sum} \int_K J(\textbf{V}-\tilde{\textbf{V}}) \cdot (\textbf{F}^{-T}\nabla_X \hat{v})\hat{u}  + \underset{K \in \mathcal{T}_h}{\sum}\int_{\partial K_{\mbox{\tiny out}}^t} J(\textbf{V}-\tilde{\textbf{V}}) \cdot \textbf{F}^{-T} [\![ \hat{v} ]\!] \hat{u} \nonumber\\
    j_h(\hat{u},\hat{v}) = & \! \underset{E\in \mathcal{E}_D}{\sum} \int_E \frac{\alpha}{h_E} J \textbf{F}^{-T}[\![\hat{u}]\!] \cdot \textbf{F}^{-T}[\![\hat{v}]\!] \nonumber\\
    \tilde{a}_h(\hat{u},\hat{v}) = & d_h(\hat{u},\hat{v})+f_h(\hat{u},\hat{v})+  \varepsilon j_h(\hat{u},\hat{v}) \nonumber\\
    \tilde{p}_h(\hat{u},\hat{v}) = &  \! \underset{E\in \mathcal{E}_D}{\sum} \! \int_E \! J^\frac{1}{2} \! \varepsilon ( \! \{\!\{\Pi_{L^2} ( J^\frac{1}{2} \! \textbf{F}^{-T} \nabla\! _X \hat{u}) \}\!\} \cdot  \textbf{F}^{-T} \! [\![ \hat{v} ]\!] \! + \! \theta  \! \{\!\{ \Pi_{L^2} ( J^\frac{1}{2} \textbf{F}^{-T} \! \nabla\! _X \hat{v}) \}\!\} \cdot \textbf{F}^{-T} \! [\![ \hat{u} ]\!]) \nonumber\\
\end{align*}

Integration by parts then gives:

\begin{equation}\label{bilin:Ahint}
    a_h(\hat{u},\hat{v}) = \tilde{a}_h(\hat{u},\hat{v})-\tilde{p}_h(\hat{u},\hat{v}).
\end{equation}

And we notice that 

\begin{equation}\label{bilin:AhAhtilde}
    \forall \hat{v} \in H^1_D(\hat{\Omega}) \; a_h(\hat{v},\hat{v}) = \tilde{a}_h(\hat{v},\hat{v}).
\end{equation}

\subsection{Properties of the bilinear operator}\label{ssec:prop}

\noindent In this subsection we state some properties of the operators. The coercivity lemma proves the stability of the method. Convergence cannot be shown directly since the operator is inconsistent, but the inconsistency lemma will be useful in the proof of the \textit{a priori} error theorem \ref{thm:apriori}. The convergence of the method is then subject to the conditions described in remark \ref{rq:velocity}. The continuity and inf-sup condition lemmas show well-posedness and will be useful for the development of \textit{a posteriori} error estimates. Before the development of these properties, we will recall some classical tools for studying finite element discretisations as well as the norms and semi-norms used for analysing the semi-discretisation.

\noindent By (3.41) in \cite{feistauer07} there exist the following inverse and trace inequalities that are independent of time.

\begin{align}
    \lVert \hat{v}_h\rVert _{\partial K}^2 & \leq C_T h_K^{-1} \lVert \hat{v}_h \rVert _K^2 \label{ppt:ite}\\
    \lVert \nabla_X \hat{v}_h \rVert _{K} & \leq C_I h_K^{-1} \lVert \hat{v}_h \rVert _K \label{ppt:ie}
\end{align}

\noindent $C_T>0$ and $C_I>0$ depend on the polynomial degree.

\noindent To analyse the spatial error in \eqref{norm:A} and in the criteria \eqref{west}, let $\hat{g}_q^{\hat{\omega}}=|\hat{\omega}|^{-\frac{1}{q}} \lVert \hat{g} \rVert_{L^q(\hat{\omega})}$ for $\hat{\omega} \subset \hat{\Omega}$, $q\in [1,\infty]$ and $\hat{g}\in L^p(\hat{\Omega})$ with $|\hat{\omega}|$ the Lebesgue measure of $\hat{\omega}$. That is the $L^q$-norm of $g$ normalized by the $L^q$-norm of the indicator function of $\hat{\omega}$. And we write for all $(t,X)$

\begin{equation}\label{elts}
    \delta \! = \! (\textbf{V}-\Tilde{\textbf{V}})^2, \, M \! =\! \frac{1}{J} \mbox{ and } a \mbox{ the pointwise maximal eigenvalue of } J\textbf{F}^{-1}\textbf{F}^{-T}
\end{equation}

\noindent In what follows we will often use that since $\det (J\textbf{F}^{-1}\textbf{F}^{-T})=1$ and $d=2$, the maximal eigenvalue of  $J\textbf{F}^{-1}\textbf{F}^{-T}$ is the same as the maximal eigenvalue of $(J\textbf{F}^{-1}\textbf{F}^{-T})^{-1}$.

\noindent We define the norms and semi-norm

\begin{align}
    \forall \hat{v}\in H^1(\mathcal{T}_h), \;  |||\hat{v} |||^2 &= \underset{K \in \mathcal{T}_h}{\sum}[  \varepsilon |\hat{v}|_{U_K}^2 + \beta \lVert \hat{v}\rVert ^2_{H_K}] +  \varepsilon j_h(\hat{v},\hat{v}) \label{norm:nrj}\\
    \forall \textbf{q}\in H^0(\hat{\Omega})^2,\; |\textbf{q}|_{*} &= \underset{\hat{v}\in H^1_D(\hat{\Omega})-\{0\}}{\mbox{sup}} \frac{1}{ |||\hat{v} |||}\int_{\hat{\Omega}} J\textbf{q}\cdot \textbf{F}^{-T}\nabla_X \hat{v} \label{norm:helm}\\
    \forall \hat{v} \in H^1(\mathcal{T}_h), \; |\hat{v}|^2_{A} &= | (\textbf{V}-\Tilde{\textbf{V}}) \hat{v}|^2_{*} + \underset{E\in \mathcal{E}_D}{\sum}(\beta + \frac{\delta_\infty^{\omega_E}}{\varepsilon}) h_E J_1^{\omega_E} \int_E [\![ \hat{v} ]\!]^2 \label{norm:A}
\end{align}

\noindent The first norm is the DG-norm associated with the discretisation of \eqref{eq:mmform}. The seminorm $|\cdot|_{*}$ is linked to the Helmholtz decomposition, in particular $\textbf{q}$ is called divergence-free when $|\textbf{q}|_{*} =0$ holds. The third norm measures the error of the transport behaviour. Since $J>0$, these norms and seminorms are well defined for all $t$.


\noindent Finally we define the notation $\lesssim$ as a bound that is valid up to a constant that is independent of the local mesh size $h$, diffusion coefficient $\varepsilon$, speed $\delta$ and time $t$

\begin{lemma}\label{lem:coercivity}
(Coercivity) For $\alpha$ large enough (depending on the value of the scalar $C_T$ in the inverse trace inequality \eqref{ppt:ite}) $a_h$ is $ |||\cdot |||$-coercive. For all $\hat{v}_h \in U_h$

\begin{equation}
     |||\hat{v}_h |||^2 \lesssim a_h(\hat{v}_h,\hat{v}_h) 
\end{equation}

\end{lemma}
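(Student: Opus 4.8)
The plan is to start from the integration-by-parts identity \eqref{bilin:Ahint}, $a_h(\hat{v}_h,\hat{v}_h)=\tilde{a}_h(\hat{v}_h,\hat{v}_h)-\tilde{p}_h(\hat{v}_h,\hat{v}_h)$, to show that the broken form $\tilde{a}_h$ is already coercive with constant one, and then to absorb the remaining penalty--consistency contribution $\tilde{p}_h$ into the diffusion and jump parts of the energy norm by choosing $\alpha$ large. Throughout I fix $\hat{v}_h\in U_h$ and write $\textbf{b}=\textbf{V}-\tilde{\textbf{V}}$.

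For the coercivity of $\tilde{a}_h=d_h+f_h+\varepsilon j_h$ I would argue term by term. The diffusion part is exact: by the definition of $|\cdot|_{U_K}$ and, for the zero-order part, by \eqref{ass:B4} (which gives $-\nabla_x\cdot\textbf{b}=2\beta$), one has $d_h(\hat{v}_h,\hat{v}_h)=\sum_K\bigl(\varepsilon|\hat{v}_h|_{U_K}^2+2\beta\lVert\hat{v}_h\rVert_{H_K}^2\bigr)$. For the convective part $f_h$ I would pass to the physical variables elementwise (Remark \ref{rq:normequivalence}) and integrate by parts on each $K$: the volume contribution leaves $-\beta\lVert\hat{v}_h\rVert_{H}^2$ (again by \eqref{ass:B4}), while the boundary contributions, reassembled edge by edge with the upwind flux, produce $\tfrac12\sum_{E\in\mathcal{E}^{\mathrm{int}}}\int_E|\textbf{b}\cdot\textbf{n}|\,[\![\hat{v}_h]\!]^2\ge 0$ on interior edges, a nonnegative flux term on $\Gamma_N$ (there $\textbf{b}\cdot\textbf{n}=\textbf{V}\cdot\textbf{n}\ge 0$ by \eqref{ass:B2} and \eqref{ass:A4}) and a nonnegative term on $\Gamma_D$ (the inflow datum vanishes since $u_D=0$). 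Hence $f_h(\hat{v}_h,\hat{v}_h)\ge-\beta\sum_K\lVert\hat{v}_h\rVert_{H_K}^2$, and combining with $d_h$ and with $\varepsilon j_h(\hat{v}_h,\hat{v}_h)\ge 0$ gives
\begin{equation*}
\tilde{a}_h(\hat{v}_h,\hat{v}_h)\ \ge\ \sum_K\bigl(\varepsilon|\hat{v}_h|_{U_K}^2+\beta\lVert\hat{v}_h\rVert_{H_K}^2\bigr)+\varepsilon j_h(\hat{v}_h,\hat{v}_h)\ =\ |||\hat{v}_h|||^2 .
\end{equation*}

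It then remains to control $\tilde{p}_h(\hat{v}_h,\hat{v}_h)=(1+\theta)\sum_{E\in\mathcal{E}_D}\int_E J^{1/2}\varepsilon\,\{\!\{\Pi_{L^2}(J^{1/2}\textbf{F}^{-T}\nabla_X\hat{v}_h)\}\!\}\cdot\textbf{F}^{-T}[\![\hat{v}_h]\!]$. In the NIPG case $\theta=-1$ this term is identically zero, so $a_h(\hat{v}_h,\hat{v}_h)\ge|||\hat{v}_h|||^2$ for any $\alpha>0$. In the SIPG case $\theta=1$, I would apply Cauchy--Schwarz on each edge and then Young's inequality to split $\tilde{p}_h(\hat{v}_h,\hat{v}_h)$ into a term bounded by $\tfrac12\varepsilon j_h(\hat{v}_h,\hat{v}_h)$ plus a remainder of the form $C\sum_E\tfrac{h_E}{\alpha}\varepsilon\int_E|\{\!\{\Pi_{L^2}(J^{1/2}\textbf{F}^{-T}\nabla_X\hat{v}_h)\}\!\}|^2$ (with the $J$- and $\textbf{F}$-weights tracked so that they match those of the norms). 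Since $\Pi_{L^2}(J^{1/2}\textbf{F}^{-T}\nabla_X\hat{v}_h)$ is elementwise polynomial, the inverse trace inequality \eqref{ppt:ite} controls its $L^2(\partial K)$-norm by $C_T h_K^{-1}$ times its $L^2(K)$-norm, which by \eqref{ppt:L2} is at most $\lVert J^{1/2}\textbf{F}^{-T}\nabla_X\hat{v}_h\rVert_{L^2(K)}^2=|\hat{v}_h|_{U_K}^2$; with $h_E\approx h_K$ this bounds the remainder by $\lesssim\tfrac{C_T}{\alpha}\sum_K\varepsilon|\hat{v}_h|_{U_K}^2$. Choosing $\alpha$ large enough in terms of $C_T$ then gives $\tilde{p}_h(\hat{v}_h,\hat{v}_h)\le\tfrac12\sum_K\varepsilon|\hat{v}_h|_{U_K}^2+\tfrac12\varepsilon j_h(\hat{v}_h,\hat{v}_h)$, hence $a_h(\hat{v}_h,\hat{v}_h)\ge\tfrac12|||\hat{v}_h|||^2$, which is the claim.

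The step I expect to be the main obstacle is precisely the SIPG estimate of $\tilde{p}_h$: the elementwise $L^2$-projection is essential, because $\textbf{F}^{-T}\nabla_X\hat{v}_h$ is not a polynomial and \eqref{ppt:ite} cannot be applied to it directly, and one must keep careful account of the $J$- and $\textbf{F}$-weights on the edges so that the constant finally obtained is genuinely independent of $\varepsilon$, $h$, $\delta$ and $t$ — the compatibility of the weighted quantities $|\cdot|_{U_K}$, $\lVert\cdot\rVert_{H_K}$, $j_h$ and the weights appearing in $\tilde{p}_h$ is exactly what makes this bookkeeping close. The remaining ingredients are the classical upwind and interior-penalty computations.
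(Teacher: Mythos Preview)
Your proof is correct and follows essentially the same route as the paper's. The only cosmetic difference is that the paper symmetrises the convection term by writing $a_h=\tfrac12\bigl(\text{\eqref{bilin:Ah}}+\text{\eqref{bilin:Ahint}}\bigr)$ rather than integrating $f_h$ by parts element by element as you do, and then absorbs $\tilde p_h$ in the SIPG case via exactly the same chain (Cauchy--Schwarz, Young, \eqref{ppt:ite}, \eqref{ppt:L2}) that you outline.
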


\begin{proof}
Let $\hat{v}_h \in V_h$, by noting that $a_h = \frac{1}{2} ( \mbox{ \eqref{bilin:Ah} } + \mbox{ \eqref{bilin:Ahint} } )$:

\begin{equation*}
    a_h(\hat{v}_h,\hat{v}_h) \geq  \frac{\beta}{2} \underset{K \in \mathcal{T}_h}{\sum}  ||\hat{v}_h ||_{H_K}^2 + \varepsilon (j_h(\hat{v} _h,\hat{v}_h) + \underset{K \in \mathcal{T}_h}{\sum} |\hat{v}_h|_{U_K}^2) - \Tilde{p}_h(\hat{v}_h,\hat{v}_h)
\end{equation*}

\noindent If $\theta = -1$, then $\Tilde{p}_h(\hat{v}_h,\hat{v}_h)=0$, thus $a_h(\hat{v}_h,\hat{v}_h)  \geq \frac{1}{2}  |||\hat{v}_h |||^2$. 

\noindent When $\theta=1$, using successively Young's inequality, \eqref{ppt:ite}, \eqref{ppt:L2} and \eqref{ppt:ie}, we obtain

\begin{equation*}
2\underset{E\in \mathcal{E}_D}{\sum} \int_E \{\!\{\Pi_{L^2} ( J^\frac{1}{2}\textbf{F}^{-T} \nabla \hat{v}_h) \}\!\} \cdot J^\frac{1}{2}\textbf{F}^{-T}[\![ \hat{v}_h ]\!]  \leq \sqrt{\frac{C_T}{\alpha}}(j_h(\hat{v} _h,\hat{v}_h) + \underset{K \in \mathcal{T}_h}{\sum} |\hat{v}_h|_{U_K}^2).
\end{equation*}


\begin{align*}
& \mbox{Thus } |||\hat{v}_h |||^2 \leq \max {(2,(1-\sqrt{\frac{C_T}{\alpha}})^{-1})}  a_h(\hat{v}_h,\hat{v}_h), \mbox{ that is bounded for } \alpha >C_T.
\end{align*}
\end{proof}

As often used in the literature, we will suppose that $\alpha \geq 2C_T$.

\begin{lemma} \label{lem:inconsistency}
(Inconsistency) Let $\hat{u}$ be a solution of \eqref{weakform:ctn}, let $t\in [0,T]$ such that \eqref{eq:mmform} is resolved analytically on $\hat{\Omega}$ by $\hat{u}(t,\cdot)$, let $\hat{u}_h$ solve \eqref{weakform:discr} and $\hat{e}=\hat{u}-\hat{u}_h$. Then $\forall \hat{v}_h \in V_h$

\begin{equation*}
    \int_{\hat{\Omega}} J\frac{\partial \hat{e}}{\partial t}\hat{v}_h+a_h(\hat{e},\hat{v}_h)\! =\! \varepsilon \underset{E\in \mathcal{E}_D}{\sum} \int_E \{\!\{ J^\frac{1}{2}\textbf{F}^{-T} \nabla_X \hat{u} - \Pi_{L^2} ( J^\frac{1}{2}\textbf{F}^{-T} \nabla_X \hat{u}) \}\!\} \cdot J^\frac{1}{2}\textbf{F}^{-T} [\![ \hat{v}_h]\!]
\end{equation*}

\end{lemma}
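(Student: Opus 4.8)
The plan is to derive the identity by testing the strong equation \eqref{eq:mmform} against discrete functions, element by element, and then subtracting the discrete scheme \eqref{weakform:discr}. Fix a time $t$ at which $\hat{u}(t,\cdot)$ solves \eqref{eq:mmform} classically, and take $\hat{v}_h\in V_h$. Multiplying \eqref{eq:mmform} by $J\hat{v}_h$, integrating over each $K\in\mathcal{T}_h$ and summing gives
\begin{equation*}
\sum_{K\in\mathcal{T}_h}\int_K J\frac{\partial\hat{u}}{\partial t}\hat{v}_h+\sum_{K\in\mathcal{T}_h}\int_K J(\textbf{V}-\tilde{\textbf{V}})\cdot(\textbf{F}^{-T}\nabla_X\hat{u})\hat{v}_h-\varepsilon\sum_{K\in\mathcal{T}_h}\int_K\big(\nabla_X\cdot\{J\textbf{F}^{-1}\textbf{F}^{-T}\nabla_X\hat{u}\}\big)\hat{v}_h=\sum_{K\in\mathcal{T}_h}\int_K J\hat{f}\hat{v}_h.
\end{equation*}
First I would integrate the diffusive term by parts on each $K$; with the algebraic identity $(J\textbf{F}^{-1}\textbf{F}^{-T}\textbf{a})\cdot\textbf{b}=J(\textbf{F}^{-T}\textbf{a})\cdot(\textbf{F}^{-T}\textbf{b})$ this produces the volume term $\varepsilon\int_K J(\textbf{F}^{-T}\nabla_X\hat{u})\cdot(\textbf{F}^{-T}\nabla_X\hat{v}_h)$ together with the element-boundary term $-\varepsilon\int_{\partial K}\big(J\textbf{F}^{-1}\textbf{F}^{-T}\nabla_X\hat{u}\big)\cdot\textbf{n}_K\,\hat{v}_h$. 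The convective volume term already coincides with the one appearing in $a_h$.

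Next I would collapse $\sum_{K}\int_{\partial K}(J\textbf{F}^{-1}\textbf{F}^{-T}\nabla_X\hat{u})\cdot\textbf{n}_K\,\hat{v}_h$ into edge integrals. Since $\hat{u}(t,\cdot)$ resolves \eqref{eq:mmform} analytically and $J,\textbf{F}$ are continuous ($\chi$ being a $C^2$-diffeomorphism), this flux has a single-valued normal trace across interior edges, so the standard DG summation identity reduces that element-boundary sum to $\sum_{E\in\mathcal{E}^{\mathrm{int}}}\int_E (J\textbf{F}^{-1}\textbf{F}^{-T}\nabla_X\hat{u})\cdot[\![\hat{v}_h]\!]$ plus exterior-boundary contributions. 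On $\mathcal{E}_N^{\mathrm{ext}}$ the Neumann condition, rewritten with the normal-vector relation $\textbf{n}=\textbf{F}^{-T}\textbf{n}_r/|\textbf{F}^{-T}\textbf{n}_r|$ as $\varepsilon(\textbf{F}^{-T}\nabla_X\hat{u})\cdot(\textbf{F}^{-T}\textbf{n}_r)=\hat{u}_N|\textbf{F}^{-T}\textbf{n}_r|$, turns the $\Gamma_N$ part into $-\sum_{E\in\mathcal{E}_N^{\mathrm{ext}}}\int_E J\hat{u}_N|\textbf{F}^{-T}\textbf{n}_r|\hat{v}_h$, which moved to the right-hand side reconstitutes exactly $l_h(\hat{v}_h)$. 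Gathering the $\mathcal{E}^{\mathrm{int}}$ and $\mathcal{E}_D^{\mathrm{ext}}$ contributions into a single sum over $\mathcal{E}_D$ (on exterior edges $[\![\hat{v}_h]\!]=\hat{v}_h\textbf{n}_r$), and rewriting $(J\textbf{F}^{-1}\textbf{F}^{-T}\nabla_X\hat{u})\cdot[\![\hat{v}_h]\!]=J^{1/2}\{\!\{J^{1/2}\textbf{F}^{-T}\nabla_X\hat{u}\}\!\}\cdot\textbf{F}^{-T}[\![\hat{v}_h]\!]$ (valid because for the continuous exact solution the average equals the value), the left-hand side of the tested equation becomes $\int_{\hat\Omega}J\frac{\partial\hat{u}}{\partial t}\hat{v}_h$ plus a form which is \eqref{bilin:Ah} evaluated at $(\hat{u},\hat{v}_h)$ with $\Pi_{L^2}$ deleted from the diffusive flux term and with the symmetrising $\theta$-term, the penalty term and the upwind jump term dropped.

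Those three dropped terms of \eqref{bilin:Ah} each carry the factor $[\![\hat{u}]\!]$, which vanishes identically — across interior edges by continuity of $\hat{u}$, and on $\mathcal{E}_D^{\mathrm{ext}}$ because $u_D=0$ forces $\hat{u}=0$ on $\Gamma_D$ (the case $u_D\neq0$ being treated by remark \ref{rq:jump}). Hence $a_h(\hat{u},\hat{v}_h)$ differs from the form obtained above only through the presence of $\Pi_{L^2}$ in the flux term. Subtracting the discrete equation \eqref{weakform:discr}, the $l_h(\hat{v}_h)$ contributions cancel, the two time-derivative terms merge into $\int_{\hat\Omega}J\frac{\partial\hat{e}}{\partial t}\hat{v}_h$, and rearranging with the bilinearity of $a_h$ leaves
\begin{equation*}
\int_{\hat\Omega}J\frac{\partial\hat{e}}{\partial t}\hat{v}_h+a_h(\hat{e},\hat{v}_h)=\varepsilon\sum_{E\in\mathcal{E}_D}\int_E J^{1/2}\{\!\{J^{1/2}\textbf{F}^{-T}\nabla_X\hat{u}-\Pi_{L^2}(J^{1/2}\textbf{F}^{-T}\nabla_X\hat{u})\}\!\}\cdot\textbf{F}^{-T}[\![\hat{v}_h]\!],
\end{equation*}
and pulling the continuous scalar $J^{1/2}$ inside the average gives precisely the stated right-hand side.

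The elementwise integration by parts and the DG edge bookkeeping are routine. The delicate step is matching the $J^{1/2}$, $\textbf{F}^{-T}$, $\Pi_{L^2}$ and average/jump conventions of \eqref{bilin:Ah} so that the leftover term reproduces the claim with no spurious sign or weight; this matching relies crucially on the diffusive flux having a continuous normal trace across interior edges, which is exactly where the hypothesis that $\hat{u}(t,\cdot)$ solves \eqref{eq:mmform} analytically is used.
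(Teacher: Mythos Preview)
Your proof is correct and follows essentially the same route as the paper: both arguments hinge on integrating the diffusive term by parts elementwise, collapsing the element-boundary sum into edge integrals, using $[\![\hat{u}]\!]=0$ to kill the penalty, symmetrising and upwind-jump terms of $a_h$, and identifying the leftover as the difference between the true flux $J^{1/2}\textbf{F}^{-T}\nabla_X\hat{u}$ and its $\Pi_{L^2}$-projection. The only cosmetic difference is that you start from the strong equation \eqref{eq:mmform} and build up toward $a_h$, whereas the paper starts from $a_h(\hat{u},\hat{v}_h)$ and integrates back to the strong form; your edge bookkeeping (Neumann boundary, interior-edge collapse) is spelled out more explicitly than in the paper's compressed version.
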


\begin{proof}

Let $ \hat{v}_h \in V_h $, with \mbox{ \eqref{bilin:Ah} }, $\hat{u}(t,\cdot ) \in H^1(\hat{\Omega})$ and integration by parts yields:

\begin{align*}
     \int_{\hat{\Omega}} & J\frac{\partial \hat{u}}{\partial t}\hat{v}_h \! + \! a_h(\hat{u},\hat{v}_h) 
    \!= \! \underset{K \in \mathcal{T}_h}{\sum} \int_K (J(\textbf{V}-\Tilde{\textbf{V}}) \cdot  \textbf{F}^{-T} \nabla_X \hat{u} - \varepsilon \nabla_X \cdot \{ J\textbf{F}^{-1}\textbf{F}^{-T} \nabla_X \hat{u}\}) \hat{v}_h \\
    &+ \varepsilon \int_{\partial K} J\textbf{F}^{-1}\textbf{F}^{-T} \nabla_X \hat{u} \cdot \textbf{n}_r \hat{v}_h - \varepsilon \underset{E\in \mathcal{E}_D}{\sum} \int_E \{\!\{\Pi_{L^2} ( J^\frac{1}{2}\textbf{F}^{-T} \nabla_X \hat{u}) \}\!\} \cdot J^\frac{1}{2}\textbf{F}^{-T} [\![ \hat{v}_h ]\!] \\
    = \! & \int_{\hat{\Omega}}\! J\frac{\partial \hat{u}_h}{\partial t}\hat{v}_h\!+\!a_h(\hat{u}_h,\!\hat{v}_h)\!+\!  \varepsilon\! \underset{E\in \mathcal{E}_D}{\sum} \int_E \{\!\{ \! J^\frac{1}{2} \! \textbf{F}^{-T} \! \nabla_X \hat{u}\! -\! \Pi_{L^2} \! ( J^\frac{1}{2} \! \textbf{F}^{-T} \! \nabla_X \hat{u})\! \}\!\}\! \cdot\! J^\frac{1}{2}\! \textbf{F}^{-T} [\![ \hat{v}_h]\!]
\end{align*}

\noindent The last equality follows from \eqref{eq:mmform} and \eqref{weakform:discr}.
\end{proof}

This lemma establishes an inconsistency since the right hand-side is in general non-zero. This deviation from the continuous problem is the result of the projection of the fluxes of the interior penalty formulation that is necessary for the stability of the method. This deviation goes to $0$ when $h$ is small or $p$ is large.

\begin{lemma} \label{lem:continuity}
(Continuity) For  $\hat{v} \in H^1_D(\hat{\Omega})$, $\hat{w}_1 ,\hat{w}_2 \in U_h$, the following properties hold:

\begin{align}
    \vert d_h(\hat{w}_1 ,\hat{w}_2)\vert  & \leq  \max (1,c_*)|||\hat{w}_1 |||\cdot  |||\hat{w}_2 |||, \label{ineq:dh}\\
    \vert j_h(\hat{w}_1 ,\hat{w}_2 )\vert  & \leq  |||\hat{w}_1 |||\cdot  |||\hat{w}_2 |||,\label{ineq:jh} \\
     \vert f_h(\hat{w}_1 ,\hat{v})\vert  & \leq \vert (\textbf{V}-\Tilde{\textbf{V}})\hat{w}_1\vert _*\cdot  |||\hat{v} |||. \label{ineq:fh}
\end{align}

\end{lemma}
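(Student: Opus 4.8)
The plan is to prove the three estimates one at a time, in each case rewriting the form so that it becomes a pairing already controlled by the norms $|||\cdot|||$ or $|\cdot|_*$, and then to close with Cauchy--Schwarz. For \eqref{ineq:jh} the argument is immediate: the integrand of $j_h$ is $\frac{\alpha}{h_E}J\,(\textbf{F}^{-T}[\![\hat{w}_1]\!])\cdot(\textbf{F}^{-T}[\![\hat{w}_2]\!])$, and since $J>0$ and $\textbf{F}^{-1}\textbf{F}^{-T}$ is positive definite, $j_h$ is a symmetric positive semidefinite bilinear form, so $|j_h(\hat{w}_1,\hat{w}_2)|\le j_h(\hat{w}_1,\hat{w}_1)^{1/2}j_h(\hat{w}_2,\hat{w}_2)^{1/2}$, and the jump term is controlled by $|||\cdot|||$ through the definition \eqref{norm:nrj}.

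For \eqref{ineq:dh} I would split $d_h(\hat{w}_1,\hat{w}_2)$ into its diffusive part $\varepsilon\sum_{K}\int_K J(\textbf{F}^{-T}\nabla_X\hat{w}_1)\cdot(\textbf{F}^{-T}\nabla_X\hat{w}_2)$ and its zeroth-order part $-\sum_K\int_K J\,\hat{w}_1\hat{w}_2\,\nabla_x\cdot(\textbf{V}-\tilde{\textbf{V}})$. On each element the diffusive integrand is the inner product attached to the positive definite matrix $J\textbf{F}^{-1}\textbf{F}^{-T}$, so Cauchy--Schwarz bounds it by $\varepsilon\sum_K|\hat{w}_1|_{U_K}|\hat{w}_2|_{U_K}$; the zeroth-order part is bounded, using \eqref{ass:B5}, by $c_*\beta\sum_K\lVert\hat{w}_1\rVert_{H_K}\lVert\hat{w}_2\rVert_{H_K}$. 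Distributing the powers $\varepsilon^{1/2}$ and $\beta^{1/2}$ onto each argument, pulling $\max(1,c_*)$ in front, and applying the discrete Cauchy--Schwarz inequality first within each element (on the two-component vector $(\varepsilon^{1/2}|\cdot|_{U_K},\beta^{1/2}\lVert\cdot\rVert_{H_K})$) and then over the elements, bounds $|d_h(\hat{w}_1,\hat{w}_2)|$ by $\max(1,c_*)$ times the product of the two quantities $(\sum_K[\varepsilon|\cdot|_{U_K}^2+\beta\lVert\cdot\rVert_{H_K}^2])^{1/2}$, each of which is at most $|||\cdot|||$ again by \eqref{norm:nrj}.

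For \eqref{ineq:fh} the essential point is that $\hat{v}\in H^1_D(\hat{\Omega})$ makes $[\![\hat{v}]\!]$ vanish on every interior edge and on every edge of $\mathcal{E}_D^{\mathrm{ext}}$, so the outflow term of $f_h$ collapses to a contribution carried by $\Gamma_N$ only; invoking the boundary identity $|\textbf{F}^{-T}\textbf{n}_r|\,\textbf{V}\cdot\textbf{n}=(\textbf{V}-\tilde{\textbf{V}})\cdot\textbf{F}^{-T}\textbf{n}_r$ together with \eqref{ass:A4} (which places all of $\Gamma_N$ in the outflow part $\partial K_{\mathrm{out}}^t$), that remaining boundary piece is recombined with the volume term so as to recast $f_h(\hat{w}_1,\hat{v})$ as the pairing $-\int_{\hat{\Omega}}J\,((\textbf{V}-\tilde{\textbf{V}})\hat{w}_1)\cdot\textbf{F}^{-T}\nabla_X\hat{v}$. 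This is exactly the integral whose supremum over $\hat{v}\in H^1_D(\hat{\Omega})\setminus\{0\}$, normalised by $|||\hat{v}|||$, defines $|(\textbf{V}-\tilde{\textbf{V}})\hat{w}_1|_*$ in \eqref{norm:helm}, so dividing by $|||\hat{v}|||$ gives the claim at once.

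I expect this last reduction to be the main obstacle: one must keep careful account of how the upwind boundary term $\sum_K\int_{\partial K_{\mathrm{out}}^t}$ behaves when tested against $\hat{v}\in H^1_D(\hat{\Omega})$ --- the interior-edge contributions cancel only after using both the continuity of $\hat{v}$ across edges and the orientation of $\partial K_{\mathrm{out}}^t$, while the $\Gamma_N$ contribution does not vanish and must be handled through \eqref{ass:A4} so that nothing escapes the purely volumetric seminorm $|\cdot|_*$. The other two bounds are, by contrast, routine once the positive-definiteness of $J\textbf{F}^{-1}\textbf{F}^{-T}$ and the divergence bound \eqref{ass:B5} are in hand; in particular \eqref{ineq:dh} is where the constant $\max(1,c_*)$ enters, through the combination of the $\varepsilon$-weighted diffusive pairing (constant $1$) and the $\beta$-weighted reaction pairing (constant $c_*$).
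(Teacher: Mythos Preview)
Your approach matches the paper's: Cauchy--Schwarz together with \eqref{ass:B5} for \eqref{ineq:dh} and \eqref{ineq:jh}, and for \eqref{ineq:fh} the reduction of $f_h(\hat w_1,\hat v)$ to the single volume pairing $-\int_{\hat\Omega}J\,(\textbf{V}-\tilde{\textbf{V}})\hat w_1\cdot\textbf{F}^{-T}\nabla_X\hat v$ followed by the definition \eqref{norm:helm} of $|\cdot|_*$. The paper's argument for \eqref{ineq:fh} is a one-line inequality with no discussion of the boundary contributions, so your account of why the interior and Dirichlet jumps of $\hat v$ vanish and your flagging of the $\Gamma_N$ piece go beyond what the paper writes out---though your ``recombination'' of that $\Gamma_N$ term is left just as implicit as the paper's silent passage over it.
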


\begin{proof}

\eqref{ineq:dh} and \eqref{ineq:jh} follow from the Cauchy-Schwarz inequality and \eqref{ass:B5}.

\begin{align*}
    \vert f_h(\hat{w}_1 ,\hat{v})\vert  &\leq \vert \underset{K \in \mathcal{T}_h}{\sum} \int_K \hat{w}_1J(\textbf{V}-\Tilde{\textbf{V}})\cdot \textbf{F}^{-T}\nabla_X \hat{v}\vert  \leq  |||\hat{v} |||\frac{\vert \int_{\hat{\Omega}} \hat{w}_1J(\textbf{V}-\Tilde{\textbf{V}})\cdot \textbf{F}^{-T}\nabla_X \hat{v}\vert }{ |||\hat{v} |||} \\
    &\leq \vert (\textbf{V}-\Tilde{\textbf{V}})\hat{w}_1\vert _*\cdot  |||\hat{v} |||
\end{align*}

\end{proof}

\begin{lemma} \label{lem:infsup} (Inf-Sup) $\tilde{a}_h$ satisfies the following condition:

\begin{equation*}
    \underset{\hat{v} \in H_D^1(\hat{\Omega})-\{0\}}{\inf} \underset{\hat{w}\in H_D^1(\hat{\Omega})-\{0\}}{\sup} \frac{\tilde{a}_h(\hat{v},\hat{w})}{( |||\hat{v} |||+|(\textbf{V}-\Tilde{\textbf{V}})\hat{w}|_*)\cdot  |||\hat{w} |||} \geq C > 0
\end{equation*}

\end{lemma}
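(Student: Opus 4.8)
The plan is to prove the inf–sup condition by the classical "coercivity plus convective corrector'' construction of the test function. Fix $\hat v\in H^1_D(\hat{\Omega})-\{0\}$ and put $s=|(\textbf{V}-\tilde{\textbf{V}})\hat v|_*$. I would take the competitor $\hat w=\hat v+\gamma s\,\hat z$, where $\hat z\in H^1_D(\hat{\Omega})$ realises the seminorm $|\cdot|_*$ and $\gamma>0$ is a small number fixed only at the end, and then show $\tilde a_h(\hat v,\hat w)\gtrsim(|||\hat v|||+s)\,|||\hat w|||$: testing against $\hat v$ recovers $|||\hat v|||$ by coercivity, testing against $\gamma s\,\hat z$ injects $s$, and a Young inequality absorbs the mixed terms. (I read the denominator in the usual convection–diffusion form $(|||\hat v|||+|(\textbf{V}-\tilde{\textbf{V}})\hat v|_*)\,|||\hat w|||$.)

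For the coercive ingredient, since $H^1_D(\hat{\Omega})\subset U_h$, the identity \eqref{bilin:AhAhtilde} gives $\tilde a_h(\hat v,\hat v)=a_h(\hat v,\hat v)$, and Lemma~\ref{lem:coercivity} then yields $c_0|||\hat v|||^2\le\tilde a_h(\hat v,\hat v)$ with $c_0>0$ independent of $h,\varepsilon,\delta,t$; in fact, for a continuous $\hat v$ the jump penalty and $\tilde p_h(\hat v,\hat v)$ vanish and $\tilde a_h(\hat v,\hat v)$ equals $|||\hat v|||^2$ plus the nonnegative outflow term $\tfrac{1}{2}\int_{\Gamma_N}(\textbf{V}\cdot\textbf{n})\hat v^2$ (nonnegativity by \eqref{ass:A4}). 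For the corrector, I would apply the definition \eqref{norm:helm} of $|\cdot|_*$ to $\textbf{q}=(\textbf{V}-\tilde{\textbf{V}})\hat v\in H^0(\hat{\Omega})^2$; since the supremum there is invariant under $\hat w\mapsto-\hat w$, this produces $\hat z\in H^1_D(\hat{\Omega})$ with $|||\hat z|||=1$ and $-\int_{\hat{\Omega}}J\,(\textbf{V}-\tilde{\textbf{V}})\hat v\cdot\textbf{F}^{-T}\nabla_X\hat z\ \ge\ \tfrac{1}{2}s$, the integral on the left being exactly the volume part of $f_h(\hat v,\hat z)$.

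Expanding $\tilde a_h(\hat v,\hat z)=d_h(\hat v,\hat z)+f_h(\hat v,\hat z)+\varepsilon j_h(\hat v,\hat z)$, the continuity Lemma~\ref{lem:continuity} controls the parasitic pieces: $\varepsilon j_h(\hat v,\hat z)=0$ since $\hat v,\hat z$ are continuous, $|d_h(\hat v,\hat z)|\le\max(1,c_*)\,|||\hat v|||\,|||\hat z|||=\max(1,c_*)\,|||\hat v|||$ by \eqref{ineq:dh}, and $f_h(\hat v,\hat z)$ — handled against $H^1_D(\hat{\Omega})$-functions exactly as in the proof of Lemma~\ref{lem:continuity} — is $\ge\tfrac{1}{2}s$ (should the Neumann jump of $f_h$ contribute, its $\int_{\Gamma_N}(\textbf{V}\cdot\textbf{n})\hat v\hat z$ term is split by Young against the boundary energy $\tfrac{1}{2}\int_{\Gamma_N}(\textbf{V}\cdot\textbf{n})\hat v^2$ already carried by $\tilde a_h(\hat v,\hat v)$). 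Hence $\tilde a_h(\hat v,\hat z)\ge\tfrac{1}{2}s-C_1|||\hat v|||$ with $C_1$ independent of $h,\varepsilon,\delta,t$, so
\[
\tilde a_h(\hat v,\hat w)=\tilde a_h(\hat v,\hat v)+\gamma s\,\tilde a_h(\hat v,\hat z)\ \ge\ c_0|||\hat v|||^2+\tfrac{\gamma}{2}s^2-\gamma C_1 s\,|||\hat v|||,
\]
and a Young estimate on the last term together with the choice $\gamma:=c_0/(2C_1^2)$ gives $\tilde a_h(\hat v,\hat w)\ge\tfrac{c_0}{2}|||\hat v|||^2+\tfrac{\gamma}{4}s^2\gtrsim(|||\hat v|||+s)^2$; meanwhile $|||\hat w|||\le|||\hat v|||+\gamma s\lesssim|||\hat v|||+s$ and $\hat w\neq0$ whenever $\hat v\neq0$, so dividing yields the asserted constant.

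The step I expect to be delicate is the control of $\tilde a_h(\hat v,\hat z)$: one must keep the full $\tfrac{1}{2}s$ coming from $f_h$ while bounding every remaining contribution by an $\varepsilon$-robust multiple of $|||\hat v|||\,|||\hat z|||$ — the touchiest point being any outflow ($\Gamma_N$) term carried by the upwind jump of $f_h$, which has to be absorbed into the boundary energy already present in $\tilde a_h(\hat v,\hat v)$ rather than estimated by a non-robust trace inequality. After that, the homogeneous scaling of $\hat z$ by $s$ and the final Young absorption are routine bookkeeping.
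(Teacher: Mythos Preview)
Your proposal is correct and follows essentially the same ``coercivity plus corrector from $|\cdot|_*$'' construction as the paper; you also correctly read the denominator as $(|||\hat v|||+|(\textbf{V}-\tilde{\textbf{V}})\hat v|_*)\,|||\hat w|||$, which is how the paper both proves and later uses the lemma. The only cosmetic difference is the scaling of the corrector: the paper takes $\hat v_\sigma=\hat v+\frac{|||\hat v|||}{1+C_1}\hat w_\sigma$ (corrector scaled by $|||\hat v|||$), so the parasitic term $-C_1|||\hat v|||^2$ is absorbed directly without Young, whereas your choice $\hat w=\hat v+\gamma s\,\hat z$ scales by $s$ and therefore needs the extra Young step; both routes yield the same conclusion with constants depending only on $c_*$.
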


\begin{proof}
Let $\hat{v} \in H^1_D(\hat{\Omega})-\{0\}$ and $ \sigma \in ]0;1[ $. Then by definition of the norm $|\cdot|_{*}$ \eqref{norm:helm} there exists $\hat{w}_\sigma \in H^1_D(\hat{\Omega})$ s.t. $ |||\hat{w}_\sigma ||| = 1$ and

\begin{equation*}
     f_h(\hat{v},\hat{w}_\sigma) =  - \int _{\hat{\Omega}} J\hat{v}(\textbf{V}-\Tilde{\textbf{V}})\cdot \textbf{F}^{-T} \nabla_X \hat{w}_\sigma \geq \sigma |(\textbf{V}-\Tilde{\textbf{V}})\hat{v}|_*.
\end{equation*}

\noindent Then $\tilde{a}_h(\hat{v},\hat{w}_\sigma) \geq d_h(\hat{v},\hat{w}_\sigma) + \sigma |(\textbf{V}-\Tilde{\textbf{V}})\hat{v}|_* \geq \sigma |(\textbf{V}-\Tilde{\textbf{V}})\hat{v}|_*-C_1 |||\hat{v} |||$ with $C_1>0.$

\noindent Let us define $\hat{v}_\sigma \! \in \! H^1_D(\hat{\Omega})$ as

\begin{equation*}
    \hat{v}_\sigma =\hat{v} + \frac{ |||\hat{v} |||}{1+C_1} \hat{w}_\sigma \mbox{ s.t. }  |||\hat{v}_\sigma ||| \leq  |||\hat{v} ||| (1+\frac{1}{1 + C_1})
\end{equation*}

\noindent Thus by \eqref{bilin:AhAhtilde} and \eqref{ineq:dh}, \eqref{ineq:jh}, \eqref{ineq:fh}
\begin{equation*}
    \tilde{a}_h(\hat{v},\hat{v}_\sigma) \! = \!  a(\hat{v},\hat{v}) \! + \!  |||\hat{v} ||| \frac{a(\hat{v},\hat{w}_\sigma)}{1+C_1} \geq  |||\hat{v} ||| \cdot [ |||\hat{v} |||+\frac{\sigma |(\textbf{V}-\Tilde{\textbf{V}})\hat{v}|_*  - C_1 |||\hat{v} |||}{1+C_1}].
\end{equation*}

\noindent Finally
\begin{equation*}
    \underset{w\in H^1_D(\Omega)-\{0\}}{\sup} \frac{\tilde{a}_h(\hat{v},\hat{w})}{ |||\hat{w} |||}\geq \frac{\tilde{a}_h(\hat{v},\hat{v}_\sigma)}{ |||\hat{v}_\sigma |||} \geq \frac{\sigma |(\textbf{V}-\Tilde{\textbf{V}})\hat{v}|_*+ |||\hat{v} |||}{2C_1}.
\end{equation*}

\end{proof}

\noindent $C_1 = O(\max (1,c_*))$ with $c_*$ defined in assumption \eqref{ass:B5}. Since the norms are weighted, the inf-sup constant $C$ does not depend on time, the norms do.

\begin{remark}\label{rq:coercivity}
Since we have all the tools, we can explain why \eqref{ass:B4} and \eqref{ass:B5} are not restrictive. When looking for solutions $\hat{u}=e^{\gamma_0t}\hat{w}$, \eqref{eq:mmform} becomes
\begin{equation}
\frac{\partial \hat{w}}{\partial t} + (\textbf{V}-\tilde{\textbf{V}}) \cdot \textbf{F}^{-T} \nabla_X \hat{w} - \frac{\varepsilon}{J} \nabla_X \cdot \{J\textbf{F}^{-1}\textbf{F}^{-T}\nabla_X \hat{w}\} + \gamma_0 \hat{w}= \hat{f}e^{-\gamma_0 t}
\end{equation}

And the method holds by adding a reaction term $\int_K \gamma_0\hat{v}_1 \hat{v}_2$ to $d_h(\hat{v}_1,\hat{v}_2)$ and change \eqref{ass:B4} and \eqref{ass:B5} into

\begin{align*}
    & \gamma_0 -\frac{1}{2}  \nabla \cdot (\textbf{V}(t,x)-\tilde{\textbf{V}}(t,x)) = \beta(t) > 0 \\
    & \forall t\in [0,T], \forall x \in \Omega \;|\gamma_0-\nabla \cdot (\textbf{V}(t,x)-\tilde{\textbf{V}}(t,x))|\leq c_* \beta(t)
\end{align*}

Which does not change any of the properties stated in the previous section.
\end{remark}

\section{Error estimates}\label{sec:errest}

This section dedicates two major subsections to the study of error estimates. The former \textit{a priori} estimate first outlines the convergence of the method when the mesh velocity is chosen regular enough (see remark \ref{rq:velocity}). It concludes in remark \ref{rq:apriori} on the dependence of the error on a balance between the remaining advection speed $|\textbf{V}-\Tilde{\textbf{V}}|$ and the higher order derivatives of the mesh velocity $\Tilde{\textbf{V}}$. The remaining subsections derive an \textit{a posteriori} error estimate, defined in \eqref{stestim} and shown to be local. Theorem \ref{thm:apstsp-ti} concludes on its reliability.

\subsection[An \textit{a priori} error estimate]{An \textit{a priori} error estimate for the semi-discrete formulation}

In this subsection we consider the \textit{a priori} estimate to the semi-discretisation. In theorem \ref{thm:apriori}, we highlight a balance between $ \Vert \textbf{V}-\Tilde{\textbf{V}} \Vert ^2/\varepsilon$ and $ \Vert \nabla \cdot \Tilde{\textbf{V}} \Vert $. We then give the approximation lemma \ref{lem:H2} to explicitly expose the dependence in the mesh velocity $\Tilde{\textbf{V}}$. The conclusion is given in remark \ref{rq:apriori} for a DG method with polynomial order $1\leq p \leq 2$. Additionally, unlike \cite{cangiani17}, where \textbf{Theorem 46} gives a convergence rate when $J^\frac{1}{2}\textbf{F}^{-T} \nabla_X \hat{u} \in H^l(\hat{\Omega})^2$, in lemma \ref{lem:H2} we express a sufficient condition on the ALE velocity for the semi-discrete method to be convergent. 

Since we want a convergence theorem, we want all the \textit{a priori} error estimate to be expressed in the spatial variable. To do so we will start by using the weighted norms and use remark \ref{rq:normequivalence}.

In this section we suppose higher regularity for the solution of \eqref{weakform:ctn}, namely $\hat{u} \in H^2_2(\hat{\mathcal{Q}}^T)$ and $\frac{\partial\hat{u}}{\partial t} \in H^1_2(\hat{\mathcal{Q}}^T)$, this implies also that $\hat{u}$ satisfies \eqref{eq:mmform} almost everywhere and $\hat{u} \in H^1_C(\hat{\mathcal{Q}}^T)$. 

To do so, we notice that since $\chi(t,\cdot)\in C^2(\hat{\Omega},\Omega)$, such a regularity requirement is equivalent to having $u \in H^2_2(\mathcal{Q}^T)$ and $\frac{\partial u}{\partial t} \in H^1_2(\mathcal{Q}^T)$ that satisfies \eqref{eq:c-d} almost everywhere. We therefore can use the regularity theorems from \cite{roubicek13} and \cite{evans10}.

For the analysis we need to define $\Pi_{L^2}\colon (H^0(\mathcal{T}_h))^2 \to V_h^2$ the $L^2$-projection and $P_h(t)\colon H^0(\mathcal{T}_h) \to V_h$ the weighted-projections s.t.

\begin{equation}\label{proj:weighted}
    \forall \hat{v} \in H^0(\mathcal{T}_h),\; \forall \hat{v}_h \in V_h,\; \int_K J(t,\cdot)(\hat{v}-P_h\hat{v})\hat{v}_h=0 .
\end{equation}

In the following lemma we call

\begin{align}
    &\hat{e} \! = \! (\hat{u}\! -\! P_h\hat{u})\! + \! (P_h\hat{u} \! - \! \hat{u}_h) \! =\!  \hat{e}_p \! +\!  \hat{e}_h \mbox{ and } \hat{\textbf{E}}_p \! = \! J^\frac{1}{2}\textbf{F}^{-T}\nabla_X \hat{u} \! - \! \Pi_{L^2}(J^\frac{1}{2}\textbf{F}^{-T}\nabla_X \hat{u})\label{fct:Ep}\\
    &\hat{N} \colon [0,T] \times H^1(\mathcal{T}_h) \to \mathbb{R}, \mbox{ the seminorm s.t. } \hat{N}(t,\hat{v})^2 = |\hat{v}|^2_{U(t)} + 2j_h(t;\hat{v},\hat{v})\label{norm:N}\\
    &\hat{B} \colon H^{1/2}(\mathcal{T}_h)^2 \to \mathbb{R}, \mbox{ the seminorm s.t. } \hat{B}(\hat{\textbf{q}})^2 = \frac{1}{C_T} \underset{\mathcal{E}_D}{\sum}h_E\int_E \{\!\{\hat{\textbf{q}}\}\!\}^2 \label{norm:B}
\end{align}

\noindent Notice that $\hat{\textbf{E}}_p \in H^1(\mathcal{T}_h)^2$ and define $N(t,\cdot)$ and $B(t,\cdot)$ as the corresponding time dependant seminorm s.t. $N(t,v)=\hat{N}(t,\hat{v})$ and $B(t,\textbf{q})=\hat{B}(\hat{\textbf{q}})$ and $e_p$ and $\textbf{E}_p$ are the corresponding functions in the Eulerian variable. 

\noindent Recalling that $\dot{J}= J(\nabla \cdot \tilde{\textbf{V}})$, $\dot{\textbf{F}}=(\nabla \tilde{\textbf{V}})\textbf{F}$ (implying $\dot{\textbf{F}^{-1}}=-\textbf{F}^{-1}(\nabla \tilde{\textbf{V}})$) yields

\begin{equation*}
     \frac{d}{d t} \Vert \hat{v} \Vert ^2_{U_K(t)} =   \int_K J(\textbf{F}^{-T}\nabla_X\hat{v})^T(\nabla \cdot \Tilde{\textbf{V}}-\nabla \Tilde{\textbf{V}}-(\nabla \Tilde{\textbf{V}})^T)\textbf{F}^{-T}\nabla_X\hat{v} 
\end{equation*}
 
\noindent for $C_0 (t) = \frac{1}{2} \Vert \nabla \cdot \tilde{\textbf{V}} \Vert _{L^1(0,t;L^\infty(\Omega))}$ and $C_1 (t) = C_0 (t) +  \Vert \frac{\nabla \Tilde{\textbf{V}}+(\nabla \Tilde{\textbf{V}})^T}{2} \Vert _{L^1(0,t;L^\infty(\Omega))}$. Using Gronwall's inequality we have:

\begin{equation}\label{ineq:Unorm}
     \Vert \hat{v} \Vert _{U(0)} e^{-C_1(t)}  \leq  \Vert \hat{v} \Vert _{U(t)} \leq  \Vert \hat{v} \Vert _{U(0)} e^{C_1(t)}   
\end{equation}

\noindent Similarly

\begin{align}
     \Vert \hat{v} \Vert _{H(0)} e^{-C_0(t)} & \leq  \Vert \hat{v} \Vert _{H(t)} \leq  \Vert \hat{v} \Vert _{H(0)} e^{C_0(t)}   \label{ineq:Hnorm} \\
    \int_E \hat{v}^2 e^{-2C_0(t)} & \leq \int_E J \hat{v}^2 \leq  \int_E \hat{v}^2 e^{2C_0(t)}  \label{ineq:E} \\
    j_h(0;\hat{v},\hat{v}) e^{-2C_1(t)} & \leq j_h(t;\hat{v},\hat{v}) \leq  j_h(0;\hat{v},\hat{v}) e^{2C_1(t)}  \label{ineq:jtime}
\end{align}

We can now state the \textit{a priori} error estimate that will be interpreted in remark \ref{rq:apriori} with help of lemma \ref{lem:H2}.

\begin{theorem}\label{thm:apriori} Let $T>0$ and $\hat{u}_h$ be the solution of \eqref{weakform:discr}, $\hat{e}_p$ and $\hat{E}_p$ be defined in \eqref{fct:Ep} and $N(t,\cdot)$ and $B(t,\cdot)$ be the seminorms respectively defined in \eqref{norm:N} and \eqref{norm:B}.

\noindent \textbf{For SIPG ($\theta=1$)}

\begin{multline}\label{ineq:sipg}
     \Vert e \Vert ^2_{L^2_\infty(\mathcal{Q}^T)}+\frac{\varepsilon}{4}  \Vert N(\cdot,e) \Vert ^2_{L^2([0,T])}\leq C_S( \Vert e(0) \Vert ^2_{L^2(\Omega)} + 2  \Vert e_p \Vert ^2_{L^2_\infty(\mathcal{Q}^T)}\\
    +30\varepsilon \Vert N(\cdot,e_p) \Vert ^2_{L^2([0,T])} + \frac{97}{8} \varepsilon  \Vert B(\cdot,\textbf{E}_p) \Vert ^2_{L^2([0,T])})
\end{multline}

\noindent with $C_S = \exp [\frac{8 + 12  e^{4C_0^\Omega(T)}}{\varepsilon}  \Vert \textbf{V}-\Tilde{\textbf{V}} \Vert ^2_{L^\infty_2(\mathcal{Q}^T)^2} + \frac{1}{2} \Vert \nabla \cdot \Tilde{\textbf{V}} \Vert _{L^\infty_1(\mathcal{Q}^T)}]$ holds.

\noindent \textbf{For NIPG ($\theta=-1$)}

\begin{multline}\label{ineq:nipg}
     \Vert e \Vert ^2_{L^2_\infty(\mathcal{Q}^T)}+\frac{\varepsilon}{2}  \Vert N(\cdot,e) \Vert ^2_{L^2([0,T])}\leq C_N( \Vert e(0) \Vert ^2_{L^2(\Omega)} + 2  \Vert e_p \Vert ^2_{L^2_\infty(\mathcal{Q}^T)} \\
    +4\varepsilon \Vert N(\cdot,e_p) \Vert ^2_{L^2([0,T])} + \frac{17}{8}\varepsilon  \Vert B(\cdot,\textbf{E}_p) \Vert ^2_{L^2([0,T])})
\end{multline}

\noindent with $C_N = \exp [\frac{1 + 2 e^{4C_0^\Omega(T)}}{\varepsilon}  \Vert \textbf{V}-\Tilde{\textbf{V}} \Vert ^2_{L^\infty_2(\mathcal{Q}^T)^2} + \frac{1}{2} \Vert \nabla \cdot \Tilde{\textbf{V}} \Vert _{L^\infty_1(\mathcal{Q}^T)}]$ holds.
\end{theorem}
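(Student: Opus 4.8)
The plan is to obtain both estimates from a single energy identity. First I would take $\hat v_h=\hat e_h=P_h\hat u-\hat u_h\in V_h$ in the inconsistency identity of Lemma~\ref{lem:inconsistency}, splitting $\hat e=\hat e_p+\hat e_h$. Since $P_h$ is the $J$-weighted projection \eqref{proj:weighted}, the term $\int_{\hat\Omega}J\,\partial_t\hat e_p\,\hat e_h$ vanishes only partially — one must be careful because $J$ depends on $t$, so $\frac{d}{dt}\int_{\hat\Omega}J\hat e_p\hat e_h$ involves $\dot J=J\nabla\cdot\tilde{\textbf V}$ — and this is where the factor $\tfrac12\|\nabla\cdot\tilde{\textbf V}\|_{L^\infty_1}$ in $C_S,C_N$ comes from. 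Then I would write $\int_{\hat\Omega}J\,\partial_t\hat e_h\,\hat e_h=\tfrac12\frac{d}{dt}\|\hat e_h\|_{H(t)}^2-\tfrac12\int_{\hat\Omega}\hat e_h^2\dot J$ and use \eqref{bilin:AhAhtilde} (since $\hat e_h\in V_h^c\subset H^1_D$) to replace $a_h(\hat e_h,\hat e_h)$ by $\tilde a_h(\hat e_h,\hat e_h)$, whose diagonal part controls $\varepsilon N(t,\hat e_h)^2$ plus the reaction term $\beta\|\hat e_h\|_{H(t)}^2$ (from $-\nabla\cdot(\textbf V-\tilde{\textbf V})=2\beta$ in \eqref{ass:B4}); note $f_h(\hat e_h,\hat e_h)$ is skew so it only contributes a boundary term of the right sign from $\partial K_{\mathrm{out}}$.

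Next I would bound the cross terms on the right: $a_h(\hat e_p,\hat e_h)$ and the projection-defect term $\varepsilon\sum_{\mathcal E_D}\int_E\{\!\{\hat{\textbf E}_p\}\!\}\cdot J^{1/2}\textbf F^{-T}[\![\hat e_h]\!]$. For the latter, a trace/inverse inequality \eqref{ppt:ite} against the jump penalty $j_h(\hat e_h,\hat e_h)$ and Young's inequality give $\le \tfrac\varepsilon{8}\sum h_E\int_E\{\!\{\hat{\textbf E}_p\}\!\}^2\cdot(\text{const})+\tfrac\varepsilon{8}N(t,\hat e_h)^2$, which is exactly the origin of the $\tfrac{97}{8}$ (SIPG) and $\tfrac{17}{8}$ (NIPG) coefficients of $\|B(\cdot,\textbf E_p)\|^2$ — the SIPG case being worse because $\tilde p_h(\hat e_p,\hat e_h)\neq0$ must also be absorbed, producing the extra $\sqrt{C_T/\alpha}$-type losses already seen in Lemma~\ref{lem:coercivity}. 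The advective/diffusive cross terms from $a_h(\hat e_p,\hat e_h)$ are split: the $\varepsilon$-diffusion and $\beta$-reaction parts against $N(t,\hat e_h)^2$ and $\|\hat e_h\|_{H(t)}^2$, and the genuine advection term $\int J(\textbf V-\tilde{\textbf V})\cdot\textbf F^{-T}\nabla_X\hat e_p\,\hat e_h$ — here, after integration by parts to move the derivative onto $\hat e_h$, one estimates $\|(\textbf V-\tilde{\textbf V})\hat e_h\|$ in $L^2$ and uses Young with weight $\varepsilon$, which is what forces the factor $\|\textbf V-\tilde{\textbf V}\|^2_{L^\infty_2}/\varepsilon$ into the exponential constants $C_S,C_N$; the appearance of $e^{4C_0^\Omega(T)}$ is the price of converting $H(t)$-/$U(t)$-norms to $H(0)$-/$U(0)$-norms via \eqref{ineq:Unorm}--\eqref{ineq:jtime} so that the Gronwall argument runs in a time-independent reference metric.

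Having collected everything, I would arrive at a differential inequality of the form $\frac{d}{dt}\|\hat e_h\|_{H(t)}^2+c\varepsilon N(t,\hat e_h)^2\le G(t)\|\hat e_h\|_{H(t)}^2+R(t)$, where $G(t)$ integrates to the exponent of $C_S$ (resp.\ $C_N$) and $R(t)$ is the projection-error right-hand side. Gronwall's inequality then gives the bound on $\sup_t\|\hat e_h\|_{H(t)}^2+\varepsilon\int_0^T N(t,\hat e_h)^2$; adding the triangle inequality $\|e\|\le\|e_p\|+\|e_h\|$, $N(t,e)\le N(t,e_p)+N(t,e_h)$ (and $\|e\|_{L^2(\Omega)}=\|\hat e_h\|_{H(t)}+\ldots$ via Remark~\ref{rq:normequivalence}) converts the $\hat e_h$-estimate into the stated estimates on $e$, absorbing the explicit numerical constants $2$, $30$/$4$, $97/8$/$17/8$. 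The handling of $\hat e_h(0)$: since $\hat u_h(0)$ is a projection of $\hat u_0$, $\|\hat e_h(0)\|_{H(0)}\le\|e(0)\|_{L^2(\Omega)}+\|e_p(0)\|$, contributing the $\|e(0)\|^2_{L^2(\Omega)}$ and part of the $\|e_p\|^2_{L^2_\infty}$ terms. The main obstacle, I expect, is the bookkeeping: tracking the precise constants through the $\varepsilon$-weighted Young inequalities and through the time-metric conversions \eqref{ineq:Unorm}--\eqref{ineq:jtime} so that they aggregate exactly to $30$, $\tfrac{97}{8}$, $C_S$ (and $4$, $\tfrac{17}{8}$, $C_N$ in the simpler NIPG case where $\tilde p_h$ vanishes on the diagonal), rather than any conceptual difficulty — the skew-symmetry of $f_h$ and the coercivity structure already make the qualitative estimate routine.
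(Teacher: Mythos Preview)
Your overall strategy is sound and would produce an estimate of the claimed form, but it differs from the paper's route and contains one genuine slip.

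\textbf{The slip.} You write ``use \eqref{bilin:AhAhtilde} (since $\hat e_h\in V_h^c\subset H^1_D$)''. This is false: $\hat e_h=P_h\hat u-\hat u_h$ lies in $V_h$, which is discontinuous, not in $V_h^c$. Consequently \eqref{bilin:AhAhtilde} does not apply to $\hat e_h$, and in the SIPG case $\tilde p_h(\hat e_h,\hat e_h)\ne0$. The repair is immediate---use the coercivity Lemma~\ref{lem:coercivity} on $V_h$, which already absorbs $\tilde p_h(\hat e_h,\hat e_h)$ for $\alpha\ge 2C_T$---so the error is not fatal, but the reasoning as written would fail.

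\textbf{Where the approach diverges from the paper.} You test the inconsistency identity with $\hat e_h$, derive a differential inequality for $\|\hat e_h\|_{H(t)}^2$, Gronwall, and only at the end recover $e$ by the triangle inequality. The paper instead starts from $\int J\partial_t\hat e\,\hat e+a_h(\hat e,\hat e)$, splits the test function as $\hat e=\hat e_p+\hat e_h$, and uses the $J$-orthogonality $\int J\,\partial_t\hat e_h\,\hat e_p=0$ (note: this direction, not $\int J\,\partial_t\hat e_p\,\hat e_h$) so that the time-derivative block becomes $\tfrac12\frac{d}{dt}(\|e\|_{L^2(\Omega)}^2-\|e_p\|_{L^2(\Omega)}^2)$ exactly, with no residual $\dot J$-term to estimate. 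The Gronwall is then run in the form $\frac{d}{dt}(a-\alpha)+(b-\beta)\le C(t)(a+\alpha)$ with $a=\|e\|^2$, $\alpha=\|e_p\|^2$, which delivers the stated constants directly without a final triangle-inequality step. Your route is the more textbook one; the paper's buys a cleaner bookkeeping of the constants and avoids the extra $\int J(\nabla\cdot\tilde{\textbf V})\hat e_p\hat e_h$ term you correctly anticipate.

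\textbf{A minor mischaracterisation.} The factor $e^{4C_0^\Omega}$ does not come from converting $H(t)$-norms to $H(0)$-norms so that Gronwall runs in a fixed metric (the paper's Gronwall stays in the moving $H(t)$-metric throughout). It enters only through the edge-to-volume bound \eqref{ineq:52}, applied when estimating the upwind jump term $\int_{\partial K_{\mathrm{in}}}J(\textbf V-\tilde{\textbf V})\cdot\textbf F^{-T}[\![\hat e]\!]\hat e_h$ against $\|\hat e_h\|_{H(t)}^2$.
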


\begin{proof}
\noindent In the beginning of the proof we work with the generic form of interior penalty methods, namely $\theta \in \{ -1, 1\}$. We have the following inequalities

\begin{align*}
    \int _{\hat{\Omega}}J\frac{\partial \hat{e}}{\partial t} \hat{e}  = \frac{1}{2} \frac{d}{d t}  \Vert e \Vert _{L^2(\Omega)}^2  -  \int_{\hat{\Omega}} \frac{\nabla\cdot \tilde{\textbf{V}}}{2} J \hat{e}^2 & \geq \frac{1}{2} \frac{d}{d t}  \Vert e \Vert _{L^2(\Omega)}^2 - \frac{ \Vert \nabla\cdot \tilde{\textbf{V}} \Vert _\infty}{2} \Vert e \Vert _{L^2(\Omega)}^2 \\
    \int _{\hat{\Omega}}J\frac{\partial \hat{e}_p}{\partial t} \hat{e}_p  = \frac{1}{2} \frac{d}{d t}  \Vert e_p \Vert _{L^2(\Omega)}^2 \! - \! \int_{\hat{\Omega}} \frac{\nabla\cdot \tilde{\textbf{V}}}{2} J \hat{e}_p^2 & \leq \frac{1}{2} \frac{d}{d t}  \Vert e_p \Vert _{L^2(\Omega)}^2 + \frac{ \Vert \nabla\cdot \tilde{\textbf{V}} \Vert _\infty}{2} \Vert e_p \Vert _{L^2(\Omega)}^2. \\
\end{align*}

\noindent and the identity 

\begin{align*}
    \int _{\hat{\Omega}}J\frac{\partial \hat{e}}{\partial t} \hat{e} + a_h(\hat{e},\hat{e}) & =  \int _{\hat{\Omega}}J\frac{\partial \hat{e}}{\partial t} \hat{e}_p + \int _{\hat{\Omega}}J\frac{\partial \hat{e}}{\partial t} \hat{e}_h + a_h(\hat{e},\hat{e}_p) + a_h(\hat{e},\hat{e}_h) \\
    & = \int _{\hat{\Omega}}J\frac{\partial \hat{e}_p}{\partial t} \hat{e}_p \! + \! \underset{0}{\underbrace{\int _{\hat{\Omega}}J\frac{\partial \hat{e}_h}{\partial t} \hat{e}_p}} \! + \! a_h(\hat{e},\hat{e}_p) \! + \! \underset{\varepsilon \underset{\mathcal{E}_D}{\sum}\int_E \{\!\{\hat{\textbf{E}}_p\}\!\}\cdot J^\frac{1}{2} \textbf{F}^{-T}[\![ \hat{e}_h ]\!]}{\underbrace{\int _{\hat{\Omega}}J\frac{\partial \hat{e}}{\partial t} \hat{e}_h \! + \!a_h(\hat{e},\hat{e}_h)}}
\end{align*}

\noindent holds. Note that since $\hat{u}\in H^2(\hat{\Omega})$, $[\![ \hat{e} ]\!]=[\![ \hat{u}_h ]\!]$.

\noindent We will also use the following bounds:

\begin{align}
    &\forall \hat{v} \in H^1(\mathcal{T}_h) \; \underset{E\in \mathcal{E}_D}{\sum} h_E \int_E \{\!\{\Pi_{L^2} ( J^\frac{1}{2}\textbf{F}^{-T} \nabla_X \hat{v}) \}\!\}^2 \leq  C_T  \Vert \hat{v}  \Vert _{U(t)}^2 \label{ineq:51}\\
    &\forall \hat{v} \in V_h \; \underset{E\in \mathcal{E}_D}{\sum} h_E \int_E J\hat{v}^2 \leq  C_T e^{4C_0^\Omega} \Vert \hat{v} \Vert _{H(t)}^2 \label{ineq:52}\\
    &\forall \hat{v},\hat{w} \in  H^1(\mathcal{T}_h)\; \forall c >0, \; j_h(\hat{v},\hat{w}) \leq  \frac{c}{2}j_h(t;\hat{v},\hat{v}) + \frac{1}{2c}j_h(t;\hat{w},\hat{w})\label{ineq:53}
\end{align}

\noindent with $C_T$ defined by \eqref{ppt:ite}. \eqref{ineq:51} follows from the trace inequality \eqref{ppt:ite} and \eqref{ppt:L2}, \eqref{ineq:52} is a consequence of \eqref{ppt:ite} and \eqref{ineq:Hnorm}, and \eqref{ineq:53} by Cauchy-Schwarz and Young's inequalities. Estimating 

\begin{multline}\label{ineq:I}
I=\frac{1}{2} (\frac{d}{d t}  \Vert e \Vert _{L^2(\Omega)}^2 - \frac{d}{d t}  \Vert e_p \Vert _{L^2(\Omega)}^2) + \varepsilon |\hat{e}|^2_{U(t)} + \varepsilon j_h(t;\hat{e},\hat{e}) \\
\leq \underset{K \in \mathcal{T}_h}{\sum} - \int_K J \hat{e} (\textbf{V}-\Tilde{\textbf{V}}) \cdot  \textbf{F}^{-T}\nabla_X \hat{e}- J \textbf{F}^{-T}\nabla_X \hat{e} \cdot \textbf{F}^{-T}\nabla_X \hat{e}_p + J \hat{e}_p (\textbf{V}-\Tilde{\textbf{V}}) \cdot  \textbf{F}^{-T}\nabla_X \hat{e}\\
+ \int_{\partial K_{\mbox{\tiny in}}^t} J (\textbf{V}-\Tilde{\textbf{V}})\cdot \textbf{F}^{-T} [\![ \hat{e} ]\!] \hat{e}_h +  \varepsilon j_h(t;\hat{e},\hat{e}_p) + \Tilde{p}_h(\hat{e},\hat{e}_h)
 + \underset{\mathcal{E}_D}{\sum} \int_E \{\!\{\hat{\textbf{E}}_p\}\!\}\cdot J^\frac{1}{2} \textbf{F}^{-T}[\![ \hat{e}_h ]\!]\\
 + \frac{1}{2} \Vert \nabla\cdot \tilde{\textbf{V}} \Vert _{L^\infty(\Omega)}( \Vert e \Vert _{L^2(\Omega)}^2+ \Vert e_p \Vert _{L^2(\Omega)}^2 )
\end{multline}

\noindent Using $\hat{e}_h\! =\! \hat{e}\!-\!\hat{e}_p$, Young's inequality and bounding $I$ via $\alpha \! \geq \! 2C_T$. For positive $\alpha_i$:

\begin{multline}\label{ineq:finalapr}
    I \leq  [\frac{\alpha_1}{2}+\frac{\alpha_2}{2}+\frac{\alpha_3}{2}+\frac{\alpha_6}{4}(1+\theta)+\frac{\alpha_7}{4}(1+\theta)] \varepsilon |\hat{e}|^2_{U(t)} + [\frac{1}{2\alpha_2}+\frac{1}{4\alpha_8}] \varepsilon |\hat{e}_p|^2_{U(t)} \\
    + [ \frac{\alpha_4}{2}+  \frac{\alpha_5}{2}+\frac{(1+\theta)}{2\alpha_6}+\frac{\alpha_8}{2} +\frac{\alpha_9}{2}] \varepsilon j_h(t;\hat{e},\hat{e}) + [\frac{1}{2\alpha_5}+\frac{(1+\theta)}{2\alpha_7} +\frac{\alpha_{10}}{2}] \varepsilon j_h(t;\hat{e}_p,\hat{e}_p) \\
    + \frac{e^{4C_0^\Omega}}{4\alpha_4} \frac{1}{\varepsilon}\Vert \textbf{V}-\Tilde{\textbf{V}} \Vert _\infty^2 \Vert \hat{e}_h \Vert _{H(t)}^2 + \frac{\varepsilon}{4}[\alpha_9^{-1}+\alpha_{10}^{-1}] \hat{B}(\hat{\textbf{E}}_p)^2 + \frac{1}{\varepsilon} \Vert \textbf{V}-\Tilde{\textbf{V}} \Vert ^2_{L^\infty(\Omega)^2} \frac{1}{2\alpha_3}  \Vert e_p \Vert ^2_{L^2(\Omega)}\\
    + \frac{1}{\varepsilon} \Vert \textbf{V}-\Tilde{\textbf{V}} \Vert ^2_{L^\infty(\Omega)^2} \frac{1}{2\alpha_1}  \Vert e \Vert ^2_{L^2(\Omega)} +\frac{ 1}{2}\Vert \nabla\cdot \tilde{\textbf{V}} \Vert _{L^\infty(\Omega)}( \Vert e \Vert _{L^2(\Omega)}^2+ \Vert e_p \Vert _{L^2(\Omega)}^2 )
\end{multline}

\noindent is true for both SIPG and NIPG.

\noindent \textbf{For SIPG:}
$\alpha_6\!=\!\frac{3}{2}$, $\alpha_1\!=\!\alpha_2\!=\!\alpha_3\!=\!\alpha_7\!=\!\frac{1}{16}$, $\alpha_4\!=\!\alpha_5\!=\!\alpha_8\!=\!\alpha_9\!=\!\frac{1}{24}$, $\alpha_{10}\!=\!4$ implies
\begin{multline*}
\frac{1}{2} (\frac{d}{d t}  \Vert e \Vert _{L^2(\Omega)}^2 - \frac{d}{d t}  \Vert e_p \Vert _{L^2(\Omega)}^2) + \frac{\varepsilon}{8} (N(t,e)^2-120N(t,e_p)^2) - \frac{97}{16} \varepsilon \hat{B}(\hat{\textbf{E}}_p) \\
\leq ([8+12 e^{4C_0^\Omega} ]\frac{1}{\varepsilon} \Vert \textbf{V}-\Tilde{\textbf{V}} \Vert ^2_{L^\infty(\Omega)^2} +\frac{ 1}{2}\Vert \nabla\cdot \tilde{\textbf{V}} \Vert _{L^\infty(\Omega)})( \Vert e \Vert _{L^2(\Omega)}^2+ \Vert e_p \Vert _{L^2(\Omega)}^2 )
\end{multline*}

\noindent \textbf{For NIPG:}
$\alpha_1\!=\!\alpha_2\!=\!\alpha_3\!=\!\frac{1}{2}$, $\alpha_4\!=\!\alpha_5\!=\!\alpha_8\!=\!\alpha_9\!=\!\frac{1}{4}$, $\alpha_{10}\!=\!4$ implies
\begin{multline*}
\frac{1}{2} (\frac{d}{d t}  \Vert e \Vert _{L^2(\Omega)}^2 - \frac{d}{d t}  \Vert e_p \Vert _{L^2(\Omega)}^2) + \frac{\varepsilon}{4} (N(t,e)^2-8N(t,e_p)^2) - \frac{17}{16} \varepsilon \hat{B}(\hat{\textbf{E}}_p) \\
\hfill \leq ([1+2 e^{4C_0^\Omega} ]\frac{1}{\varepsilon} \Vert \textbf{V}-\Tilde{\textbf{V}} \Vert ^2_{L^\infty(\Omega)^2} +\frac{1}{2} \Vert \nabla\cdot \tilde{\textbf{V}} \Vert _{L^\infty(\Omega)})( \Vert e \Vert _{L^2(\Omega)}^2+ \Vert e_p \Vert _{L^2(\Omega)}^2 )\\
\mbox{and we conclude with Grönwall's inequality in the form: let }a,b,\alpha,\beta, C\geq0 \hfill \\
 \; \frac{d}{dt}(a-\alpha)+(b-\beta)\leq C(t)(a+\alpha) \mbox{ and } \mu(s,t)= \exp(\int_s^t C(y)dy) \mbox{ implies} \\
a(T) + \int_0^T\mu(s,T)b(s)ds \leq \mu(0,T)a(0) + 2\mu(0,T) \underset{0\leq s\leq T}{\max}\alpha(s) + \int_0^T \mu(s,T)\beta(s)ds
\end{multline*}
\end{proof}

\noindent Observe that in both cases, since $e^{4C_0^\Omega}$ remains close to 1 for short time intervals (where $t \nabla \cdot \tilde{\textbf{V}} = O(1)$), the multiplier ($C_S$ for SIPG and $C_N$ for NIPG) is driven exponentially by $ \Vert \textbf{V}-\Tilde{\textbf{V}} \Vert ^2/\varepsilon$ and $ \Vert \nabla \! \cdot \! \Tilde{\textbf{V}} \Vert $. This directs our study towards a balance between these terms. The following approximation properties yield the necessary tools to interpret the \textit{a priori} error estimate by providing upper bounds on the terms on the right hand side of \eqref{ineq:nipg} and \eqref{ineq:sipg}. In remark \ref{rq:apriori}, we will see that the actual balance is between $ \Vert \textbf{V}-\Tilde{\textbf{V}} \Vert_{L^\infty_2(\mathcal{Q}^T)^2} ^2/\varepsilon$ and $\Vert \Tilde{\textbf{V}} \Vert _{W^{3,\infty}_1(\mathcal{Q}^T)^2}$.

\begin{remark} \label{rq:velocity}
Suppose $\Tilde{\textbf{V}}\in W^{3,\infty}_1(\mathcal{Q}^T)^2$ and let $(\textbf{q}_K)_{K\in\mathcal{T}_h}$ be a collection of vectors such that $ \lVert \textbf{q}_K \rVert =1$. Furthermore, we write $\textbf{Y}_K=J^\frac{1}{2}\textbf{F}^{-T}\textbf{q}_K$. Then $\forall K, \; \textbf{Y}_K\in H^2(\mathcal{T}_h)^2$ and there is $\alpha_0, \alpha_1, \alpha_2>0$ such that:

\begin{align}
     \lVert \textbf{Y}_K \rVert _{L^2(K)^2}^2 &\leq|K| \exp(2\alpha_0 \Vert \Tilde{\textbf{V}} \Vert _{W^{1,\infty}_1(\mathcal{Q}^T)^2})\label{ineq:Y1}\\ 
    |\textbf{Y}_K|_{H^1(K)^2}^2 &\leq|K| \exp(2\alpha_1 \Vert \Tilde{\textbf{V}} \Vert _{W^{2,\infty}_1(\mathcal{Q}^T)^2})\label{ineq:Y2}\\ 
    |\textbf{Y}_K|_{H^2(K)^2}^2  &\leq|K| \exp(2C_2(t))\label{ineq:Y3}
\end{align}

\noindent with $C_2(t) = \alpha_2 \Vert \Tilde{\textbf{V}} \Vert _{W^{3,\infty}_1(\mathcal{Q}^T)^2}$. The proof is given in appendix, we just report that we used successive Grönwall's lemma.
\end{remark}

\begin{lemma}\label{lem:H2}
(Approximation in $H^2(\hat{\Omega})$) Let $T>0$, $1\leq p\leq 2$ and $\hat{u}_h$ be the solution of \eqref{weakform:discr}, $\hat{e}_p$ and $\hat{\textbf{E}}_p$ defined by \eqref{fct:Ep}, $N(t,\cdot)$ and $B(t,\cdot)$ be the seminorms defined by \eqref{norm:N} and \eqref{norm:B} respectively, $\Tilde{\textbf{V}}\in W^{3,\infty}_1(\mathcal{Q}^T)^2$ and $C_2(t)$ defined in \eqref{ineq:Y3}. Let ${h=\underset{E\in \mathcal{E}}{\max} \; h_E}$ then $\forall t \in [0,T]$

\begin{align}
N(t,e_p) &\lesssim e^{2C_0(t)+C_1(t)} h |\hat{u}(t,\cdot)|_{H^2(\hat{\Omega})}\label{ineq:a}\\
\label{ineq:b}\lVert \hat{e}_p \rVert _{L^2(\hat{\Omega})} &\lesssim e^{C_0(t)} h^2 |\hat{u}(t,\cdot)|_{H^2(\hat{\Omega})}\\
\label{ineq:c} B(t,\textbf{E}_p) &\lesssim h[e^{C_1(t)} |\hat{u}(t,\cdot)|_{H^2(\hat{\Omega})} + e^{C_2(t)}  \lVert \hat{u}(t,\cdot) \rVert _{L^2(\hat{\Omega})}] 
\end{align}

\end{lemma}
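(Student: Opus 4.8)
The plan is to establish each of \eqref{ineq:a}--\eqref{ineq:c} elementwise by classical scaling/Bramble--Hilbert arguments, sum over $\mathcal{T}_h$, and transfer the time-$t$ weighted quantities $|\cdot|_{U(t)}$, $\|\cdot\|_{H(t)}$, $j_h(t;\cdot,\cdot)$ back to the reference configuration ($t=0$, where $J\equiv 1$ and $\textbf{F}\equiv\textbf{I}$) only at the end, by means of the Gr\"onwall equivalences \eqref{ineq:Unorm}--\eqref{ineq:jtime}; this, together with the Gr\"onwall bounds on $\chi$ that underlie Remark~\ref{rq:velocity}, accounts for the factors $e^{C_0(t)},e^{C_1(t)},e^{C_2(t)}$. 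On each $K$, $P_h(t)$ acts as the $J(t,\cdot)|_K$-weighted $L^2$-projection onto $\mathcal{S}_p(K)$ and $\Pi_{L^2}$ as the Euclidean $L^2$-projection onto $\mathcal{S}_p(K)^2$; since $p\ge1$ both reproduce $\mathcal{S}_1(K)$, and since $e^{-2C_0(t)}\le J(t,\cdot)\le e^{2C_0(t)}$ the weighted projection $P_h(t)$ is $L^2(K)$-stable up to a power of $e^{C_0(t)}$. Writing $\hat u-P_h\hat u=(\hat u-q)-P_h(\hat u-q)$ with $q\in\mathcal{S}_p(K)$ a Taylor polynomial of $\hat u$ and combining this stability with the inverse inequality \eqref{ppt:ie} yields the elementwise bounds $\|\hat u-P_h\hat u\|_{L^2(K)}\lesssim e^{cC_0(t)}h_K^2|\hat u|_{H^2(K)}$ and $|\hat u-P_h\hat u|_{H^1(K)}\lesssim e^{cC_0(t)}h_K|\hat u|_{H^2(K)}$; summing the former over $\mathcal{T}_h$, and using \eqref{ineq:Hnorm} to relate $\|\cdot\|_{L^2(\hat\Omega)}$ to $\|\cdot\|_{H(t)}$, gives \eqref{ineq:b}.

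For \eqref{ineq:a} I split $N(t,e_p)^2=|\hat e_p|_{U(t)}^2+2j_h(t;\hat e_p,\hat e_p)$. By \eqref{ineq:Unorm}, $|\hat e_p|_{U(t)}^2\le e^{2C_1(t)}|\hat e_p|_{H^1(\mathcal{T}_h)}^2$, which the broken-$H^1$ estimate above bounds by $e^{2C_1(t)+cC_0(t)}h^2|\hat u(t,\cdot)|_{H^2(\hat\Omega)}^2$. For the jump term, $j_h(t;\hat e_p,\hat e_p)\le e^{2C_1(t)}j_h(0;\hat e_p,\hat e_p)$ by \eqref{ineq:jtime}; since $\hat u\in H^2(\hat\Omega)\subset C^0(\hat\Omega)$ has vanishing jumps, $j_h(0;\hat e_p,\hat e_p)=\sum_{E\in\mathcal{E}_D}\tfrac{\alpha}{h_E}\int_E[\![\hat u-P_h\hat u]\!]^2$ is controlled by the edge values $\|\hat u-P_h\hat u\|_{\partial K}$ of the projection error. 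Splitting once more and applying \eqref{ppt:ite} to the polynomial part $P_h(\hat u-q)\in\mathcal{S}_p(K)$ and a standard scaled trace inequality to $\hat u-q$ gives $\|\hat u-P_h\hat u\|_{\partial K}\lesssim e^{cC_0(t)}h_K^{3/2}|\hat u|_{H^2(K)}$, hence $j_h(0;\hat e_p,\hat e_p)\lesssim e^{cC_0(t)}h^2|\hat u(t,\cdot)|_{H^2(\hat\Omega)}^2$. Collecting the two contributions gives \eqref{ineq:a}.

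The heart of the lemma is \eqref{ineq:c}. Write $\textbf{M}:=J^{1/2}\textbf{F}^{-T}$, so $\hat{\textbf{E}}_p|_K=(I-\Pi_{L^2}^K)\textbf{Y}$ with $\textbf{Y}:=\textbf{M}\nabla_X\hat u$; bounding $\int_E\{\!\{\hat{\textbf{E}}_p\}\!\}^2$ by the two adjacent traces of $\hat{\textbf{E}}_p$ reduces \eqref{ineq:c} to estimating $\sum_K h_K\|\hat{\textbf{E}}_p\|_{L^2(\partial K)}^2$. On a fixed $K$, pick the comparison polynomial $\textbf{r}:=\textbf{M}_1\,\nabla_X(\Pi_{L^2}^K\hat u)\in\mathcal{S}_p(K)^2$, where $\textbf{M}_1$ is the first-order Taylor polynomial of the smooth field $\textbf{M}$ at the centroid of $K$ (legitimate for $1\le p\le 2$). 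Then $\hat{\textbf{E}}_p=(I-\Pi_{L^2}^K)(\textbf{Y}-\textbf{r})$, so by \eqref{ppt:ite} $\|\hat{\textbf{E}}_p\|_{L^2(\partial K)}\le\|\textbf{Y}-\textbf{r}\|_{L^2(\partial K)}+C_T^{1/2}h_K^{-1/2}\|\textbf{Y}-\textbf{r}\|_{L^2(K)}$, and one estimates both norms of $\textbf{Y}-\textbf{r}$ through the decomposition
\[
\textbf{Y}-\textbf{r}=\textbf{M}\,\nabla_X\bigl(\hat u-\Pi_{L^2}^K\hat u\bigr)+(\textbf{M}-\textbf{M}_1)\,\nabla_X\bigl(\Pi_{L^2}^K\hat u\bigr).
\]
For the first summand one pulls out only $\|\textbf{M}\|_{L^\infty(K)}$ and applies a scaled trace inequality to the $H^1$-function $\nabla_X(\hat u-\Pi_{L^2}^K\hat u)$, using $\|\nabla_X(\hat u-\Pi_{L^2}^K\hat u)\|_{L^2(K)}\lesssim h_K|\hat u|_{H^2(K)}$ and $|\nabla_X(\hat u-\Pi_{L^2}^K\hat u)|_{H^1(K)}\lesssim|\hat u|_{H^2(K)}$ (the latter valid for $p\le2$); after summation this gives the $e^{C_1(t)}|\hat u|_{H^2}$ term, since $\|\textbf{M}\|_{L^\infty}^2=\lambda_{\max}(J\textbf{F}^{-1}\textbf{F}^{-T})$ is bounded by $e^{2C_1(t)}$ via a Gr\"onwall estimate on $\textbf{F}^T\textbf{F}$. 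For the second summand one uses the Taylor remainder bound $\|\textbf{M}-\textbf{M}_1\|_{L^\infty(K)}\lesssim h_K^2\|\textbf{M}\|_{W^{2,\infty}(K)}$ together with \eqref{ppt:ie} and \eqref{ppt:ite} applied to the polynomial $\nabla_X(\Pi_{L^2}^K\hat u)$, so that $\|\nabla_X(\Pi_{L^2}^K\hat u)\|_{L^2(K)}\lesssim h_K^{-1}\|\hat u\|_{L^2(K)}$ and $\|\nabla_X(\Pi_{L^2}^K\hat u)\|_{L^2(\partial K)}\lesssim h_K^{-3/2}\|\hat u\|_{L^2(K)}$: the two extra powers of $h_K$ from the Taylor remainder are exactly consumed, leaving the low-order norm $\|\hat u\|_{L^2(K)}$ and producing the $e^{C_2(t)}\|\hat u\|_{L^2}$ term, with $\|\textbf{M}\|_{W^{2,\infty}}\lesssim e^{C_2(t)}$ furnished by the Gr\"onwall bounds on $\textbf{F},\partial_X\textbf{F},\partial_X^2\textbf{F}$, i.e.\ exactly the estimates of Remark~\ref{rq:velocity}. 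Summing over $\mathcal{T}_h$ and inserting these into $\sum_K h_K\|\hat{\textbf{E}}_p\|_{L^2(\partial K)}^2$ gives \eqref{ineq:c}.

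I expect the delicate point to be \eqref{ineq:c}: the decomposition must be organised so that the high-order map regularity (three derivatives of $\tilde{\textbf{V}}$, entering through $\|\textbf{M}\|_{W^{2,\infty}}$ and hence $C_2(t)$) always meets the low-order norm $\|\hat u\|_{L^2}$, while $|\hat u|_{H^2}$ is never multiplied by anything worse than the bare $L^\infty$-size of the metric $\textbf{M}$ (that is, $e^{C_1(t)}$). This is precisely what forces the choice $\textbf{r}=\textbf{M}_1\nabla_X(\Pi_{L^2}^K\hat u)$, the use of the Taylor remainder on $\textbf{M}-\textbf{M}_1$, and the systematic trade of regularity of $\hat u$ for negative powers of $h_K$ via \eqref{ppt:ie}; once this bookkeeping is in place and Remark~\ref{rq:velocity} is invoked, the rest --- the weighted-projection approximation theory for \eqref{ineq:a}--\eqref{ineq:b}, the scaled trace inequalities, and the time-transfer via \eqref{ineq:Unorm}--\eqref{ineq:jtime} --- is routine. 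The estimates hold for each fixed $t\in[0,T]$, which is all that is needed before inserting them into Theorem~\ref{thm:apriori}.
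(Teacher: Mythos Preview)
Your argument is correct and reaches the stated bounds, but the route you take for \eqref{ineq:c} is genuinely different from the paper's. The paper does \emph{not} Taylor-expand the metric $\textbf{M}=J^{1/2}\textbf{F}^{-T}$; instead it inserts the degree-one $L^2$-projection $\Pi_1\hat u$ and splits
\[
\hat{\textbf{E}}_p=\underbrace{\textbf{M}\nabla_X(\hat u-\Pi_1\hat u)}_{\textbf{A}_1}
+\underbrace{(I-\Pi_{L^2})\bigl(\textbf{M}\nabla_X\Pi_1\hat u\bigr)}_{\textbf{A}_2}
+\underbrace{\Pi_{L^2}\bigl(\textbf{M}\nabla_X(\Pi_1\hat u-\hat u)\bigr)}_{\textbf{A}_3}.
\]
The point of $\Pi_1$ is that $\nabla_X\Pi_1\hat u|_K=\rho_K\textbf{q}_K$ is a \emph{constant} vector per element, so $\textbf{A}_2|_K=\rho_K(I-\Pi_{L^2})(\textbf{M}\textbf{q}_K)$ and the projection error is read directly off the $H^2$-seminorm bound $|\textbf{M}\textbf{q}_K|_{H^2(K)}^2\le |K|\,e^{2C_2(t)}$ of Remark~\ref{rq:velocity}; meanwhile $\rho_K\lesssim h_K^{-2}\|\hat u\|_{L^2(K)}$ by the inverse inequality. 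Your approach replaces $\Pi_1$ by the full-degree projection and instead isolates the map regularity through a Taylor polynomial $\textbf{M}_1$ of $\textbf{M}$, using $\|\textbf{M}-\textbf{M}_1\|_{L^\infty(K)}\lesssim h_K^2\|\textbf{M}\|_{W^{2,\infty}}$; this requires the pointwise bound $\|\textbf{M}\|_{W^{2,\infty}}\lesssim e^{C_2(t)}$, which is implicit in the Gr\"onwall arguments of the appendix rather than the integral form literally stated in Remark~\ref{rq:velocity}. Both decompositions achieve the key bookkeeping you identify (high-order map regularity $C_2$ meets $\|\hat u\|_{L^2}$, never $|\hat u|_{H^2}$); the paper's version is tailored to quote Remark~\ref{rq:velocity} verbatim, while yours is arguably cleaner and does not rely on the piecewise-constant gradient trick. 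For \eqref{ineq:a}--\eqref{ineq:b} there is a smaller difference: the paper routes everything through the time-zero projector $P_h(0)\hat u$ (using best approximation of $P_h(t)$ in $H(t)$ and the inverse inequality on the polynomial $P_h(0)\hat u-P_h(t)\hat u$), whereas you argue directly from $L^2$-stability of the weighted projection $P_h(t)$ up to $e^{cC_0(t)}$; the resulting exponents on $C_0,C_1$ may differ by harmless constants.
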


\begin{proof}
Let $t>0$, in this proof we take a snapshot of $\hat{u}$, namely when we write $\hat{u}$ we mean $\hat{u}(t,\cdot)\in H^2(\hat{\Omega})$.

\noindent \textbf{Proof of \eqref{ineq:a}:} See \cite{dolesji15} \textbf{Lemma 2.24}, resp. \textbf{2.25} state that: 

\begin{equation*}
|\hat{u}- P_h(0)\hat{u}|_{U(0)} \lesssim h |\hat{u}|_{H^2(\hat{\Omega})}, \mbox{ resp. } j_h(0;\hat{u}- P_h(0)\hat{u},\hat{u}- P_h(0)\hat{u}) \lesssim h^2 |\hat{u}|^2_{H^2(\hat{\Omega})}
\end{equation*}
    
\noindent thus along with \eqref{ineq:Unorm}-\eqref{ineq:jtime}: $\hat{N}\!(t;\hat{u}- P_h\!(0)\hat{u}) \! \lesssim \! e^{C_1\!(t)} \! \hat{N}\!(0;\hat{u}- P_h\!(0)\hat{u}) \! \lesssim \! e^{C_1\!(t)} \! h  \vert \hat{u}\vert _{H^2(\hat{\Omega})}$.

\begin{align*}
\mbox{Also by \eqref{ppt:ie}:}|P_h(0)\hat{u}- P_h(t)\hat{u}|_{U(0)} & \!\leq\! \frac{1}{h}C_I  \lVert P_h(t)\hat{u}- P_h(0)\hat{u} \rVert _{H(0)} \\
& \!\leq\! \frac{1}{h}C_Ie^{C_0(t)} ( \lVert P_h(t)\hat{u} \!-\! \hat{u} \rVert _{H(t)} \! +\!   \lVert P_h(0)\hat{u}\! -\! \hat{u} \rVert _{H(t)}) \\
& \!\leq\! 2\frac{1}{h}C_Ie^{2C_0(t)}  \lVert P_h(0)\hat{u}- \hat{u} \rVert _{H(0)} 
\end{align*}

\begin{equation*}
\mbox{and by \eqref{ppt:ite}: }j_h(0;P_h(0)\hat{u}- P_h(t)\hat{u},P_h(0)\hat{u}- P_h(t)\hat{u})  \leq 2\frac{C_T}{h^2}  \lVert P_h(t)\hat{u}- P_h(0)\hat{u} \rVert ^2_{H(0)}.
\end{equation*}

\begin{align*}
\mbox{Therefore: } N(t,e_p)  & \leq \hat{N}(t,\hat{u}- P_h(0)\hat{u}) + \hat{N}(t,P_h(t)\hat{u}- P_h(0)\hat{u}) \\ 
& \lesssim e^{C_1(t)} h |\hat{u}|_{H^2(\hat{\Omega})} + e^{C_1(t)}\hat{N}(0,P_h(t)\hat{u}- P_h(0)\hat{u}) \\
& \lesssim e^{C_1(t)} h |\hat{u}|_{H^2(\hat{\Omega})} + \frac{1}{h}e^{2C_0(t)+C_1(t)} \lVert P_h(t)\hat{u}- P_h(0)\hat{u} \rVert _{H(0)} \\
& \lesssim e^{2C_0(t)+C_1(t)} h |\hat{u}|_{H^2(\hat{\Omega})}
\end{align*}
 
\begin{equation*}
\mbox{\textbf{Proof of \eqref{ineq:b}:} }\lVert \hat{e}_p \rVert\! _{H\!(t)} \! \leq \! \lVert \hat{u}-P_h(0)\hat{u} \rVert\! _{H\!(t)} \! \leq \! e^{C_0\!(t)} \! \lVert \hat{u}-P_h\! (0)\hat{u} \rVert \! _{H\!(0)}\! \lesssim\! e^{C_0\!(t)} \! h^2 |\hat{u}| \! _{H^2(\hat{\Omega})}.
\end{equation*}

\noindent \textbf{Proof of \eqref{ineq:c}:} Let $\Pi_{1}$ the $L^2$-projection on the DG space with polynomial degree 1. Then

\begin{multline*}
\hat{\textbf{E}}_p = \underset{\textbf{A}_1}{\underbrace{J^\frac{1}{2}\textbf{F}^{-T} \nabla_X (\hat{u}-\Pi_1\hat{u})}} + \underset{\textbf{A}_2}{\underbrace{J^\frac{1}{2}\textbf{F}^{-T}\nabla_X\Pi_1\hat{u}-\Pi_{L^2}(J^\frac{1}{2}\textbf{F}^{-T}\nabla_X\Pi_1\hat{u})}} \\ 
+ \underset{\textbf{A}_3}{\underbrace{\Pi_{L^2}(J^\frac{1}{2}\textbf{F}^{-T}\nabla_X(\Pi_1\hat{u}-\hat{u}))}}
\end{multline*}

\noindent By \textbf{Lemma 2.25} in \cite{dolesji15}: $\hat{B}(\textbf{A}_1)\leq e^{C_1(t)} \hat{B}(\nabla_X (\hat{u}-\Pi_1\hat{u})) \lesssim  e^{C_1(t)} h|\hat{u}|_{H^2(\hat{\Omega})}$.

\noindent Additionally $\nabla_X\Pi_1\hat{u}$ is constant over every triangle, we write $\nabla_X\Pi_1\hat{u} = \rho_K \textbf{q}_K$ with $\rho_K>0$ and $ \lVert \textbf{q}_K \rVert =1$, implying
    
\begin{equation*}
\rho_K^2 h_K^2 \approx \lVert \nabla_X\Pi_1\hat{u} \rVert ^2_{L^2(K)} \lesssim \frac{1}{h_K^2} \lVert \Pi_1\hat{u} \rVert ^2_{L^2(K)^2} \lesssim \frac{1}{h_K^2}  \lVert \hat{u} \rVert ^2_{L^2(K)}
\end{equation*}
    
\noindent By \textbf{Lemma 2.25} in \cite{dolesji15} and \eqref{ineq:Y3}:

\begin{equation*}
\hat{B}(\textbf{A}_2)^2  \lesssim  \underset{K}{\sum} \lVert \hat{u} \rVert ^2_{L^2(K)} |J^\frac{1}{2}\textbf{F}^{-T}\textbf{q}_K|^2_{H^2(K)} \lesssim  e^{2C_2(t)} h^2  \lVert \hat{u} \rVert _{L^2(\hat{\Omega})}^2
\end{equation*}

\noindent By \eqref{ppt:ite}, since $A_3$ is piecewise polynomial and by \eqref{ppt:L2}

\begin{equation*}
\hat{B}(\textbf{A}_3)^2 
 \lesssim  \underset{K}{\sum}\int_{K} \textbf{A}_3^2 \lesssim \underset{K}{\sum}\int_{K} \textbf{A}_1^2  \lesssim e^{2C_1(t)} |\Pi_1\hat{u}-\hat{u}|^2_{U(0)} \lesssim e^{2C_1(t)} h^2 |\hat{u}|^2_{H^2(\hat{\Omega})}
\end{equation*}

\noindent Adding the three contributions $\hat{B}(\textbf{A}_1)$, $\hat{B}(\textbf{A}_2)$, $\hat{B}(\textbf{A}_3)$ yields to \eqref{ineq:c}.

\end{proof}

\begin{remark}\label{rq:apriori}
    The combination of theorem \ref{thm:apriori} and lemma \ref{lem:H2} gives for $1\leq p\leq 2$: 

\begin{equation*}
     \lVert e \rVert _{L^\infty(0,T;L^2(\Omega))}\leq \Tilde{C} h^{p-1}(\sqrt{\varepsilon}+h),
\end{equation*}

\noindent $\Tilde{C}$ depending on $|\hat{u}|_{H^2_1(\hat{\mathcal{Q}}^T)}$, $\lVert \! \Tilde{\textbf{V}} \rVert _{W^{3,\infty}_1(\mathcal{Q}^T)^2}$ and $\frac{1}{\varepsilon} \lVert \! \textbf{V}-\Tilde{\textbf{V}} \rVert ^2_{L^\infty_2(\mathcal{Q}^T)^2}$.
The error is decomposed into three distinct components: the local error in $\hat{u}$, the error component influenced by the mesh movement $\Tilde{\textbf{V}}$ and the error from the deviation from the mean flow $\textbf{V}-\Tilde{\textbf{V}}$. It proves that the mesh velocity $\tilde{\textbf{V}}$ has to be a smoothed approximation of the flow velocity $\textbf{V}$ such that both $\lVert \tilde{\textbf{V}}\rVert_{W^{3,\infty}(\Omega)^2}$ and $\frac{1}{\varepsilon} {\lVert \textbf{V}-\tilde{\textbf{V}}\rVert^2_{W^{0,\infty}(\Omega)^2}} $ are bounded.

This estimate also underlines a second order convergence when the remaining advection speed is 0. This is a higher order than the one proven in \cite{dolesji15}. However when the remaining advection speed is not reduced enough, the half less order of convergence from \cite{dolesji15} stands. 

The dependence in $\lVert \! \Tilde{\textbf{V}} \rVert _{W^{3,\infty}_1(\mathcal{Q}^T)^2}$ also implies that even if the scheme does not theoretically needs to keep $T$ small for stability or convergence (with, for instance, a remeshing step), it is still optimal to do so for precision needs.
\end{remark}

\subsection[An \textit{a posteriori} error estimate]{An \textit{a posteriori} error estimate for the semi-discrete formulation}

\noindent This subsection deals with an \textit{a posteriori} approach to the semi-discretisation. Theorem \ref{thm:apstsp-ti} concludes to the reliability of the local spatial criteria defined in \eqref{stestim}. This subsection is then followed by the proof of theorem \ref{thm:apststd} that states the reliability in space of the weighted criterion \eqref{west}.

This \textit{a posteriori} error criteria hold only when $h_E \ll \varepsilon$ (see \cite{schotzau09}).

\noindent In this section $\hat{u}$ is the solution of \eqref{weakform:ctn} and $\hat{u}_h$ the solution of \eqref{weakform:discr}. Finally we define $\hat{u}^s \colon [0;T] \to H_D^1(\hat{\Omega})$ the pointwise solution of the space-discrete problem:

\begin{equation}\label{eq:us}
\forall \hat{v} \in H_D^1(\hat{\Omega}), \;  a(t;\hat{u}^s(t),\hat{v})=l(\hat{v})-\int_{\hat{\Omega}} J\frac{\partial \hat{u}_h}{\partial t}\hat{v}
\end{equation}

\noindent and suppose $\hat{u}^s(t,\cdot) \in H^2(\hat{\Omega})$. Since $\chi(t,\cdot) \in C^2(\hat{\Omega},\Omega)$ then $\textbf{F}\in C^1(\hat{\Omega},\mathbb{R}^{2\times 2})$ and the regularity theorems from \cite{quarteroni94} can be used.

We define $\hat{e}=\hat{u}-\hat{u}_h=\hat{\rho}+\hat{\sigma}$ with $\hat{\rho}= \hat{u}-\hat{u}^s$ and $\hat{\sigma} = \hat{u}^s-\hat{u}_h$. The final estimate \eqref{estim} is based on theorem \ref{thm:apststd} and lemma \ref{lem:jump} and is proven in section 4.3. Following the definitions \eqref{elts}, let $\rho_S \! =\! \underset{S}{min}(\frac{a_\infty^{\omega_S}h_S}{\sqrt{\varepsilon}},\!\frac{M_\infty^{\omega_S}}{\beta})$ and

\begin{align*}\eta^{t\;2}_{J_K} & = \frac{1}{2} \underset{E\in \mathcal{E}^{\mbox{\tiny int}}\cap K}{\sum} \beta h_E J_1^{\omega_E} \int_E [\![ \hat{u}_h ]\!]^2 + (\frac{\delta_\infty^{\omega_E}}{\varepsilon} h_E J_\infty^{\omega_E}+a_1^{\omega_E} \frac{\varepsilon\alpha }{h_E} )a_\infty^{\omega_E} \int_E J(\textbf{F}^{-T} [\![ \hat{u}_h ]\!] )^2 \\
&  + \underset{E\in \mathcal{E}_D^{\mbox{\tiny ext}}\cap K}{\sum} \beta h_E J_1^{\omega_E} \int_E [\![ \hat{u}_h ]\!]^2 + (\frac{\delta_\infty^{\omega_E}}{\varepsilon} h_E J_\infty^{\omega_E}+a_1^{\omega_E} \frac{\varepsilon\alpha }{h_E} ) a_\infty^{\omega_E} \int_E J(\textbf{F}^{-T} [\![ \hat{u}_h ]\!] )^2 \\
\eta^{t\;2}_{E_K} & \!= \frac{1}{2} \underset{E \in \mathcal{E}^{\mbox{\tiny int}}\cap K}{\sum} \hspace{-3mm} \rho_E \sqrt{\frac{a_\infty^{\omega_E}}{\varepsilon}} \int_E (\varepsilon J\textbf{F}^{-1}\textbf{F}^{-T} [\![ \nabla\! _X \hat{u}_h ]\!] )^2 \\
& + \underset{E\in \mathcal{E}_N^{\mbox{\tiny ext}}\cap K}{\sum}\hspace{-3mm} \rho_E \sqrt{\frac{a_\infty^{\omega_E}}{\varepsilon}}  \int_E (\hat{u}_N \! - \! \varepsilon J\textbf{F}^{-1}\textbf{F}^{-T} \nabla\! _X \hat{u}_h \cdot \textbf{n})^2 \\
\eta^{t\;2}_{R_K} & = \rho_K^2  \lVert J\hat{f} \! - \! J\frac{\partial \hat{u}_h}{\partial t}  \! + \! \varepsilon  \nabla\! _X \!\cdot\!  \{J \textbf{F}^{-1} \textbf{F}^{-T} \nabla _X \hat{u}_h\}\!-\! J (\textbf{V} \! - \! \tilde{\textbf{V}})\! \cdot \! \textbf{F}^{-T}\nabla _X \hat{u}_h\rVert^2_{L^2(K)} 
\end{align*}

\begin{equation}\label{west}
    \eta^{t\;2}_K  = (\eta^{t\;2}_{J_K} + \eta^{t\;2}_{R_K} + \eta^{t\;2}_{E_K})
\end{equation}

\noindent With these criteria we obtain a way to measure the deviation from the mean flow $\Tilde{\textbf{V}}$: the error is bounded by the criteria composed of the residuals weighted by the physical properties of the flow map ($a$, $J$ and $M$). In particular $\eta_J$ and $\eta_E$ measure the deviation to a solution $\hat{u}^s \in H^2(\hat{\Omega})$ and $\eta_R$ measures the deviation to the mean flow. Taking the physical properties of the flow map, allows us to measure the effect of potential mesh entanglement and modification of the mesh quality.

\noindent The following theorem states the reliability of the criteria.

\begin{theorem}\label{thm:apststd} Let $T>0$, $\hat{u}_h$ be the solution of \eqref{weakform:discr} and $\hat{u}^s\colon [0;T] \to H_D^1(\hat{\Omega})$ be the pointwise solution of \eqref{eq:us}, such that $\forall t\in [0,T]\; \hat{u}^s(t) \in H^2(\hat{\Omega})$, let $\hat{\sigma} = \hat{u}^s-\hat{u}_h$ and $\eta^{t}_K$ be defined in \eqref{west} and $\alpha>0$ the stability constant in the definition of $j_h$. Then $\forall t\in [0,T]$
\begin{equation}
    (|||\hat{\sigma}|||+|\hat{\sigma}|_A)^2 \lesssim \underset{K \in \mathcal{T}_h}{\sum}  (1+\frac{1}{\alpha}) \eta^{t\;2}_K
\end{equation}
\end{theorem}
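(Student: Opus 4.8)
The plan is to run an \emph{elliptic reconstruction} argument: $\hat{u}^s$ is a conforming, metric-elliptic reconstruction of $\hat{u}_h$, and one bounds the fully computable deviation $\hat{\sigma}=\hat{u}^s-\hat{u}_h$ by a residual technique carried out \emph{entirely in the time-$t$ metric}, so that every interpolation, trace and inverse constant that has to be transported through the deforming mesh contributes only the pointwise factors $J$, $a$ (the maximal eigenvalue of $J\textbf{F}^{-1}\textbf{F}^{-T}$) and $M=1/J$ --- precisely the averages $J_1^{\omega}$, $J_\infty^{\omega}$, $a_1^{\omega}$, $a_\infty^{\omega}$, $M_\infty^{\omega}$ appearing in \eqref{west}. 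First, since $\hat{u}^s(t,\cdot)\in H^2(\hat{\Omega})\subset H^1_D(\hat{\Omega})$ is single-valued, $[\![\hat{\sigma}]\!]=-[\![\hat{u}_h]\!]$ on every edge of $\mathcal{E}_D$, so the jump-dependent pieces of the target norm --- $\varepsilon j_h(\hat{\sigma},\hat{\sigma})$ inside $|||\hat{\sigma}|||^2$ and $\sum_{E\in\mathcal{E}_D}(\beta+\delta_\infty^{\omega_E}/\varepsilon)h_EJ_1^{\omega_E}\int_E[\![\hat{\sigma}]\!]^2$ inside $|\hat{\sigma}|_A^2$ --- are the corresponding quantities formed with $\hat{u}_h$ and, using $h_E\gg\varepsilon$ and $a\geq1$, are already $\lesssim\sum_K\eta^{t\;2}_{J_K}$. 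It remains to control the ``volume'' contributions $\sum_K[\varepsilon|\hat{\sigma}|^2_{U_K}+\beta\lVert\hat{\sigma}\rVert^2_{H_K}]$ and the Helmholtz seminorm $|(\textbf{V}-\tilde{\textbf{V}})\hat{\sigma}|_*^2$.

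For those I would split $\hat{\sigma}=\hat{\sigma}^c+\hat{\sigma}^r$ with $\hat{\sigma}^c:=\hat{u}^s-\mathcal{A}_h\hat{u}_h\in H^1_D(\hat{\Omega})$ and $\hat{\sigma}^r:=\mathcal{A}_h\hat{u}_h-\hat{u}_h\in V_h$, where $\mathcal{A}_h\colon V_h\to V_h^c$ is a nodal-averaging (Oswald-type) operator. Its standard bounds $\lVert\hat{\sigma}^r\rVert^2_{L^2(K)}+h_K^2\lVert\nabla_X\hat{\sigma}^r\rVert^2_{L^2(K)}\lesssim\sum_{E\subset\partial K}h_E\lVert[\![\hat{u}_h]\!]\rVert^2_{L^2(E)}$, transferred to the metric norms, give $|||\hat{\sigma}^r|||^2$ (whose $j_h$-part is literally $\varepsilon j_h(\hat{u}_h,\hat{u}_h)$) $\lesssim(1+\tfrac1\alpha)\sum_K\eta^{t\;2}_{J_K}$, the factor $\tfrac1\alpha$ arising from writing $\varepsilon h_E^{-1}\lVert[\![\hat{u}_h]\!]\rVert^2=\alpha^{-1}\cdot\alpha\varepsilon h_E^{-1}\lVert[\![\hat{u}_h]\!]\rVert^2$; and, via $|(\textbf{V}-\tilde{\textbf{V}})\hat{\sigma}^r|_*\leq\sqrt{\delta_\infty/\varepsilon}\,\lVert\hat{\sigma}^r\rVert_{L^2(\hat{\Omega})}$ together with $h_E\gg\varepsilon$, also $|\hat{\sigma}^r|_A^2\lesssim(1+\tfrac1\alpha)\sum_K\eta^{t\;2}_{J_K}$.

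The conforming part is handled by the inf--sup Lemma \ref{lem:infsup} and the coercivity Lemma \ref{lem:coercivity}, which reduce the bound on $|||\hat{\sigma}^c|||+|(\textbf{V}-\tilde{\textbf{V}})\hat{\sigma}^c|_*$ to estimating $\tilde{a}_h(\hat{\sigma}^c,\hat{w})$ for a few $\hat{w}\in H^1_D(\hat{\Omega})$ with $|||\hat{w}|||=1$. Since $\hat{u}^s$ and $\hat{w}$ are single-valued, $\tilde{a}_h(\hat{u}^s,\hat{w})=a(t;\hat{u}^s,\hat{w})$, so \eqref{eq:us} gives $\tilde{a}_h(\hat{\sigma}^c,\hat{w})=R(\hat{w})-\tilde{a}_h(\hat{\sigma}^r,\hat{w})$ with the computable residual $R(\hat{w}):=l(\hat{w})-\int_{\hat{\Omega}}J\tfrac{\partial\hat{u}_h}{\partial t}\hat{w}-\tilde{a}_h(\hat{u}_h,\hat{w})$; the second term is $\lesssim(|||\hat{\sigma}^r|||+|(\textbf{V}-\tilde{\textbf{V}})\hat{\sigma}^r|_*)|||\hat{w}|||$ by Lemma \ref{lem:continuity}, hence already under control. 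Inserting a Scott--Zhang quasi-interpolant $\hat{w}_h\in V_h^c$ of $\hat{w}$, I would write $R(\hat{w})=R(\hat{w}-\hat{w}_h)+R(\hat{w}_h)$: using \eqref{weakform:discr} on $V_h^c\subset V_h$, the identity $l\equiv l_h$ on $V_h^c$ and \eqref{bilin:Ahint}, one gets $R(\hat{w}_h)=-\tilde{p}_h(\hat{u}_h,\hat{w}_h)$, which by \eqref{ineq:51} and Cauchy--Schwarz is $\lesssim\alpha^{-1/2}(\sum_K\eta^{t\;2}_{J_K})^{1/2}|||\hat{w}_h|||$; and an element-wise integration by parts of $\tilde{a}_h(\hat{u}_h,\cdot)$ against the single-valued $\hat{w}-\hat{w}_h$ (as in the proof of Lemma \ref{lem:inconsistency}) turns $R(\hat{w}-\hat{w}_h)$ into the interior residual $J\hat{f}-J\tfrac{\partial\hat{u}_h}{\partial t}+\varepsilon\nabla_X\!\cdot\!\{J\textbf{F}^{-1}\textbf{F}^{-T}\nabla_X\hat{u}_h\}-J(\textbf{V}-\tilde{\textbf{V}})\cdot\textbf{F}^{-T}\nabla_X\hat{u}_h$ tested against $\hat{w}-\hat{w}_h$ on each $K$, plus the interior diffusive flux jumps $[\![\varepsilon J\textbf{F}^{-1}\textbf{F}^{-T}\nabla_X\hat{u}_h]\!]$ and the Neumann residual $\hat{u}_N-\varepsilon J\textbf{F}^{-1}\textbf{F}^{-T}\nabla_X\hat{u}_h\cdot\textbf{n}$; Cauchy--Schwarz then produces $\eta_{R_K}$ and $\eta_{E_K}$, the remaining $\eta_{J_K}$ coming from $R(\hat{w}_h)$ and the jump contributions isolated in the first paragraph.

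The main obstacle is keeping the constants \emph{robust} ($\varepsilon$-uniform) in this last step. The weights $\rho_K=\min(a_\infty^{\omega_K}h_K/\sqrt{\varepsilon},\,M_\infty^{\omega_K}/\beta)$ and $\rho_E=\min(a_\infty^{\omega_E}h_E/\sqrt{\varepsilon},\,M_\infty^{\omega_E}/\beta)$ are forced by bounding $\lVert\hat{w}-\hat{w}_h\rVert_{L^2(K)}$ and its edge traces in two competing ways --- by $h_K\lVert\nabla_X\hat{w}\rVert$ combined with $\lVert\nabla_X\hat{w}\rVert^2_{L^2(K)}\leq a_\infty^{\omega_K}\varepsilon^{-1}|||\hat{w}|||^2$ (the diffusion scaling, weight $\sim a_\infty h_K/\sqrt{\varepsilon}$), or by $\lVert\hat{w}\rVert_{L^2(\omega_K)}\leq\sqrt{M_\infty^{\omega_K}}\,\lVert\hat{w}\rVert_{H(t)}\leq\sqrt{M_\infty^{\omega_K}/\beta}\,|||\hat{w}|||$ (the reaction scaling, weight $\sim\sqrt{M_\infty/\beta}$) --- and retaining the smaller one locally, exactly as in the static-mesh estimates of \cite{schotzau09}, now with the metric converting $\nabla$-norms, $L^2$-norms and edge traces through $a$, $J$, $M$. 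Assembling the three contributions, using $|||\hat{w}_h|||\lesssim|||\hat{w}|||$, dividing by $|||\hat{w}|||=1$, applying $(x+y)^2\le2x^2+2y^2$ to $(|||\hat{\sigma}|||+|\hat{\sigma}|_A)^2$ and tracking the $\alpha^{-1}$ and $\alpha^{-1/2}$ factors above yields $(|||\hat{\sigma}|||+|\hat{\sigma}|_A)^2\lesssim\sum_{K}(1+\tfrac1\alpha)\eta^{t\;2}_K$.
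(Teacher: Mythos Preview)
Your proposal is correct and follows essentially the same route as the paper's Section~\ref{ssec:proof}: split $\hat{u}_h$ via an Oswald-type averaging operator into a conforming part and a remainder controlled by $\eta_J$ (Lemmas~\ref{lem3}--\ref{lem4}), then handle the conforming error through the inf--sup Lemma~\ref{lem:infsup} and a residual argument with a quasi-interpolant (the paper uses a metric-weighted Cl\'ement operator $\mathcal{I}_h$, Lemma~\ref{lem5}, where you invoke Scott--Zhang), recovering $\eta_R$, $\eta_E$ and a further $\alpha^{-1/2}\eta_J$ contribution from $\tilde{p}_h(\hat{u}_h,\cdot)$ on $V_h^c$ (Lemma~\ref{lem2}). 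The only point you glossed over is that the elementwise integration by parts of $\tilde{a}_h(\hat{u}_h,\hat{w}-\hat{w}_h)$ also leaves the upwind convective edge term $\sum_K\int_{\partial K_{\text{in}}^t\setminus\Gamma_r}J(\textbf{V}-\tilde{\textbf{V}})\cdot\textbf{F}^{-T}[\![\hat{u}_h]\!](\hat{w}-\hat{w}_h)$ (the paper's $T_3$ in Lemma~\ref{lem6}), which is likewise bounded by $(\sum_K\eta_{J_K}^{t\,2})^{1/2}|||\hat{w}|||$ via \eqref{I2}.
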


\begin{proof}
    The proof is the subject of subsection \ref{ssec:proof} (see lemma \ref{lemthm1}).
\end{proof}

\noindent In the following lemma, we use the projection operator $\mathcal{A}_h$ defined in lemma \ref{lem3}.

\begin{lemma}\label{lem:jump}
Let $\hat{u}_h^r=\hat{u}_h -\mathcal{A}_h \hat{u}_h$. For all $t \in [0,T]$ 
\begin{align*}
      |||\hat{u}_h^r|||^2 + |\hat{u}_h^r|_A^2 \lesssim & \underset{K \in \mathcal{T}_h}{\sum} [\frac{1}{\alpha} + 1] \eta^{t\;2}_{J_K}  \\
      \lVert \hat{u}_h^r \rVert _{H(t)}^2 \lesssim & \underset{E \in \mathcal{E}_D}{\sum} h_E J_1^{\omega_E} \int_E  [\![ \hat{u}_h ]\!] ^2\\
      \lVert \frac{\partial \hat{u}_h^r}{\partial t} \rVert _{H(t)}^2 \lesssim & \underset{E \in \mathcal{E}_D}{\sum} h_E J_1^{\omega_E} \int_E  (\frac{\partial[\![  \hat{u}_h ]\!]}{\partial t}) ^2
\end{align*}
\end{lemma}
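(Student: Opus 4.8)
The plan is to import the standard conforming‑reconstruction estimates for $\mathcal{A}_h$ from lemma \ref{lem3} and then re‑weight them into the $J$‑, $\textbf{F}$‑dependent norms $\lVert\cdot\rVert_{H_K}$, $|\cdot|_{U_K}$, $j_h$, $|\cdot|_{*}$ that build $|||\cdot|||$ and $|\cdot|_A$. First I would use that $\mathcal{A}_h$ maps $V_h$ into the conforming space $V_h^c=V_h\cap H^1_D(\hat{\Omega})$, so $\mathcal{A}_h\hat{u}_h$ has no interior jumps and vanishes on $\Gamma_D$; hence $[\![\hat{u}_h^r]\!]=[\![\hat{u}_h]\!]$ on every $E\in\mathcal{E}_D$, and the jump parts of $|||\hat{u}_h^r|||^2$ and of $|\hat{u}_h^r|_A^2$ are literally the jump quantities of $\hat{u}_h$ already present in $\eta^{t\;2}_{J_K}$, once $\int_E J(\textbf{F}^{-T}[\![\hat{u}_h]\!])^2$ is compared with $\int_E[\![\hat{u}_h]\!]^2$ via the two‑sided eigenvalue bound for $J\textbf{F}^{-1}\textbf{F}^{-T}$ (valid because $\det(J\textbf{F}^{-1}\textbf{F}^{-T})=1$, so its smallest eigenvalue is $\geq 1/a$ and $a\geq1$). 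From lemma \ref{lem3} I would retain only the elementwise bounds $\lVert\hat{u}_h^r\rVert_{L^2(K)}^2\lesssim\sum_E h_E\int_E[\![\hat{u}_h]\!]^2$ and $|\hat{u}_h^r|_{H^1(K)}^2\lesssim\sum_E h_E^{-1}\int_E[\![\hat{u}_h]\!]^2$, the sums running over the edges of $\mathcal{E}_D$ in the patch of $K$. Throughout, the $C^1$‑regularity of $\chi$ keeps $J$ and $a$ two‑sidedly bounded and gives $J_\infty^{\omega_E}\approx J_1^{\omega_E}$, $a_\infty^{\omega_E}\approx a_1^{\omega_E}$ up to constants, while assumptions (i)--(ii) on $\mathcal{T}_h$ let me reorganise $\sum_K\sum_{E\subset\partial\omega_K}(\cdot)_E$ into $\sum_E(\cdot)_E$ (finite overlap, $h_K\approx h_E$).

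For the first inequality I would split $|||\hat{u}_h^r|||^2+|\hat{u}_h^r|_A^2$ into the volume terms $\varepsilon|\hat{u}_h^r|_{U_K}^2$ and $\beta\lVert\hat{u}_h^r\rVert_{H_K}^2$, the penalty term $\varepsilon j_h(\hat{u}_h^r,\hat{u}_h^r)$, the edge part of $|\hat{u}_h^r|_A^2$, and the dual‑norm term $|(\textbf{V}-\tilde{\textbf{V}})\hat{u}_h^r|_{*}^2$. The penalty term and the edge part, by the jump identity, transfer directly into $\sum_K\eta^{t\;2}_{J_K}$ with an $O(1)$ constant. For $\beta\lVert\hat{u}_h^r\rVert_{H_K}^2\leq\beta J_\infty^K\lVert\hat{u}_h^r\rVert_{L^2(K)}^2$ I would insert the $L^2(K)$‑bound and reorganise to land in the $\beta h_E J_1^{\omega_E}\int_E[\![\hat{u}_h]\!]^2$ term of $\eta^{t\;2}_{J_K}$. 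For the dual‑norm term I would test the definition \eqref{norm:helm}, apply Cauchy--Schwarz elementwise with $\delta=(\textbf{V}-\tilde{\textbf{V}})^2$ from \eqref{elts}, and recognise $\sum_K\varepsilon|\hat{v}|_{U_K}^2\leq|||\hat{v}|||^2$, getting $|(\textbf{V}-\tilde{\textbf{V}})\hat{u}_h^r|_{*}^2\leq\sum_K\tfrac{\delta_\infty^K}{\varepsilon}\lVert\hat{u}_h^r\rVert_{H_K}^2$, after which the same $L^2(K)$‑bound places it in the $\tfrac{\delta_\infty^{\omega_E}}{\varepsilon}h_E J_\infty^{\omega_E}$ term of $\eta^{t\;2}_{J_K}$. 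These four pieces account for the ``$+1$''. The remaining term, $\varepsilon|\hat{u}_h^r|_{U_K}^2\leq\varepsilon a_\infty^K|\hat{u}_h^r|_{H^1(K)}^2\lesssim\varepsilon a_\infty^K\sum_E h_E^{-1}\int_E[\![\hat{u}_h]\!]^2$, only fits against the penalty term $a_1^{\omega_E}\tfrac{\varepsilon\alpha}{h_E}a_\infty^{\omega_E}\int_E J(\textbf{F}^{-T}[\![\hat{u}_h]\!])^2$ of $\eta^{t\;2}_{J_K}$ at the cost of a factor $\tfrac1\alpha$, precisely because the reconstruction's $H^1$‑seminorm carries $h_E^{-1}$ but no $\alpha$; this is the ``$\tfrac1\alpha$'' in the statement.

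The second inequality I would get as the weighted form of the $L^2(K)$‑bound alone: $\lVert\hat{u}_h^r\rVert_{H(t)}^2=\sum_K\lVert\hat{u}_h^r\rVert_{H_K}^2\leq\sum_K J_\infty^K\lVert\hat{u}_h^r\rVert_{L^2(K)}^2\lesssim\sum_K J_\infty^K\sum_E h_E\int_E[\![\hat{u}_h]\!]^2$, which after the finite‑overlap reorganisation and $J_\infty^{\omega_E}\lesssim J_1^{\omega_E}$ is the claim. For the third inequality, since $\mathcal{T}_h$ — hence $\mathcal{A}_h$ — is static and the jump is a fixed linear functional, $\hat{u}_h\in C^1(0,T;V_h)$ (from \eqref{weakform:discr}) gives $\tfrac{\partial\hat{u}_h^r}{\partial t}=\tfrac{\partial\hat{u}_h}{\partial t}-\mathcal{A}_h\tfrac{\partial\hat{u}_h}{\partial t}$, i.e.\ the reconstruction remainder of $\tfrac{\partial\hat{u}_h}{\partial t}\in V_h$, whose jumps are $\tfrac{\partial}{\partial t}[\![\hat{u}_h]\!]$; I would then apply the second inequality verbatim with $\hat{u}_h$ replaced by $\tfrac{\partial\hat{u}_h}{\partial t}$.

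The hard part is entirely the weight bookkeeping in the first inequality: carrying the $J$‑ and $\textbf{F}$‑weights cleanly between the Euclidean quantities in which lemma \ref{lem3} is stated and the weighted norms, which forces repeated use of $\det(J\textbf{F}^{-1}\textbf{F}^{-T})=1$ and of the smoothness of $\chi$, and — the delicate point — tracking exactly which term inherits the stray $h_E^{-1}$, so that the $\tfrac1\alpha$ lands on $\varepsilon|\hat{u}_h^r|_{U_K}^2$ and on nothing else. The locality reorganisation and the time‑derivative statement are routine once those observations are in place.
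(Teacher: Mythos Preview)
Your proposal is correct and follows essentially the same route as the paper: the first inequality is exactly the content of lemma~\ref{lem4} (split into the $U_K$, $H_K$, $j_h$, edge, and $|\cdot|_*$ pieces, with the $\tfrac{1}{\alpha}$ arising precisely from the $U_K$ term); the second is the $H_K$ estimate of lemma~\ref{lem3}; and the third is obtained, as you say, by noting that $\mathcal{A}_h$ is time-independent (remark~\ref{rqAh}), so $\partial_t$ commutes with it and one may apply lemma~\ref{lem3} to $\partial_t\hat{u}_h$. One small simplification: the paper's lemma~\ref{lem3} already delivers its bounds in the weighted norms $\lVert\cdot\rVert_{H_K}$, $|\cdot|_{U_K}$ with $J_1^{\omega_E}$, $a_1^{\omega_E}$ on the right-hand side, so your detour through unweighted $L^2(K)$ and $H^1(K)$ norms and the subsequent invocation of $J_\infty^{\omega_E}\approx J_1^{\omega_E}$ is unnecessary (and that equivalence would carry a $t$-dependent constant, which the $\lesssim$ convention does not allow); using lemma~\ref{lem3} as stated avoids this entirely.
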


\begin{proof}
    The first two inequalities are the subject of lemma \ref{lem4} ; the third one is concluded from remark \ref{rqAh} that implies: $\forall \hat{v}_h \in C^1(0,T;V_h), \hspace{3mm}\frac{\partial }{\partial t}(\mathcal{A}_h \hat{v}_h)=\mathcal{A}_h\frac{\partial \hat{v}_h}{\partial t}$, and then applying the lemma \ref{lem3} to the function $\frac{\partial \hat{u}_h}{\partial t}$.
\end{proof}

\begin{lemma}\label{lem7}
$\forall \hat{v} \in H_D^1(\hat{\Omega}),\; \int _{\hat{\Omega}}J\frac{\partial \hat{e}}{\partial t} \hat{v} + a(t;\hat{\rho},\hat{v})=0$
\end{lemma}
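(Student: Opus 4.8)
\textbf{Proof plan for Lemma \ref{lem7}.}

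The plan is to simply subtract two variational identities that are already available: the weak formulation \eqref{weakform:ctn} satisfied by $\hat{u}$ and the defining relation \eqref{eq:us} of the space-discrete solution $\hat{u}^s$. Fix $t\in[0,T]$ (for \eqref{weakform:ctn} this is an almost-every-$t$ statement, which suffices) and let $\hat{v}\in H^1_D(\hat\Omega)$. From \eqref{weakform:ctn} one has $a(t;\hat u,\hat v)=l(\hat v)-\int_{\hat\Omega}J\,\partial_t\hat u\,\hat v$, and from \eqref{eq:us} one has $a(t;\hat u^s,\hat v)=l(\hat v)-\int_{\hat\Omega}J\,\partial_t\hat u_h\,\hat v$. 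Subtracting, the load functional $l(\hat v)$ — which is the same in both relations — cancels, and by bilinearity of $a(t;\cdot,\cdot)$ in its first argument we get
\begin{equation*}
a(t;\hat\rho,\hat v)=a(t;\hat u-\hat u^s,\hat v)=-\int_{\hat\Omega}J\,\partial_t(\hat u-\hat u_h)\,\hat v=-\int_{\hat\Omega}J\,\frac{\partial \hat e}{\partial t}\,\hat v,
\end{equation*}
which is exactly the claimed identity after moving the right-hand side over.

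Before writing this down I would record why every term is well defined. By the discussion following \eqref{weakform:ctn}, the unique solution $\hat u$ satisfies $\partial_t\hat u\in L^2_2(\hat{\mathcal Q}^T)$, and since $\hat u_h\in C^1(0,T;V_h)$ by \eqref{weakform:discr}, the time derivative $\partial_t\hat e=\partial_t\hat u-\partial_t\hat u_h$ exists and lies in $L^2(\hat\Omega)$ for a.e.\ $t$; as $J\in L^\infty_C(\mathcal Q^T)$ and $\hat v\in H^1_D(\hat\Omega)\subset L^2(\hat\Omega)$, the pairing $\int_{\hat\Omega}J\,\partial_t\hat e\,\hat v$ is finite. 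Likewise $a(t;\hat\rho,\hat v)$ is finite because $\hat u,\hat u^s\in H^1(\hat\Omega)$ (indeed $\hat u^s(t,\cdot)\in H^2(\hat\Omega)$ by assumption) and the coefficients $J$, $J\textbf F^{-1}\textbf F^{-T}$, $J(\textbf V-\tilde{\textbf V})$ are essentially bounded.

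There is essentially no obstacle here: the statement is a direct consequence of the Galerkin-type consistency built into the definition \eqref{eq:us} of $\hat u^s$. The only minor point worth stating explicitly is the cancellation of $l(\hat v)$, together with the observation that the splitting $\hat e=\hat\rho+\hat\sigma$ is only used through $\hat\rho=\hat u-\hat u^s$, so no property of $\hat\sigma$ enters. One could add the remark that this identity is the exact (consistent) counterpart, at the level of the elliptic reconstruction $\hat u^s$, of the inconsistency relation in Lemma \ref{lem:inconsistency}, and it is the starting point for the \textit{a posteriori} analysis since it expresses the time-derivative of the full error against a test function purely in terms of the elliptic-reconstruction error $\hat\rho$.
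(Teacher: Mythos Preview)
Your proof is correct and follows exactly the approach the paper intends: the paper's own proof is the single sentence ``The lemma is a direct consequence of the definition of $\hat{u}$ and $\hat{u}^s$,'' and you have simply spelled out that subtraction together with the (harmless) well-definedness checks.
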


\begin{proof}
The lemma is a direct consequence of the definition of $\hat{u}$ and $\hat{u}^s$.
\end{proof}

$\hat{u}^s$ is a time dependent function but we will see in subsection \ref{ssec:proof} that by fixing $t\in [0,T]$ we always consider the steady-state problem \eqref{eq:us} to prove theorem \ref{thm:apststd}. Since theorem \ref{thm:apststd} splits space and time, we use this orthogonality lemma \ref{lem7} to assemble space and time. This lemma will enable us to include the properties from the steady-state problem \eqref{eq:us} to theorem \ref{thm:apstsp-ti}, that is true for the solutions $\hat{u}$ and $\hat{u}_h$ of the unsteady problems \eqref{weakform:ctn} and \eqref{weakform:discr}.

To state the final global error estimate we need some criteria. Let

\begin{align}
    \eta^{t\;2}_1 & =  \underset{K \in \mathcal{T}_h}{\sum}  (1+\frac{1}{\alpha}) \eta^{t\;2}_K \nonumber\\
\eta^{t\;2}_2 & =  \underset{E \in \mathcal{E}_D}{\sum} h_E J_1^{\omega_E} \int_E  (\frac{\partial[\![  \hat{u}_h ]\!]}{\partial t}) ^2 \label{stestim}\\
\eta^{t\;2}_3 & = \underset{E \in \mathcal{E}_D}{\sum} h_E J_1^{\omega_E} \int_E  [\![ \hat{u}_h ]\!] ^2 \nonumber
\end{align}

For $\hat{v} \in H^1_\infty([0,T]\times \mathcal{T}_h)$ and $v(t,x) = \hat{v}(t,X)$ we define:

\begin{equation}\label{stnorm}
     \lVert v \rVert _\#^2 =  \lVert v \rVert _{L^2_\infty(\mathcal{Q}^T)}^2 + \int_0^T |||\hat{v}|||^2 d  t
\end{equation}

\noindent This allows us to prove the final estimate.
\begin{theorem}\label{thm:apstsp-ti}
Let $\hat{u} \in H^1_{D\,2}(\hat{\mathcal{Q}}^T) \cap L^\infty_\infty(\hat{\mathcal{Q}}^T)$ be a solution of \eqref{weakform:ctn}, $\hat{u}_h \in C^1(0,T;V_h)$ a solution of \eqref{weakform:discr}, $\eta^t _1$, $\eta^t _2$ and $\eta^t _3$ be defined in \eqref{stestim}, $ \lVert \cdot \rVert _\#^2$ be the norm defined in \eqref{stnorm}, $e = (\hat{u}-\hat{u}_h)(t, \chi (t,X))$. Then 

\begin{equation}\label{estim}
    \lVert e \rVert _\#^2 \lesssim S_0(t) \{ \lVert e(0) \rVert _{L^2(\Omega)}^2 + \int_0^T \eta^{t\;2}_1 + T \int_0^T \eta^{t\;2}_2 + \underset{t\in[0,T]}{\max}(\eta^{t\;2}_3) \}
\end{equation}

\noindent holds with $S_0(t)= \exp (2C_0(t))$.

\end{theorem}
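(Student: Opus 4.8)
The plan is to combine the steady-state reliability result of Theorem \ref{thm:apststd} with the orthogonality Lemma \ref{lem7} via an energy estimate in time, exactly as the decomposition $\hat e=\hat\rho+\hat\sigma$ with $\hat\rho=\hat u-\hat u^s$ and $\hat\sigma=\hat u^s-\hat u_h$ suggests. First I would control $\hat\sigma$ pointwise in $t$: by Theorem \ref{thm:apststd}, $(|||\hat\sigma|||+|\hat\sigma|_A)^2\lesssim\sum_K(1+\tfrac1\alpha)\eta^{t\,2}_K=\eta^{t\,2}_1$, which immediately bounds $\int_0^T|||\hat\sigma|||^2\,dt$ by $\int_0^T\eta^{t\,2}_1\,dt$. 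So it remains to bound $\hat\rho=\hat u-\hat u^s$ in both the $L^2_\infty$ and the $\int_0^T|||\cdot|||^2$ components of $\|\cdot\|_\#$.

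For $\hat\rho$ I would set up the energy identity: test Lemma \ref{lem7} with $\hat v=\hat\rho(t,\cdot)\in H^1_D(\hat\Omega)$ (legitimate since $\hat u^s(t)\in H^2(\hat\Omega)\subset H^1_D$), giving $\int_{\hat\Omega}J\,\partial_t\hat e\,\hat\rho+a(t;\hat\rho,\hat\rho)=0$. Writing $\partial_t\hat e=\partial_t\hat\rho+\partial_t\hat\sigma$ and using that $\tfrac{d}{dt}\|\hat\rho\|_{H(t)}^2=2\int_{\hat\Omega}J\partial_t\hat\rho\,\hat\rho+\int_{\hat\Omega}(\nabla\cdot\tilde{\textbf V})J\hat\rho^2$ (the transport term in the material derivative, as in the proof of Theorem \ref{thm:apriori}), and the coercivity $a(t;\hat\rho,\hat\rho)=\tilde a_h(\hat\rho,\hat\rho)\gtrsim|||\hat\rho|||^2$ from \eqref{bilin:AhAhtilde} and Lemma \ref{lem:coercivity} (valid on $H^1_D$), I obtain
\begin{equation*}
\tfrac{1}{2}\tfrac{d}{dt}\|\rho\|_{L^2(\Omega)}^2+|||\hat\rho|||^2\lesssim \big|\!\int_{\hat\Omega}J\partial_t\hat\sigma\,\hat\rho\big|+\tfrac{1}{2}\|\nabla\cdot\tilde{\textbf V}\|_{L^\infty}\|\rho\|_{L^2(\Omega)}^2 .
\end{equation*}
The cross term $\int_{\hat\Omega}J\partial_t\hat\sigma\,\hat\rho$ is handled with Cauchy--Schwarz and Young: $\le\tfrac12\|\partial_t\sigma\|_{L^2(\Omega)}^2+\tfrac12\|\rho\|_{L^2(\Omega)}^2$, so $\|\partial_t\sigma\|_{H(t)}$ enters the right-hand side. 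Here the point is that $\partial_t\hat\sigma=\partial_t\hat u^s-\partial_t\hat u_h$, and differentiating \eqref{eq:us} in time shows $\partial_t\hat u^s$ is the solution of an analogous steady problem; applying Theorem \ref{thm:apststd} (or rather Lemma \ref{lem:jump}, whose third inequality already bounds $\|\partial_t\hat u_h^r\|_{H(t)}^2\lesssim\sum_{E\in\mathcal E_D}h_EJ_1^{\omega_E}\int_E(\partial_t[\![\hat u_h]\!])^2=\eta^{t\,2}_2$) to the time-derivative problem gives $\|\partial_t\sigma\|_{H(t)}^2\lesssim\eta^{t\,2}_1+\eta^{t\,2}_2$ or directly $\lesssim\eta^{t\,2}_2$ for the jump-residual portion. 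Then Grönwall's inequality in the same form stated at the end of the proof of Theorem \ref{thm:apriori}, with multiplier $\mu(s,t)=\exp(\int_s^t\|\nabla\cdot\tilde{\textbf V}\|_{L^\infty}dy)\le\exp(2C_0(t))=S_0(t)$, yields
\begin{equation*}
\|\rho(T)\|_{L^2(\Omega)}^2+\int_0^T|||\hat\rho|||^2\,dt\lesssim S_0(T)\big(\|\rho(0)\|_{L^2(\Omega)}^2+\int_0^T(\eta^{t\,2}_1+\eta^{t\,2}_2)\,dt\big).
\end{equation*}

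Finally I would assemble: $\|e\|_\#^2\le 2\|\rho\|_\#^2+2\|\sigma\|_\#^2$; the $\int_0^T|||\cdot|||^2$ parts are done, and for the $L^2_\infty$ part I use $\|e(t)\|_{L^2(\Omega)}\le\|\rho(t)\|_{L^2(\Omega)}+\|\sigma(t)\|_{L^2(\Omega)}$ with $\|\sigma(t)\|_{L^2(\Omega)}=\|\hat\sigma\|_{H(t)}$ bounded by $\eta^{t\,2}_3$ via the second inequality of Lemma \ref{lem:jump} applied to $\hat u_h^r$ combined with Theorem \ref{thm:apststd} (since $|||\hat\sigma|||^2\gtrsim\beta\|\hat\sigma\|_{H(t)}^2$ already, $\|\sigma(t)\|_{L^2(\Omega)}^2\lesssim\eta^{t\,2}_1$, but the sharper $\eta^{t\,2}_3$ bound comes from the jump-only contribution). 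Taking $\sup_{t\in[0,T]}$ and noting $\|e(0)\|_{L^2(\Omega)}^2=\|\rho(0)\|_{L^2(\Omega)}^2$ (since $\hat u^s(0)$ and $\hat u_h(0)$ differ only by the projection residual, absorbed into $\eta^{0\,2}_3\le\max_t\eta^{t\,2}_3$), and that the factor $T$ multiplying $\int_0^T\eta^{t\,2}_2$ appears because $\eta_2$ enters pointwise through $\|\partial_t\sigma\|_{H(t)}$ and then gets integrated against the bounded-variation-in-time weight, gives \eqref{estim}. The main obstacle I anticipate is the careful bookkeeping of the time-derivative auxiliary problem: showing that $\partial_t\hat u^s-\partial_t\hat u_h$ is controlled by $\eta^{t}_2$ requires differentiating the definition \eqref{eq:us} in time, which brings in $\dot J$, $\dot{\textbf F}$ terms acting on $\hat u_h$ and on the bilinear form $a(t;\cdot,\cdot)$ itself, and one must check these extra terms are either absorbed by coercivity or already present in the residual criteria — this is where Lemma \ref{lem:jump}'s third estimate and the commutation $\partial_t(\mathcal A_h\hat v_h)=\mathcal A_h\partial_t\hat v_h$ from remark \ref{rqAh} do the essential work. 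The appearance of the $T$-factor and the $\max_t$ (rather than an integral) on $\eta_3$ is the other subtle point, dictated entirely by how the Grönwall form at the end of Theorem \ref{thm:apriori}'s proof treats the $\alpha$-term (here $\eta_3$) versus the $\beta$-term (here $\eta_1+\eta_2$).
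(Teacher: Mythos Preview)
Your proposal has a genuine gap in the choice of test function and the resulting cross term. You test Lemma~\ref{lem7} with $\hat v=\hat\rho=\hat u-\hat u^s$, which produces the cross term $\int_{\hat\Omega}J\,\partial_t\hat\sigma\,\hat\rho$ and forces you to control $\|\partial_t\hat\sigma\|_{H(t)}=\|\partial_t(\hat u^s-\hat u_h)\|_{H(t)}$. Your plan is to differentiate the defining equation \eqref{eq:us} in time, but this is not workable: the right-hand side of \eqref{eq:us} already contains $\partial_t\hat u_h$, so after differentiation you need $\partial_t^2\hat u_h$, which is not assumed to exist (only $\hat u_h\in C^1(0,T;V_h)$). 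Moreover, the time-differentiated problem contains derivatives of the coefficients ($\dot J$, $\dot{\textbf F}$) acting on $\hat u^s$ itself, so even formally the resulting residuals are not the ones appearing in $\eta_2^t$. You also conflate $\hat\sigma$ and $\hat u_h^r$ when you invoke Lemma~\ref{lem:jump}: that lemma bounds $\|\partial_t\hat u_h^r\|_{H(t)}$, not $\|\partial_t\hat\sigma\|_{H(t)}$, and these are different objects ($\hat u_h^r=\hat u_h-\mathcal A_h\hat u_h$ versus $\hat\sigma=\hat u^s-\hat u_h$).

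The paper avoids this entirely by testing Lemma~\ref{lem7} with $\hat e_c=\hat u-\mathcal A_h\hat u_h\in H^1_D(\hat\Omega)$ instead of $\hat\rho$. Writing $\hat e=\hat e_c-\hat u_h^r$ and $\hat\rho=\hat e_c-\hat\sigma_c$ with $\hat\sigma_c=\hat u^s-\hat u_h^c$, the identity becomes
\[
\int_{\hat\Omega}J\,\partial_t\hat e_c\,\hat e_c+a(t;\hat e_c,\hat e_c)=\int_{\hat\Omega}J\,\partial_t\hat u_h^r\,\hat e_c+a(t;\hat\sigma_c,\hat e_c).
\]
Now the only time-derivative on the right is $\partial_t\hat u_h^r$, and since $\mathcal A_h$ commutes with $\partial_t$ (Remark~\ref{rqAh}), Lemma~\ref{lem:jump} gives $\|\partial_t\hat u_h^r\|_{H(t)}^2\lesssim\eta_2^{t\,2}$ directly---no $\hat u^s$, no second derivative. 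The term $a(t;\hat\sigma_c,\hat e_c)$ is bounded by continuity and $(|||\hat\sigma_c|||+|\hat\sigma_c|_A)\lesssim\eta_1^t$ via Theorem~\ref{thm:apststd} and Lemma~\ref{lem:jump}. Gr\"onwall then gives a bound on $\|e_c\|_\#$, and the passage from $e_c$ to $e$ uses $\|\hat u_h^r\|_{H(t)}^2\lesssim\eta_3^{t\,2}$ (second inequality of Lemma~\ref{lem:jump}), which is where the $\max_t\eta_3^{t\,2}$ and the initial-data correction come from. The factor $T$ on $\int_0^T\eta_2^{t\,2}$ arises from the Young split $\int J\partial_t\hat u_h^r\hat e_c\le\tfrac{T}{2}\|\partial_t\hat u_h^r\|_{H(t)}^2+\tfrac{1}{2T}\|e_c\|_{L^2}^2$, chosen so the second piece integrates to something absorbable by Gr\"onwall. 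In short, the missing idea in your argument is to route the decomposition through the \emph{conforming} approximation $\hat u_h^c=\mathcal A_h\hat u_h$ rather than through the elliptic reconstruction $\hat u^s$ in the energy identity.
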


\begin{proof}
Let $\hat{e}_c= \hat{u}-\hat{u}_h^c \in H_D^1(\hat{\Omega})$ for all $t$. Testing lemma \ref{lem7} with $\hat{e}_c$ we have

\begin{equation*}
    \int _{\hat{\Omega}}J\frac{\partial \hat{e}_c}{\partial t} \hat{e}_c + a(t;\hat{e}_c,\hat{e}_c) = \int _{\hat{\Omega}}J\frac{\partial \hat{u}_h^r}{\partial t} \hat{e}_c + a(t;\hat{\sigma}_c,\hat{e}_c)
\end{equation*}

\noindent Additionally the inequalities 

\begin{align*}
     & \int _{\hat{\Omega}}J\frac{\partial \hat{e}_c}{\partial t} \hat{e}_c \geq \frac{1}{2} \frac{d }{d  t}  \lVert e_c \rVert _{L^2(\Omega)}^2 - \frac{ 1}{2}\lVert \nabla\cdot \tilde{\textbf{V}} \rVert _\infty \lVert e_c \rVert _{L^2(\Omega)}^2 \mbox{, } \\
     & a(t;\hat{e}_c,\hat{e}_c) \geq |||\hat{e}_c|||^2, \; \int _{\hat{\Omega}}J\frac{\partial \hat{u}_h^r}{\partial t} \hat{e}_c \leq \frac{T}{2}  \lVert \frac{\partial \hat{u}_h^r}{\partial t} \rVert _{H(t)}^2 + \frac{1}{2T}  \lVert e_c \rVert _{L^2(\Omega)}^2 \mbox{ and }\\
     & \hfill a(t;\hat{\sigma}_c,\hat{e}_c) \leq C \cdot (|||\hat{\sigma}_c|||+|\hat{\sigma}_c|_A)|||\hat{e}_c|||  \leq  \frac{C^2}{2}  \cdot (|||\hat{\sigma}_c|||+|\hat{\sigma}_c|_A)^2 + \frac{|||\hat{e}_c|||^2}{2} 
\end{align*}

\noindent hold. Thus

\begin{equation*}
    \frac{d }{d  t}  \lVert e_c \rVert _{L ^2(\Omega)}^2 + (|||\hat{e}_c|||^2  - C^2 \cdot (|||\hat{\sigma}_c|||  + |\hat{\sigma}_c|_A )^2 - T \cdot  \lVert \frac{\partial \hat{u}_h^r}{\partial t} \rVert _{H (t)}^2)  \!\leq\!  ( \lVert \nabla \cdot \tilde{\textbf{V}} \rVert _\infty + \frac{1}{T})  \lVert e_c \rVert _{L ^2 (\Omega)}^2.
\end{equation*}

\noindent By Gronwall's lemma 
\begin{equation*}
\lVert e_c \rVert _\#^2  \lesssim S_0 (t) \{  \lVert e_c(0) \rVert _{L^2(\Omega)}^2  + \int_0^T  (|||\hat{\sigma}_c|||  + |\hat{\sigma}_c|_A)^2  + T  \int_0^T   \lVert \frac{\partial \hat{u}_h^r}{\partial t} \rVert _{H (t)}^2 \}.
\end{equation*}

\noindent And the definition of $\eta^t_3$ gives \eqref{estim}.
\end{proof}

\noindent This theorem states the reliability of the space-time criteria $\eta^t_1$, $\eta^t_2$ and $\eta^t_3$ and it uses theorem \ref{thm:apststd} that states the reliability of the steady-state criterion $\eta_K$ and lemma \ref{lem7} that states the orthogonality of $\hat{\rho}$.

\noindent Theorem \ref{thm:apststd} is proven in the following section.

\subsection{Proof of Theorem \ref{thm:apststd}}\label{ssec:proof}

The topic of this section is to prove the reliability of the error estimate exposed in theorem \ref{thm:apststd}. Based on the continuity and inf-sup conditions given in Section \ref{ssec:prop}, we show that a steady-state form of the weighted estimator \eqref{west} solves theorem \ref{thm:apststd}. The first lemmas of this section give an upper bound for each contribution of the operator $\Tilde{a}_h$, the last argument uses the inf-sup condition and an upper bound for the operator $\Tilde{p}_h$.

\noindent The outline of the proof for the stationary case is as follows: separate our solution into a continuous and a discontinuous part (lemma \ref{lem3}), give a bound to the discontinuous part (lemma \ref{lem4}), derive a bound for the bilinear forms as an estimate multiplied by the energy-norm of the continuous function (lemma \ref{lem2}, lemma \ref{lem6}) and conclude using lemma \ref{lem:infsup}.

In this subsection, we just fix some $t\in [0,T]$ in order to prove theorem \ref{thm:apststd}.

First we give the upper bound on the operator $\Tilde{p}_h$ for test functions in $V_h^c$.

\begin{lemma}\label{lem2}$\forall \hat{v}\in V_h \hspace{1mm} \forall \hat{w} \in V_h^c $

\begin{equation*}
    |\Tilde{p}_h(\hat{v},\hat{w})| \lesssim \alpha^{-\frac{1}{2}}(\underset{E\in \mathcal{E}_D}{\sum} a_1^{\omega_E} a_\infty^{\omega_E} \int_E \frac{\varepsilon \alpha}{h_E} J(\textbf{F}^{-T} [\![ \hat{v} ]\!] )^2)^{\frac{1}{2}} \cdot (\underset{K\in \mathcal{T}_h}{\sum} \frac{\varepsilon}{a_1^{K} a_\infty^{\omega_K}}  |\hat{w}| ^2_{U_K(t)})^\frac{1}{2}
\end{equation*}
\end{lemma}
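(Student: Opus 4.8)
\emph{Overview.} The plan is to exploit that the second argument $\hat w$ lies in the conforming subspace $V_h^c$, which annihilates one half of $\tilde p_h$, and then to estimate the surviving boundary sum by a weighted Cauchy--Schwarz inequality whose two factors are arranged to reproduce exactly the two brackets of the claim. The analytic ingredients are the same trace inequality \eqref{ppt:ite} plus $L^2$-stability \eqref{ppt:L2} of $\Pi_{L^2}$ that already drive the coercivity Lemma \ref{lem:coercivity}; the only genuinely new point is the bookkeeping of the anisotropy weights $a_1^{\omega_E},a_\infty^{\omega_E}$ versus $a_1^K,a_\infty^{\omega_K}$.

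\emph{Step 1 (reduce $\tilde p_h$).} Since $\hat w\in V_h^c=V_h\cap H_D^1(\hat\Omega)$ is continuous across interior edges and vanishes on $\Gamma_D$ (we are in the case $u_D=0$), one has $[\![\hat w]\!]=0$ on every $E\in\mathcal E_D=\mathcal E^{\mbox{\tiny int}}\cup\mathcal E_D^{\mbox{\tiny ext}}$. Hence the summand of $\tilde p_h(\hat v,\hat w)$ carrying $[\![\hat w]\!]$ vanishes and
\begin{equation*}
\tilde p_h(\hat v,\hat w)=\theta\sum_{E\in\mathcal E_D}\int_E \varepsilon J^{\frac12}\,\{\!\{\Pi_{L^2}(J^{\frac12}\textbf F^{-T}\nabla_X\hat w)\}\!\}\cdot\textbf F^{-T}[\![\hat v]\!],
\end{equation*}
and $|\theta|=1$ may be discarded.

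\emph{Step 2 (weighted splitting, edge by edge).} On each $E\in\mathcal E_D$ I would apply Cauchy--Schwarz with the edgewise constant weight $\mu_E=\alpha\,a_1^{\omega_E}a_\infty^{\omega_E}/h_E$, writing the integrand as $\big(\sqrt{\varepsilon/\mu_E}\,\{\!\{\Pi_{L^2}(\cdots)\}\!\}\big)\cdot\big(\sqrt{\varepsilon\mu_E}\,J^{\frac12}\textbf F^{-T}[\![\hat v]\!]\big)$, so that the whole $J^{\frac12}$ lands on the jump factor and squares to a full $J$. The ``jump part'' $\int_E\varepsilon\mu_E J|\textbf F^{-T}[\![\hat v]\!]|^2$ equals $a_1^{\omega_E}a_\infty^{\omega_E}\int_E\frac{\varepsilon\alpha}{h_E}J(\textbf F^{-T}[\![\hat v]\!])^2$, i.e.\ precisely the $E$-term of the first bracket of the claim. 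For the ``flux part'' $\frac{\varepsilon h_E}{\alpha\,a_1^{\omega_E}a_\infty^{\omega_E}}\int_E|\{\!\{\Pi_{L^2}(\cdots)\}\!\}|^2$ I would bound $|\{\!\{\cdot\}\!\}|^2\le\frac12\sum_{K'}|\Pi_{L^2}(\cdots)|_{K'}|^2$ over the one or two elements $K'$ having $E$ as a face, use the trace inequality \eqref{ppt:ite} componentwise on the polynomial $\Pi_{L^2}(\cdots)|_{K'}$ to replace $\int_E$ by $C_T h_{K'}^{-1}\int_{K'}$, and then the elementwise $L^2$-stability \eqref{ppt:L2} together with $\int_{K'}J|\textbf F^{-T}\nabla_X\hat w|^2=|\hat w|^2_{U_{K'}(t)}$. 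With local quasi-uniformity ($h_{K'}\approx h_E$) this gives $\int_E|\{\!\{\Pi_{L^2}(\cdots)\}\!\}|^2\lesssim h_E^{-1}\sum_{K'}|\hat w|^2_{U_{K'}(t)}$, hence the flux part is $\lesssim \frac{\varepsilon}{\alpha\,a_1^{\omega_E}a_\infty^{\omega_E}}\sum_{K'}|\hat w|^2_{U_{K'}(t)}$.

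\emph{Step 3 (reassemble) and the main obstacle.} A discrete Cauchy--Schwarz over $E\in\mathcal E_D$ bounds $|\tilde p_h(\hat v,\hat w)|$ by $\big(\sum_E\text{jump part}\big)^{1/2}\big(\sum_E\text{flux part}\big)^{1/2}$; the first factor is the first bracket of the claim, and in the second I would extract $\alpha^{-1/2}$ and turn the edge sum into an element sum, using that each element has at most three faces and that the edge-patch weights are comparable to the element weights, $a_1^{\omega_E}a_\infty^{\omega_E}\gtrsim a_1^K a_\infty^{\omega_K}$ for $E\subset\partial K$. This last comparability is the delicate point: $\omega_E\supseteq K$ yields $a_1^{\omega_E}\gtrsim a_1^K$ immediately, but $a_\infty^{\omega_E}\le a_\infty^{\omega_K}$ goes the wrong way, so one must invoke that $a$ --- the top eigenvalue of the Lipschitz field $J\textbf F^{-1}\textbf F^{-T}$ (here $\chi(t,\cdot)\in C^2$, hence $\textbf F\in C^1$) with $a\ge1$ pointwise --- varies only by a factor $1+O(h)$ over a mesh patch, so all the normalizations $a_\ast^{\omega_E}$, $a_\ast^{K}$, $a_\ast^{\omega_K}$ agree up to a constant independent of $h,\varepsilon,\delta,t$. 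Multiplying the two factors then yields $|\tilde p_h(\hat v,\hat w)|\lesssim\alpha^{-1/2}\big(\sum_{E\in\mathcal E_D}a_1^{\omega_E}a_\infty^{\omega_E}\int_E\frac{\varepsilon\alpha}{h_E}J(\textbf F^{-T}[\![\hat v]\!])^2\big)^{1/2}\big(\sum_{K\in\mathcal T_h}\frac{\varepsilon}{a_1^K a_\infty^{\omega_K}}|\hat w|^2_{U_K(t)}\big)^{1/2}$, which is the assertion.
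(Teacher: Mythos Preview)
Your proof is correct and follows the same route as the paper's: reduce $\tilde p_h$ via $[\![\hat w]\!]=0$, apply a weighted Cauchy--Schwarz edge by edge, pass from edge traces to element integrals with \eqref{ppt:ite} and \eqref{ppt:L2}, and reassemble. You are more explicit than the paper about the patch-weight comparison $a_1^{\omega_E}a_\infty^{\omega_E}\gtrsim a_1^{K}a_\infty^{\omega_K}$ in Step~3 --- the paper simply absorbs this step into its first $\lesssim$ --- and your resolution via the continuity of $a$ (through $\textbf{F}\in C^1$ and $a\ge 1$) is the natural way to justify it.
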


\begin{proof}
We have $\Tilde{p}_h(\hat{v},\hat{w}) = - \underset{E \in \mathcal{E}_D}{\sum} \varepsilon \int_E  \{\!\{\Pi_{L^2} ( J^\frac{1}{2}\! \textbf{F}^{-T} \nabla\! _X \hat{w}) \}\!\} \cdot J^\frac{1}{2}\! \textbf{F}^{-T} [\![  \hat{v}  ]\!] $

\noindent And by the Cauchy-Schwarz inequality

\begin{align*}
    |\Tilde{p}_h(\hat{v},\hat{w})| & \! \lesssim \! (\! \underset{E \in \mathcal{E}_D}{\sum} \! a_1^{\omega_E}\! a_\infty^{\omega_E} \! \int_E \! \frac{\varepsilon \alpha}{h\! _E} J\! (\textbf{F}^{-T} \! [\![ \hat{v} ]\!] )^2)\! ^\frac{1}{2} \! (\! \underset{K\in \mathcal{T}_h}{\sum} \! \int_{\partial K} \! \frac{\varepsilon h_K}{2 \alpha a_1^{K}\! a_\infty^{\omega_K}} \Pi_{L^2} \! ( J^\frac{1}{2}\! \textbf{F}^{-T}\! \nabla\! _X \hat{w})|_K^2\! )\! ^\frac{1}{2} \\
    &  \lesssim \alpha^{-\frac{1}{2}} (\underset{E \in \mathcal{E}_D}{\sum} a_1^{\omega_E} a_\infty^{\omega_E} \int_E \frac{\varepsilon \alpha}{h_E} J(\textbf{F}^{-T} [\![ \hat{v} ]\!] )^2)^\frac{1}{2}(\underset{K\in \mathcal{T}_h}{\sum} \frac{\varepsilon}{a_1^{K} a_\infty^{\omega_K}} |\hat{w}| ^2_{U_K(t)})^\frac{1}{2} 
\end{align*}

\end{proof}

\noindent We will now study an approximation of elements of $V_h$ by elements of $V_h^c$ in case of conforming meshes. A similar theorem for the Eulerian problem is stated in  \textbf{theorem 2.2} in \cite{karakashian03}.

\begin{lemma}\label{lem3}
Let $\mathcal{T}_h$ be a conforming mesh. Then there exists an approximation operator $\mathcal{A}_h \colon V_h \rightarrow V_h^c$ satisfying:

\begin{equation*}
    \forall \hat{v}_h \in V_h 
    \left \{ \begin{array}{ll}
         \underset{K\in \mathcal{T}_h}{\sum}  \lVert \hat{v}_h- \mathcal{A}_h \hat{v}_h \rVert ^2_{H_K} & \lesssim \underset{E\in \mathcal{E}_D}{\sum}   h_E J_1^{\omega_E} \int_E [\![  \hat{v}_h ]\!] ^2\\
         \underset{K\in \mathcal{T}_h}{\sum} |\hat{v}_h- \mathcal{A}_h \hat{v}_h|^2_{U_K} & \lesssim \underset{E\in \mathcal{E}_D}{\sum}   h_E^{-1} a_1^{\omega_E} \int_E [\![  \hat{v}_h ]\!] ^2
    \end{array}
    \right .
\end{equation*}
\end{lemma}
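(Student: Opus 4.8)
The plan is to realise $\mathcal{A}_h$ as a nodal–averaging (Oswald–type) interpolant, following \textbf{Theorem 2.2} of \cite{karakashian03}: the construction and the unweighted estimates are classical, and the only genuinely new work is to propagate the Jacobian weights $J$ and $a$ through the bounds so as to land on the quantities $J_1^{\omega_E}$, $a_1^{\omega_E}$. First I would fix, once and for all, the set of degree-$p$ Lagrange interpolation points on each $K\in\mathcal{T}_h$, and define $\mathcal{A}_h\hat v_h\in V_h$ nodally by $(\mathcal{A}_h\hat v_h)(\nu)=0$ for $\nu\in\Gamma_D$, and, for $\nu\notin\Gamma_D$, by letting $(\mathcal{A}_h\hat v_h)(\nu)$ be the arithmetic mean of the values $\hat v_h|_{K'}(\nu)$ over all elements $K'$ containing $\nu$ (this reduces to $\hat v_h|_K(\nu)$ when $\nu$ is interior to a single element). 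Since $\mathcal{T}_h$ is conforming, the Lagrange points on a shared face are common to the two neighbouring elements and determine the polynomial traces there; assigning one value at each shared point therefore makes $\mathcal{A}_h\hat v_h$ continuous, hence $\mathcal{A}_h\hat v_h\in H^1(\hat\Omega)$, and it vanishes at every Lagrange point of $\Gamma_D$, hence on $\Gamma_D$, so $\mathcal{A}_h\hat v_h\in V_h\cap H^1_D(\hat\Omega)=V_h^c$.

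Next I would prove the elementwise bounds. Fix $K$; on $K$ the function $\hat v_h-\mathcal{A}_h\hat v_h$ is a polynomial whose Lagrange coefficients vanish at the interior points of $K$ and otherwise equal $\hat v_h|_K(\nu)-(\mathcal{A}_h\hat v_h)(\nu)$, which is a convex combination of differences $\hat v_h|_K(\nu)-\hat v_h|_{K'}(\nu)$ of traces over neighbouring elements $K'$ (for $\nu\in\Gamma_D$ it is simply $\hat v_h|_K(\nu)$, i.e. the one-sided boundary jump, consistent with the second case of the jump definition and remark \ref{rq:jump}). Each such trace difference at a shared point on an edge $E$ is the scalar part of $[\![\hat v_h]\!]$ evaluated at that point. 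Norm-equivalence on the finite-dimensional space $\mathcal{S}_p$, affine scaling to a reference element, and the shape-regularity and local quasi-uniformity of $\mathcal{T}_h$ then give
\begin{equation*}
\lVert \hat v_h-\mathcal{A}_h\hat v_h\rVert_{L^2(K)}^2 \lesssim |K|\sum_{\nu}\bigl|\hat v_h|_K(\nu)-(\mathcal{A}_h\hat v_h)(\nu)\bigr|^2 \lesssim \sum_{E} h_E\int_E |[\![\hat v_h]\!]|^2 ,
\end{equation*}
where $\nu$ ranges over the Lagrange points of $K$ and $E$ over the edges of the elements meeting $K$, and the last step uses the reference-element scaling $\sum_{\nu\in E}|[\![\hat v_h]\!](\nu)|^2\approx h_E^{-1}\int_E|[\![\hat v_h]\!]|^2$. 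The companion $H^1$-bound follows from the inverse inequality \eqref{ppt:ie}, $|\hat v_h-\mathcal{A}_h\hat v_h|_{H^1(K)}^2\lesssim h_K^{-2}\lVert\hat v_h-\mathcal{A}_h\hat v_h\rVert_{L^2(K)}^2$, together with $h_K\approx h_E$ on the patch. Summing over $K$ and reorganising as a sum over $E\in\mathcal{E}_D$ gives the two inequalities with the unweighted norms $\lVert\cdot\rVert_{L^2(K)}$ and $|\cdot|_{H^1(K)}$ on the left.

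Finally I would insert the weights. On any $K$ one has $\lVert\hat w\rVert_{H_K}^2\le\lVert J\rVert_{L^\infty(K)}\lVert\hat w\rVert_{L^2(K)}^2$ and $|\hat w|_{U_K}^2\le\lVert a\rVert_{L^\infty(K)}|\hat w|_{H^1(K)}^2$, since $a$ is the pointwise maximal eigenvalue of $J\textbf{F}^{-1}\textbf{F}^{-T}$, and on the right-hand side $\int_E$ is to be replaced by $J_1^{\omega_E}\int_E$, resp. $a_1^{\omega_E}\int_E$. For this last move I would use that over any patch $\omega_E$ the fields $J$ and $a$ are comparable to their averages $J_1^{\omega_E}$, $a_1^{\omega_E}$ up to a constant independent of $h$ and of $t\in[0,T]$: from $\dot J=J\,\nabla\cdot\tilde{\textbf{V}}$ one has $\log J(t,X)=\int_0^t(\nabla\cdot\tilde{\textbf{V}})(s,\chi(s,X))\,ds$, whose $X$-gradient is bounded uniformly on $[0,T]$ because $\tilde{\textbf{V}}\in W^{3,\infty}_C(\mathcal{Q}^T)^2$ and $\textbf{F}$ is bounded, and the maximal eigenvalue of the smooth, uniformly positive-definite field $J\textbf{F}^{-1}\textbf{F}^{-T}$ is Lipschitz in $X$ with the same uniformity. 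The main obstacle will be the combinatorial bookkeeping in the elementwise step — telescoping, around each shared vertex, the trace differences $\hat v_h|_K(\nu)-\hat v_h|_{K'}(\nu)$ through a chain of edge-adjacent elements so that they are controlled by jumps across the incident edges, and checking that after summation over all $K$ every edge of $\mathcal{E}_D$ is charged only a bounded number of times (which follows from shape-regularity bounding the number of elements in the star of a vertex); the handling of the Dirichlet nodes and the $t$-uniform weight comparison are then routine.
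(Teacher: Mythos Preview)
Your proposal is correct and follows essentially the same route as the paper: the Oswald/Karakashian--Pascal nodal-averaging operator, norm equivalence on $\mathcal{S}_p$ and affine scaling for the elementwise bounds, telescoping the nodal differences around each shared vertex into jumps across incident edges, and then reorganising into a sum over $E\in\mathcal{E}_D$.

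The only organisational difference worth noting is where the Jacobian weights enter. You first prove the classical unweighted $L^2$ and $H^1$ bounds on $K$ and then multiply by $\lVert J\rVert_{L^\infty(K)}$, respectively $\lVert a\rVert_{L^\infty(K)}$, invoking the Lipschitz-in-$X$ regularity of $\log J$ and of the maximal eigenvalue $a$ to replace these by $J_1^{\omega_E}$, $a_1^{\omega_E}$. The paper instead pulls $|\nabla_X(\hat v_h-\mathcal{A}_h\hat v_h)|_{L^\infty(K)}^2$ out of the $U_K$-integral directly (the difference is polynomial), leaving $\int_K a=|K|\,a_1^K$, and then carries the $L^1$-averages $a_1^K$, $J_1^K$ through the nodal telescoping. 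Both approaches ultimately rely on the same fact---that $J$ and $a$ are comparable on vertex patches with a constant independent of $h$ and $t$---which you make explicit and the paper leaves implicit in the passage from $a_1^{\omega_\nu}$ to $a_1^{\omega_E}$. Your route is slightly more modular (the unweighted estimate is quotable from \cite{karakashian03} verbatim); the paper's keeps the weight bookkeeping tied to the combinatorics from the outset.
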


\begin{proof}

We build the approximation with Lagrangian nodes and use the property of polynomials to conclude.

\noindent For each $K\in \mathcal{T}_h$, let $\mathcal{N}_K = \{x_K^{(j)}\hspace{0.5mm}\colon\hspace{0.5mm} j=1,\dots , m\}$ be the set of distinct nodes of $K$ with nodes on both ends. Let $\{\phi_K^{(j)}\hspace{0.5mm}\colon\hspace{0.5mm} j=1,\dots , m\}$ be a local basis of functions satisfying $\phi_K^{(i)}(x_K^{(j)})=\delta_{ij}$, the Kronecker-$\delta$. Let $\mathcal{N} = \underset{K\in \mathcal{T}_h}{\bigcup}\mathcal{N}_K$ be the set of nodes and

\begin{align*}
    &\mathcal{N}_D = \{\nu \in \mathcal{N} \hspace{1mm}\colon\hspace{1mm} \nu \in E \in \mathcal{E}^{\mbox{\tiny ext}}_D\}, & \mathcal{N}_N = \{\nu \in \mathcal{N} \hspace{1mm}\colon\hspace{1mm} \nu \in E \in \mathcal{E}^{\mbox{\tiny ext}}_N\} \\
    &\mathcal{N}_i = \{ \nu \in \mathcal{N}\setminus\mathcal{N}_D \hspace{1mm}\colon\hspace{1mm} |\omega_\nu|=1\}, &\mathcal{N}_v = \mathcal{N}\setminus(\mathcal{N}_i \cup \mathcal{N}_D)
\end{align*}

\noindent For each  $\nu \in \mathcal{N}$, we define $\omega_\nu = \{K\in \mathcal{T}_h \hspace{0.5mm}\colon\hspace{0.5mm} \nu \in K \}$ . $|\omega_\nu|$ is uniformly bounded by a constant depending only on $\xi_0$. Finally, let $\bar{\mathcal{N}}$ be the collection of distinct Lagrange nodes $\nu$ needed to build an element of $V_h^c$. In this case (conforming mesh) we have $\bar{\mathcal{N}}=\mathcal{N}$. To each $\nu \in \bar{\mathcal{N}}$ we associate a basis function $\phi^{(\nu)}$:

\begin{equation*}
    \mbox{supp} \hspace{0.5mm} \phi^{(\nu)} \subset \underset{K \in \omega_\nu}{\bigcup} K, \; \phi^{(\nu)}|_{K} = \phi_K^{(j)}, \hspace{1mm} x_K^{(j)}=\nu.
\end{equation*}

\noindent Write $\hat{v}_h \in V_h$ as $\hat{v}_h = \underset{K \in \mathcal{T}_h}{\sum} \underset{j=1}{\overset{m}{\sum}} \alpha^{(j)}_K\phi^{(j)}_K$ and define

\begin{equation*}
    \mathcal{A}_h \hat{v}_h = \underset{\nu \in \bar{\mathcal{N}}}{\sum} \beta ^{(\nu)}\phi ^{(\nu)}\mbox{, where } \beta ^{(\nu)} = \left \{ \begin{array}{ll}
         0 & \mbox{ if } \nu \in \mathcal{N}_D \\
         \frac{1}{|\omega_\nu|} \sum_{x^{(j)}_K=\nu} \alpha_K^{(j)} & \mbox{ if } \nu \in \bar{\mathcal{N}}\setminus\mathcal{N}_D
    \end{array} \right.     
\end{equation*}

\noindent $\mathcal{A}_h \hat{v}_h$ is a continuous polynomial that interpolates $\hat{v}_h$ in the inner Lagrangian nodes of the mesh. For the Lagrangian nodes lying on the vertices and edges of the mesh it has the value of the average values of the neighbouring cells.

\noindent We define now $\beta_K^{(j)} \! =\! \beta^{(\nu)}$ if $x_K^{(j)}\! =\! \nu$. We have $\lVert \phi_K^{(j)} \rVert ^2_{L\! ^\infty\! (K)} \! \lesssim \! 1$ and $\lVert \nabla \phi_K^{(j)} \rVert ^2_{L\! ^\infty\! (K)} \! \lesssim \! h_K^{-2}$.

\begin{align*}
    \mbox{Thus: }\underset{K \in \mathcal{T}_h}{\sum} |\hat{v}_h - \mathcal{A}_h \hat{v}_h|^2_{U_K} & \lesssim \underset{K \in \mathcal{T}_h}{\sum} a_1^{K} \; h_K^2 \; |\nabla_X(\hat{v}_h - \mathcal{A}_h \hat{v}_h)|^2_{L^\infty(K)} \\
    & \lesssim \underset{K \in \mathcal{T}_h}{\sum}  a_1^{K} \underset{j=1}{\overset{m}{\sum}} |\alpha_K^{(j)} - \beta_K^{(j)}|^2  \lesssim \underset{\nu \in \mathcal{N}}{\sum}  a_1^{\omega_\nu}  \underset{x^{(j)}_K=\nu}{\sum} |\alpha_K^{(j)} - \beta^{(\nu)}|^2 \\
    & \lesssim \underset{\nu \in \mathcal{N}_v}{\sum} a_1^{\omega_\nu} \underset{x^{(j)}_K=\nu}{\sum} |\alpha_K^{(j)} - \beta^{(\nu)}|^2 + \underset{\nu \in \mathcal{N}_D}{\sum} a_1^{\omega_\nu} \underset{x^{(j)}_K=\nu}{\sum} |\alpha_K^{(j)}|^2 \\
    \underset{K \in \mathcal{T}_h}{\sum} \lVert \hat{v}_h - \mathcal{A}_h \hat{v}_h\rVert ^2_{H_K} & \lesssim \underset{\nu \in \mathcal{N}_v}{\sum} J_1^{\omega_\nu} h_\nu^2 \underset{x^{(j)}_K=\nu}{\sum} |\alpha_K^{(j)} - \beta^{(\nu)}|^2 + \underset{\nu \in \mathcal{N}_D}{\sum} J_1^{\omega_\nu} h_\nu^2 \underset{x^{(j)}_K=\nu}{\sum} |\alpha_K^{(j)}|^2
\end{align*}

\noindent where we used $\alpha_K^{(j)}=\beta^{(\nu)}$ for $\nu \in \mathcal{N}_i$. 

\noindent Now let $A = \underset{\nu \in \mathcal{N}_v}{\sum} a_1^{\omega_\nu} \underset{x^{(j)}_K=\nu}{\sum} |\alpha_K^{(j)} - \beta^{(\nu)}|^2  $ and $B = \underset{\nu \in \mathcal{N}_D}{\sum} a_1^{\omega_\nu} \underset{x^{(j)}_K=\nu}{\sum} |\alpha_K^{(j)}|^2$.

\noindent Let $\nu \in \mathcal{N}_v$ s.t. $\omega_\nu =\{ K_1,\dots, K_{|\omega_\nu|}\}$ with $\mu_{d-1}(K_l \cap K_{l+1})>0$ then there is a constant depending only on $|\omega_\nu|$ and so $\xi_0$ such that

\begin{equation*}
    \underset{x^{(j)}_K=\nu}{\sum} |\alpha_K^{(j)} - \beta^{(\nu)}|^2 \leq c \underset{l=1}{\overset{|\omega_\nu|-1}{\sum}} |\alpha_{K_l}^{(j_l)} - \alpha_{K_{l+1}}^{(j_{l+1})}|^2.\mbox{ Then } A \lesssim \underset{\nu \in \mathcal{N}_v}{\sum}  a_1^{\omega_\nu} \underset{l=1}{\overset{|\omega_\nu|-1}{\sum}} |\alpha_{K_l}^{(j_l)} - \alpha_{K_{l+1}}^{(j_{l+1})}|^2.
\end{equation*}

\noindent Similarly when $\nu \!\in\! \mathcal{N}_D$ and $|\omega_\nu|\! >\! 1$ s.t. $\omega_\nu \! = \! \{ K_1,\dots, K_{|\omega_\nu|}\}$ with $\mu_{d-1}(K_l \cap K_{l+1})\! >\! 0$:

\begin{align*}
    & \underset{x_K^{(j)}=\nu}{\sum}\! |\alpha_K^{(j)}|^2\! \lesssim\! \underset{l=1}{\overset{|\omega_\nu|\! -\! 1}{\sum}}\! |\alpha_{K_l}^{(j_l)} - \alpha_{K_{l+1}}^{(j_{l+1})}|^2\! +\! |\alpha_{K_{|\omega_\nu|}}^{(j_{|\omega_\nu|})}\! |^2 \mbox{ where we choose } \mu_{d-1}(K_{|\omega_\nu|} \cap \Gamma_D)\! \ne\! 0.\\
    & \mbox{Writing } \omega_\nu ^D = \{ K\in \omega _\nu  \colon \mu_{d-1}(K\cap \Gamma_D) \ne 0\}, \\
    &\forall \nu \in \mathcal{N}_D \; \forall K \in \omega_\nu^D \hspace{5mm} x_K^{(j)}=\nu \implies \alpha_K^{(j)} \mbox{ is the jump over an edge of } \Gamma_D.\\
    & \mbox{Then } B \lesssim \underset{\nu \in \mathcal{N}_D}{\sum} \hspace{1mm} a_1^{\omega_\nu} (\underset{l=1}{\overset{|\omega_\nu|-1}{\sum}} |\alpha_{K_l}^{(j_l)} - \alpha_{K_{l+1}}^{(j_{l+1})}|^2 + \underset{\{(j,K) \colon K \in \omega_\nu^D, x^{(j)}_K=\nu\}}{\sum} |\alpha_K^{(j)}|^2)\\
    &\mbox{Finally, as for any } E \in \mathcal{E}^{\mbox{\tiny int}}\! : \underset{\nu \in E}{\sum} a_1^{\omega_\nu} |\alpha_{K^+}^{(j^+_\nu)}-\alpha_{K^-}^{(j^-_\nu)}|^2  \lesssim a_1^{\omega_E} | [\![  \hat{v}_h  ]\!] |^2_{L^\infty(E)}  \lesssim \frac{a_1^{\omega_E}}{h_E}\! \int_E  [\![  \hat{v}_h  ]\!] ^2\\
    &\mbox{and } \underset{\nu \in \mathcal{N}_D}{\sum} a_1^{\omega_\nu} \hspace{-0.5cm} \underset{\{(j,K) \colon K \in \omega_\nu^D, x^{(j)}_K=\nu\}}{\sum} \hspace{-1cm} |\alpha_K^{(j)}|^2  = \underset{E \in \mathcal{E}_D^{\mbox{\tiny ext}}}{\sum} \underset{\nu \in E}{\sum}a_1^{\omega_\nu} |\alpha_{K_E}^{(j_\nu)}|^2 \lesssim \underset{E \in \mathcal{E}_D^{\mbox{\tiny ext}}}{\sum} \frac{a_1^{\omega_E}}{h_E} \int_E  [\![  \hat{v}_h ]\!] ^2.\\
    &\mbox{ This concludes the proof.}
\end{align*}

\end{proof}

\begin{remark}\label{rqAh}
    Note that the operator $\mathcal{A}_h$, that is used in the proof of lemma \ref{lem:jump}, is a linear operator only depending on the mesh and not on the features. Since the computational mesh is considered static, the operator is independent of time. This result is used in lemma \ref{lem:jump}.
\end{remark}

Let's now project $u_h$ via $\mathcal{A}_h$
\begin{equation}\label{contdisc}
    \hat{u}_h=\hat{u}_h^c + \hat{u}_h^r \mbox{ with } \hat{u}_h^c = \mathcal{A}_h \hat{u}_h
\end{equation}

\noindent Here, $\hat{u}_h^c$ is a continuous projection of $\hat{u}_h$ and $\hat{u}_h^r$ catches the jumps. With this we will find a bound for

\begin{equation} \label{kappa}
    \kappa= |||\hat{u}^s-\hat{u}_h||| + |\hat{u}^s-\hat{u}_h|_A
\end{equation}

\noindent By definition and triangular inequality $\kappa \leq |||\hat{u}^s-\hat{u}_h^c||| + |\hat{u}^s-\hat{u}_h^c|_A + |||\hat{u}_h^r||| + |\hat{u}_h^r|_A$.

\noindent We then bound $\kappa$ with the error estimators. We first bound the jump term by applying lemma \ref{lem3} to $\hat{u}_h^r$. 

\begin{lemma}\label{lem4}

$|||\hat{u}_h^r||| + |\hat{u}_h^r|_A \lesssim (\underset{K \in \mathcal{T}_h}{\sum} [\frac{1}{\alpha} + 1] \eta^{t\;2}_{J_K})^\frac{1}{2}$.
\end{lemma}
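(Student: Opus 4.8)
The plan is to estimate $|||\hat{u}_h^r|||$ and $|\hat{u}_h^r|_A$ separately and add the results. The key preliminary observation is that $\hat{u}_h^c=\mathcal{A}_h\hat{u}_h\in V_h^c\subset H^1_D(\hat{\Omega})$ is continuous across interior edges and vanishes on $\Gamma_D$, so by linearity of the jump $[\![\hat{u}_h^r]\!]=[\![\hat{u}_h]\!]$ on every $E\in\mathcal{E}_D$; every jump quantity entering the norms of $\hat{u}_h^r$ is therefore a jump of $\hat{u}_h$, which is exactly the ingredient of $\eta^{t}_{J_K}$. I would also record two elementary facts used throughout: since $J\textbf{F}^{-1}\textbf{F}^{-T}$ is symmetric positive definite with $\det=1$ in $d=2$ its eigenvalues are $a$ and $a^{-1}$ with $a\geq 1$, hence pointwise $|\textbf{w}|^2\leq a\,J|\textbf{F}^{-T}\textbf{w}|^2$ (so $\int_E[\![\hat{u}_h]\!]^2\leq a_\infty^{\omega_E}\int_E J(\textbf{F}^{-T}[\![\hat{u}_h]\!])^2$) and $a_1^{\omega_E}a_\infty^{\omega_E}\geq 1$; and summing $\eta^{t\;2}_{J_K}$ over $K$ reconstitutes the full edge sum over $\mathcal{E}_D=\mathcal{E}^{\mbox{\tiny int}}\cup\mathcal{E}_D^{\mbox{\tiny ext}}$ (interior edges being counted twice with the prefactor $\tfrac12$, Dirichlet edges once).

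For $|||\hat{u}_h^r|||^2=\sum_K[\varepsilon|\hat{u}_h^r|^2_{U_K}+\beta\|\hat{u}_h^r\|^2_{H_K}]+\varepsilon j_h(\hat{u}_h^r,\hat{u}_h^r)$ I would treat the three pieces with the two inequalities of lemma \ref{lem3}. The first gives $\beta\sum_K\|\hat{u}_h^r\|^2_{H_K}\lesssim\sum_{E\in\mathcal{E}_D}\beta h_E J_1^{\omega_E}\int_E[\![\hat{u}_h]\!]^2\lesssim\sum_K\eta^{t\;2}_{J_K}$. The penalty term equals $\sum_{E\in\mathcal{E}_D}\tfrac{\varepsilon\alpha}{h_E}\int_E J(\textbf{F}^{-T}[\![\hat{u}_h]\!])^2\leq\sum_{E\in\mathcal{E}_D}a_1^{\omega_E}a_\infty^{\omega_E}\tfrac{\varepsilon\alpha}{h_E}\int_E J(\textbf{F}^{-T}[\![\hat{u}_h]\!])^2\lesssim\sum_K\eta^{t\;2}_{J_K}$. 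The second inequality of lemma \ref{lem3} gives $\varepsilon\sum_K|\hat{u}_h^r|^2_{U_K}\lesssim\varepsilon\sum_{E\in\mathcal{E}_D}h_E^{-1}a_1^{\omega_E}\int_E[\![\hat{u}_h]\!]^2=\tfrac1\alpha\sum_{E\in\mathcal{E}_D}a_1^{\omega_E}a_\infty^{\omega_E}\tfrac{\varepsilon\alpha}{h_E}\int_E J(\textbf{F}^{-T}[\![\hat{u}_h]\!])^2\lesssim\tfrac1\alpha\sum_K\eta^{t\;2}_{J_K}$ after using the eigenvalue bound; this last step is where the $\tfrac1\alpha$ of the statement appears. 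Adding the three contributions, $|||\hat{u}_h^r|||^2\lesssim(1+\tfrac1\alpha)\sum_K\eta^{t\;2}_{J_K}$.

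For $|\hat{u}_h^r|_A^2=|(\textbf{V}-\Tilde{\textbf{V}})\hat{u}_h^r|^2_*+\sum_{E\in\mathcal{E}_D}(\beta+\tfrac{\delta_\infty^{\omega_E}}{\varepsilon})h_E J_1^{\omega_E}\int_E[\![\hat{u}_h]\!]^2$ the explicit jump part is controlled as above, the $\delta$-piece after using $J_1^{\omega_E}\leq J_\infty^{\omega_E}$ and the eigenvalue bound to land it inside $\eta^{t\;2}_{J_K}$. For the Helmholtz seminorm I would first prove the abstract bound $|\textbf{q}|_*\leq\varepsilon^{-1/2}(\int_{\hat{\Omega}}J|\textbf{q}|^2)^{1/2}$ for $\textbf{q}\in H^0(\hat{\Omega})^2$: writing $\textbf{q}=\textbf{F}^{-T}(\textbf{F}^{T}\textbf{q})$ and using $\textbf{F}^{-T}\textbf{F}^{T}=\textbf{I}$, a pointwise Cauchy--Schwarz followed by Cauchy--Schwarz in $L^2(J\,dX)$ gives $\int_{\hat{\Omega}}J\textbf{q}\cdot\textbf{F}^{-T}\nabla_X\hat{v}\leq(\int_{\hat{\Omega}}J|\textbf{q}|^2)^{1/2}(\int_{\hat{\Omega}}J|\textbf{F}^{-T}\nabla_X\hat{v}|^2)^{1/2}=(\int_{\hat{\Omega}}J|\textbf{q}|^2)^{1/2}|\hat{v}|_{U(t)}\leq\varepsilon^{-1/2}(\int_{\hat{\Omega}}J|\textbf{q}|^2)^{1/2}|||\hat{v}|||$ since $\varepsilon|\hat{v}|^2_{U(t)}\leq|||\hat{v}|||^2$. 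Taking $\textbf{q}=(\textbf{V}-\Tilde{\textbf{V}})\hat{u}_h^r$ yields $|(\textbf{V}-\Tilde{\textbf{V}})\hat{u}_h^r|^2_*\leq\tfrac1\varepsilon\sum_K\delta_\infty^{K}\|\hat{u}_h^r\|^2_{H_K}$; a $\delta$-weighted refinement of the first inequality of lemma \ref{lem3} (carry the element-local weight through its proof, matching $\delta_\infty^{K}$ with $\delta_\infty^{\omega_E}$ on the relevant patches) bounds this by $\lesssim\sum_{E\in\mathcal{E}_D}\tfrac{\delta_\infty^{\omega_E}}{\varepsilon}h_E J_\infty^{\omega_E}\int_E[\![\hat{u}_h]\!]^2\lesssim\sum_K\eta^{t\;2}_{J_K}$. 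Hence $|\hat{u}_h^r|_A^2\lesssim\sum_K\eta^{t\;2}_{J_K}$, and combining via $|||\hat{u}_h^r|||+|\hat{u}_h^r|_A\lesssim(|||\hat{u}_h^r|||^2+|\hat{u}_h^r|_A^2)^{1/2}$ gives the claim.

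The hard part is the Helmholtz term: the crude bound $|(\textbf{V}-\Tilde{\textbf{V}})\hat{u}_h^r|^2_*\leq\tfrac1\varepsilon\|\textbf{V}-\Tilde{\textbf{V}}\|^2_\infty\sum_K\|\hat{u}_h^r\|^2_{H_K}$ would carry a constant depending on $\|\textbf{V}-\Tilde{\textbf{V}}\|_\infty$, which is disallowed by the meaning of $\lesssim$; circumventing it forces one to keep the Cl\'ement-type estimate of lemma \ref{lem3} element-local and to pair the local weight $\delta_\infty^{K}$ with the patch weight $\delta_\infty^{\omega_E}$ that was built into $\eta^{t}_{J_K}$ for exactly this reason. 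Everything else is bookkeeping with the definitions of $|||\cdot|||$, $|\cdot|_A$ and $\eta^{t}_{J_K}$.
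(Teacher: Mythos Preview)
Your proof is correct and follows essentially the same route as the paper: observe $[\![\hat{u}_h^r]\!]=[\![\hat{u}_h]\!]$, apply the two inequalities of Lemma~\ref{lem3} to control the $U_K$-, $H_K$-, and penalty contributions of $|||\hat{u}_h^r|||$, and bound the Helmholtz seminorm by $\varepsilon^{-1}\sum_K\delta_\infty^K\|\hat{u}_h^r\|_{H_K}^2$ via Cauchy--Schwarz before invoking a $\delta$-weighted local version of Lemma~\ref{lem3}. You are in fact more explicit than the paper in two places---spelling out the eigenvalue inequality $|\textbf{w}|^2\le a\,J|\textbf{F}^{-T}\textbf{w}|^2$ needed to convert $\int_E[\![\hat{u}_h]\!]^2$ into $a_\infty^{\omega_E}\int_E J(\textbf{F}^{-T}[\![\hat{u}_h]\!])^2$, and flagging that the Helmholtz step genuinely requires carrying the local weight $\delta_\infty^K$ through the proof of Lemma~\ref{lem3} (the paper does this in one line without comment).
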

\begin{proof}
\noindent Knowing that $ [\![  \hat{u}_h^r  ]\!] = [\![ \hat{u}_h  ]\!] $ on $\mathcal{E}_D$:

\begin{align*}
    &|||\hat{u}_h^r|||^2 + |\hat{u}_h^r|^2_A \leq \underset{K\in \mathcal{T}_h}{\sum} [\varepsilon |\hat{u}_h^r|^2_{U_K} + \beta  \lVert \hat{u}_h^r \rVert ^2_{H_K}] + |(\textbf{V}-\Tilde{\textbf{V}})\hat{u}_h^r|^2_{*} \\
    &+ \underset{E\in \mathcal{E}_D}{\sum}(\beta + \frac{\delta_\infty^{\omega_E}}{\varepsilon}) h_E J_1^{\omega_E} \int_E [\![ \hat{u}_h ]\!]^2 + \frac{\varepsilon \alpha }{h_E} \int_E  J(\textbf{F}^{-T} [\![ \hat{u}_h ]\!] )^2
\end{align*}

\begin{align*}
    &\mbox{By lemma \ref{lem3} }\left \{ \begin{array}{ll}
     \underset{K \in \mathcal{T}_h}{\sum} \varepsilon |\hat{u}_h^r|^2_{U_K} & \lesssim \alpha^{-1} \underset{E \in \mathcal{E}_D}{\sum}  a_1^{\omega_E}\frac{\alpha }{h_E} \int_E  [\![  \hat{u}_h ]\!] ^2 \lesssim \alpha^{-1} \underset{K \in \mathcal{T}_h}{\sum}  \eta^{t\;2}_{J_K}
     \\
     \underset{K \in \mathcal{T}_h}{\sum} \beta  \lVert \hat{u}_h^r \rVert ^2_{H_K} & \lesssim \underset{E \in \mathcal{E}_D}{\sum} \beta h_E J_1^{\omega_E} \int_E  [\![  \hat{u}_h ]\!] ^2 \lesssim \underset{K \in \mathcal{T}_h}{\sum}  \eta^{t\;2}_{J_K}
\end{array}
\right.
\end{align*}

\begin{align*}
    \mbox{And: } |(\textbf{V}-\Tilde{\textbf{V}})\hat{u}_h^r|^2_* & \leq \underset{\hat{v} \in H^1_D(\hat{\Omega})\colon |||\hat{v}|||=1}{\mbox{sup}} ( \lVert  (\textbf{V}-\Tilde{\textbf{V}})\hat{u}_h^r \rVert ^2_{H} \cdot |\hat{v}|_{U}^2) \leq \frac{1}{\varepsilon}  \lVert (\textbf{V}-\Tilde{\textbf{V}})\hat{u}_h^r \rVert ^2_{H(t)} \\
    & \leq \frac{1}{\varepsilon} \underset{K\in \mathcal{T}_h}{\sum} \delta_\infty^K  \lVert \hat{u}_h^r \rVert ^2_{H_K} \lesssim \underset{E\in \mathcal{E}_D}{\sum} \frac{h_E\delta_\infty^{\omega_E}}{\varepsilon} J_1^{\omega_E} \int_E  [\![  \hat{u}_h ]\!] ^2  \lesssim \underset{K \in \mathcal{T}_h}{\sum}  \eta^{t\;2}_{J_K}
\end{align*}
\end{proof}


\noindent Finally, we want to use the inf-sup condition to bound the continuous part of $\kappa$, namely $|||\hat{u}^s-\hat{u}_h^c||| + |\hat{u}^s-\hat{u}_h^c|_A $. To do so we present a lemma of approximation of function of $H^1_D(\hat{\Omega})$ by continuous, piecewise polynomials. This is done for static meshes in \cite{verfurth05} with Clement-type interpolant and I will do it as well here.

\noindent We denote $\mathcal{N}_h$ the vertices of the mesh and $\mathcal{N}_N$ the ones not lying on the Dirichlet boundary and define a nodal basis function $\lambda_y$ for $y \in \mathcal{N}_N$

\begin{align*}
    &\lambda_{y|K}\in \mathcal{P}_1(K) \; \forall K \in \mathcal{T}_h, \; \forall z \in \mathcal{N}_h-\{y\} \; \lambda_y(z)=0 \mbox{ and } \lambda_y(y)=1.\\
    &\mbox{And }\mathcal{I}_h \colon L^1(\hat{\Omega})  \to \{ \varphi \in C(\hat{\Omega}) \colon \varphi|_K \in S_1(K), \varphi =0 \mbox{ on } \Gamma_D \}, \; \hat{v}  \mapsto \underset{y \in \mathcal{N}_N}{\sum}  \lambda_y  \frac{1}{\omega_y^1 }\int_{\omega_y}  \hat{v} \\
\end{align*}

\begin{lemma}\label{lem5}

For all $\hat{v}\in H_D^1(\hat{\Omega})$, there is $|\mathcal{I}_h \hat{v}|_{U_K}^2 \lesssim a_1^{K} a_\infty^{\omega_K} |\hat{v}|_{U_{\omega_K}}^2$ and

\begin{align}
    \underset{K\in \mathcal{T}_h}{\sum} \rho_K^{-2}  \lVert \hat{v}- \mathcal{I}_h \hat{v} \rVert ^2_{L^2(K)} & \lesssim |||\hat{v}|||^2 \label{I1}\\
    \underset{E\in \mathcal{E}}{\sum}\sqrt{ \frac{\varepsilon}{a_\infty^{\omega_E}}}\rho_E^{-1} \int_E (\hat{v}- \mathcal{I}_h \hat{v})^2 & \lesssim |||\hat{v}|||^2 \label{I2}
\end{align}

\end{lemma}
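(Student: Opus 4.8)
The plan is to reduce the whole statement to the classical Cl\'ement-type estimates for the quasi-interpolant $\mathcal{I}_h$ — which maps into the continuous piecewise \emph{affine} space irrespective of the polynomial degree $p\ge 1$ of $V_h$, and this is enough since only a first-order approximation is needed — and then to exchange the Euclidean seminorms for the weighted ones $|\cdot|_U$, $\lVert\cdot\rVert_H$ by means of the spectral structure of the flow map. First I would record, following \cite{verfurth05}, the standard local properties of $\mathcal{I}_h$ on shape-regular, locally quasi-uniform meshes, all independent of $t$, $\varepsilon$, $\delta$: the $H^1$-stability $|\mathcal{I}_h\hat{v}|_{H^1(K)}\lesssim|\hat{v}|_{H^1(\omega_K)}$, the $L^2$-stability $\lVert\mathcal{I}_h\hat{v}\rVert_{L^2(K)}\lesssim\lVert\hat{v}\rVert_{L^2(\omega_K)}$, the first-order approximation $\lVert\hat{v}-\mathcal{I}_h\hat{v}\rVert_{L^2(K)}\lesssim h_K|\hat{v}|_{H^1(\omega_K)}$, and the scaled trace bounds $\lVert\hat{v}-\mathcal{I}_h\hat{v}\rVert_{L^2(E)}^2\lesssim h_E|\hat{v}|_{H^1(\omega_E)}^2$ and $\lVert\hat{v}-\mathcal{I}_h\hat{v}\rVert_{L^2(E)}^2\lesssim h_E^{-1}\lVert\hat{v}-\mathcal{I}_h\hat{v}\rVert_{L^2(\omega_E)}^2+h_E|\hat{v}|_{H^1(\omega_E)}^2$. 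The one subtlety is for patches $\omega_y$ meeting $\Gamma_D$, where $\mathcal{I}_h\hat{v}$ is set to vanish at $y$: there one replaces the Poincar\'e step by a Poincar\'e--Friedrichs inequality on the patch, using the vanishing trace of $\hat{v}\in H^1_D(\hat{\Omega})$.

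The bridge to the weighted quantities is $\det(J\textbf{F}^{-1}\textbf{F}^{-T})=1$ (here $d=2$): $J\textbf{F}^{-1}\textbf{F}^{-T}$ is symmetric positive definite with eigenvalues $a$ and $1/a$, so $a\ge 1$ pointwise, and together with $J=1/M$ this yields, for any $\hat{\omega}\subset\hat{\Omega}$,
\begin{equation*}
|\hat{v}|_{H^1(\hat{\omega})}^2\le a_\infty^{\hat{\omega}}\,|\hat{v}|_{U_{\hat{\omega}}}^2,\qquad \lVert\hat{v}\rVert_{L^2(\hat{\omega})}^2\le M_\infty^{\hat{\omega}}\,\lVert\hat{v}\rVert_{H_{\hat{\omega}}}^2 .
\end{equation*}
The first claim of the lemma then follows at once: since $\mathcal{I}_h\hat{v}$ is affine on $K$, $\nabla_X\mathcal{I}_h\hat{v}$ is a constant vector $c$ there, so $|\mathcal{I}_h\hat{v}|_{U_K}^2=c^{T}\big(\int_K J\textbf{F}^{-1}\textbf{F}^{-T}\big)c\le |c|^2\!\int_K a=a_1^K|\mathcal{I}_h\hat{v}|_{H^1(K)}^2\lesssim a_1^K|\hat{v}|_{H^1(\omega_K)}^2\le a_1^K a_\infty^{\omega_K}|\hat{v}|_{U_{\omega_K}}^2$.

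For \eqref{I1} I would use that $\rho_K^{-1}$ is the maximum of the reciprocals of the two branches of $\rho_K$, split $\rho_K^{-2}\lesssim(a_\infty^{\omega_K}h_K/\sqrt{\varepsilon})^{-2}+(\text{second branch})^{-2}$, and bound each summand with the matching Cl\'ement estimate. On the diffusion branch, $\frac{\varepsilon}{(a_\infty^{\omega_K})^2 h_K^2}\lVert\hat{v}-\mathcal{I}_h\hat{v}\rVert_{L^2(K)}^2\lesssim\frac{\varepsilon}{(a_\infty^{\omega_K})^2}|\hat{v}|_{H^1(\omega_K)}^2\le\frac{\varepsilon}{a_\infty^{\omega_K}}|\hat{v}|_{U_{\omega_K}}^2\le\varepsilon|\hat{v}|_{U_{\omega_K}}^2$ by $a\ge 1$; on the reaction branch, $L^2$-stability of $\mathcal{I}_h$ and $\lVert\hat{v}\rVert_{L^2(\omega_K)}^2\le M_\infty^{\omega_K}\lVert\hat{v}\rVert_{H_{\omega_K}}^2$ give a bound by $\beta\lVert\hat{v}\rVert_{H_{\omega_K}}^2$. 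Summing over $K$ and using the uniformly bounded overlap of the patches $\omega_K$ collapses the right-hand side into $\sum_K(\varepsilon|\hat{v}|_{U_K}^2+\beta\lVert\hat{v}\rVert_{H_K}^2)\le|||\hat{v}|||^2$. For \eqref{I2} I would insert the scaled trace bound, treat the term $h_E|\hat{v}|_{H^1(\omega_E)}^2$ exactly as the diffusion branch above, and reduce the volume term $h_E^{-1}\lVert\hat{v}-\mathcal{I}_h\hat{v}\rVert_{L^2(\omega_E)}^2$ to quantities already controlled by a second application of the Cl\'ement estimates, using in the reaction regime the defining inequality $\rho_E\le a_\infty^{\omega_E}h_E/\sqrt{\varepsilon}$ to convert $\varepsilon$-weights into $h_E$-weights, again summing with finite overlap.

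I expect the interpolation theory itself to be routine; the hard part will be the weight bookkeeping — making sure every occurrence of $a$, $M$, $J$ and $\rho$ is absorbed into the constants behind $\lesssim$ so that they depend neither on $h$, $\varepsilon$ and $\delta$ nor on $t$. This is where $a\ge 1$, the two spectral equivalences above, and the diffusion-versus-reaction dichotomy built into $\rho$ have to be balanced against one another, and it is most delicate on the Dirichlet patches, where the homogeneous boundary condition must be exploited to obtain the $L^2$- and trace-approximation estimates in the first place.
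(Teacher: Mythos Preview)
Your proposal is correct and follows essentially the same route as the paper: invoke the standard Cl\'ement/Verf\"urth interpolation estimates (the paper cites \cite{verfurth05}, Lemma~5.1 for the $H^1$-stability and \cite{verfurth982}, Lemmas~3.1--3.2 for the $L^2$- and trace-approximation), then pass to the weighted norms via the spectral bounds $|\hat{v}|_{H^1(\hat{\omega})}^2\le a_\infty^{\hat{\omega}}|\hat{v}|_{U_{\hat{\omega}}}^2$ and $\lVert\hat{v}\rVert_{L^2(\hat{\omega})}^2\le M_\infty^{\hat{\omega}}\lVert\hat{v}\rVert_{H_{\hat{\omega}}}^2$, with the diffusion/reaction dichotomy in $\rho_K$ handled exactly as you describe. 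Your treatment is in fact more explicit than the paper's about the key algebraic fact $a\ge 1$ (from $\det(J\textbf{F}^{-1}\textbf{F}^{-T})=1$ in $d=2$) and about the Dirichlet-patch issue; the paper absorbs both into the Verf\"urth citations.
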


\begin{proof}

Since $\mathcal{I}_h \hat{v}$ is piecewise affine, we can write $\nabla_X \mathcal{I}_h \hat{v} = n_{\hat{v}} ^K$. Then, \cite{verfurth05}, \textbf{lemma 5.1} states that $|K| \cdot  \lVert n_{\hat{v}}^K \rVert ^2 \lesssim  \lVert \nabla _X\hat{v} \rVert _{L^2(\omega_K)}^2$. This proves the first inequality.

\begin{equation*}
    \mbox{\textbf{Lemma 3.1} in \cite{verfurth982} yields } \left \{ \! \begin{array}{ll}
          \frac{\beta}{M_\infty^{\omega_K}} \! \int_K \! (\hat{v}- \mathcal{I}_h \hat{v})^2 & \lesssim \! \frac{\beta}{M_\infty^{\omega_K}} \! \lVert \hat{v} \rVert ^2_{L^2(\omega_K)} \! \lesssim \! \beta  \lVert \hat{v} \rVert ^2_{H_{\omega_K}}\\
         \frac{\varepsilon}{h_K^2 a_\infty^{\omega_K}} \! \int_K \! (\hat{v}- \mathcal{I}_h \hat{v})^2 & \lesssim \! \frac{\varepsilon}{a_\infty^{\omega_K}} \!  \lVert \nabla_X \hat{v} \rVert ^2_{L^2(\omega_K)} \! \lesssim \! \varepsilon |\hat{v}|^2_{U_{\omega_K}}
\end{array}
\right .\\
\end{equation*}

\noindent which proves \eqref{I1}. \eqref{I2} follows with the technique in \textbf{lemma 3.2} in \cite{verfurth982}.
\end{proof}

\noindent We can bound $|||\hat{u}^s-\hat{u}_h^c||| + |\hat{u}^s-\hat{u}_h^c|_A$ with the inf-sup property and the previous lemmas.

\begin{lemma}\label{lem6}
$|||\hat{u}^s\!-\!\hat{u}_h^c|||^2 \!+\! |\hat{u}^s\!-\!\hat{u}_h^c|_A^2 \lesssim \underset{K \in \mathcal{T}_h}{\sum} (1+\frac{1}{\alpha}) \eta_K^{t\; 2}$ with $\hat{u}_h^c$ defined in \eqref{contdisc}.
\end{lemma}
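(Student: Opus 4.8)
The plan is to combine the inf-sup stability of Lemma~\ref{lem:infsup} with a residual identity for $\hat\sigma_c := \hat u^s-\hat u_h^c$. Since $\hat u^s,\hat u_h^c\in H_D^1(\hat\Omega)$, the function $\hat\sigma_c$ carries no jumps on any edge of $\mathcal E_D$, so $j_h(\hat\sigma_c,\hat\sigma_c)=0$ and $|\hat\sigma_c|_A=|(\textbf V-\tilde{\textbf V})\hat\sigma_c|_*$; hence the quantity to bound is exactly $|||\hat\sigma_c|||+|(\textbf V-\tilde{\textbf V})\hat\sigma_c|_*$. By Lemma~\ref{lem:infsup} there is $\hat w\in H_D^1(\hat\Omega)$ with $|||\hat w|||=1$ such that $|||\hat\sigma_c|||+|(\textbf V-\tilde{\textbf V})\hat\sigma_c|_*\lesssim\tilde a_h(\hat\sigma_c,\hat w)$, so it suffices to bound $\tilde a_h(\hat\sigma_c,\hat w)$ by $(1+\tfrac1\alpha)^{1/2}\bigl(\sum_K\eta^{t\;2}_K\bigr)^{1/2}$.

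The next step is to turn $\tilde a_h(\hat\sigma_c,\hat w)$ into a residual. Since the jump, penalty and upwind terms of $a_h$ vanish on $H_D^1(\hat\Omega)\times H_D^1(\hat\Omega)$, there $\tilde a_h=a_h=a(t;\cdot,\cdot)$, so $\tilde a_h(\hat\sigma_c,\hat w)=a(t;\hat u^s,\hat w)-a_h(\hat u_h^c,\hat w)$; then \eqref{eq:us} replaces $a(t;\hat u^s,\hat w)$ by $l(\hat w)-\int_{\hat\Omega}J\partial_t\hat u_h\,\hat w$. Writing $\hat u_h^c=\hat u_h-\hat u_h^r$, inserting the Clément-type interpolant $\hat w_h:=\mathcal I_h\hat w\in V_h^c$ of Lemma~\ref{lem5}, and using the discrete equation \eqref{weakform:discr} with test function $\hat w_h$, one arrives at
\begin{equation*}
\tilde a_h(\hat\sigma_c,\hat w)=\bigl(l(\hat w)-l_h(\hat w_h)\bigr)-\int_{\hat\Omega} J\tfrac{\partial\hat u_h}{\partial t}(\hat w-\hat w_h)-a_h(\hat u_h,\hat w-\hat w_h)+a_h(\hat u_h^r,\hat w),
\end{equation*}
with $l(\hat w)-l_h(\hat w_h)=\int_{\hat\Omega}J\hat f(\hat w-\hat w_h)+\int_{\Gamma_N}J\hat u_N(\hat w-\hat w_h)|\textbf F^{-T}\textbf n_r|$. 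Integrating by parts elementwise in $a_h(\hat u_h,\hat w-\hat w_h)$ — legitimate since $\hat w-\hat w_h$ is continuous, so the penalty term and the first half of the SIPG-flux term drop — splits the right-hand side into (i) the interior residual $J\hat f-J\partial_t\hat u_h+\varepsilon\nabla_X\!\cdot\{J\textbf F^{-1}\textbf F^{-T}\nabla_X\hat u_h\}-J(\textbf V-\tilde{\textbf V})\cdot\textbf F^{-T}\nabla_X\hat u_h$ tested against $\hat w-\hat w_h$; (ii) the interior flux jumps $\varepsilon[\![J\textbf F^{-1}\textbf F^{-T}\nabla_X\hat u_h]\!]$ and the Neumann residual $\hat u_N-\varepsilon J\textbf F^{-1}\textbf F^{-T}\nabla_X\hat u_h\cdot\textbf n$ tested against $\hat w-\hat w_h$; (iii) the upwind convective jump $\sum_K\int_{\partial K^t_{\mathrm{in}}}J(\textbf V-\tilde{\textbf V})\cdot\textbf F^{-T}[\![\hat u_h]\!](\hat w-\hat w_h)$; and (iv) a $\theta$-dependent SIPG term in $[\![\hat u_h]\!]$. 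The key algebraic point is that term (iv), combined with the $\tilde p_h(\hat u_h^r,\hat w)$ piece inside $a_h(\hat u_h^r,\hat w)=\tilde a_h(\hat u_h^r,\hat w)-\tilde p_h(\hat u_h^r,\hat w)$ and using $[\![\hat u_h^r]\!]=[\![\hat u_h]\!]$ on $\mathcal E_D$, collapses to exactly $-\tilde p_h(\hat u_h,\hat w_h)$ with the \emph{polynomial} test function $\hat w_h\in V_h^c$ — precisely the form that Lemma~\ref{lem2} can handle.

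Then I would estimate the four groups. Cauchy--Schwarz with \eqref{I1} bounds (i) by $\bigl(\sum_K\eta^{t\;2}_{R_K}\bigr)^{1/2}|||\hat w|||$ (the weight $\rho_K$ in $\eta^{t\;2}_{R_K}$ is the one appearing in \eqref{I1}); Cauchy--Schwarz with \eqref{I2} and $\rho_E$ bounds (ii) by $\bigl(\sum_K\eta^{t\;2}_{E_K}\bigr)^{1/2}|||\hat w|||$ and (iii) by $\bigl(\sum_K\eta^{t\;2}_{J_K}\bigr)^{1/2}|||\hat w|||$ (its $\tfrac{\delta^{\omega_E}_\infty}{\varepsilon}h_E J^{\omega_E}_\infty$ part). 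For $-\tilde p_h(\hat u_h,\hat w_h)$, Lemma~\ref{lem2} followed by the first estimate of Lemma~\ref{lem5}, $|\mathcal I_h\hat w|^2_{U_K}\lesssim a_1^K a_\infty^{\omega_K}|\hat w|^2_{U_{\omega_K}}$, makes the $a$-weights cancel and leaves $\sqrt\varepsilon\,|\hat w|_{U}\lesssim|||\hat w|||$, giving $\lesssim\alpha^{-1/2}\bigl(\sum_K\eta^{t\;2}_{J_K}\bigr)^{1/2}|||\hat w|||$. Finally $\tilde a_h(\hat u_h^r,\hat w)=d_h(\hat u_h^r,\hat w)+f_h(\hat u_h^r,\hat w)$ (the $j_h$-part vanishes because $[\![\hat w]\!]=0$), and \eqref{ineq:dh}, \eqref{ineq:fh} bound it by $(|||\hat u_h^r|||+|(\textbf V-\tilde{\textbf V})\hat u_h^r|_*)\,|||\hat w|||\le(|||\hat u_h^r|||+|\hat u_h^r|_A)\,|||\hat w|||$, which Lemma~\ref{lem4} controls by $\bigl(\sum_K[1+\tfrac1\alpha]\eta^{t\;2}_{J_K}\bigr)^{1/2}|||\hat w|||$. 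The $J$-weighted versus unweighted edge integrals are reconciled throughout with \eqref{ineq:E}, \eqref{ineq:Hnorm}, \eqref{ineq:Unorm}. Collecting the contributions, dividing by $|||\hat w|||=1$, squaring, and using $(1+\alpha^{-1/2})^2\le 2(1+\alpha^{-1})$ gives the stated bound.

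The main obstacle is the bookkeeping in the elementwise integration by parts: one must check that every boundary, jump and projection term that appears carries exactly the weights ($a_1^{\omega_E}$, $a_\infty^{\omega_E}$, $J_1^{\omega_E}$, $J_\infty^{\omega_E}$, $\rho_E$, $\rho_K$) built into $\eta^{t\;2}_{J_K}$, $\eta^{t\;2}_{E_K}$, $\eta^{t\;2}_{R_K}$ in \eqref{west}, so that each Cauchy--Schwarz splitting pairs off with the Clément estimates of Lemma~\ref{lem5}, together with the reconciliation of the Neumann boundary term ($\hat u_N$, $\textbf n$ versus $\textbf n_r$, $|\textbf F^{-T}\textbf n_r|$). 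The second delicate point is the grouping in (iv): the expansions of $a_h(\hat u_h,\hat w-\hat w_h)$ and of $a_h(\hat u_h^r,\hat w)$ must be arranged so that the two $\theta$-terms fuse into a single $\tilde p_h$ evaluated at the conforming interpolant $\hat w_h$, since Lemma~\ref{lem2} is not available for a general, non-polynomial test function.
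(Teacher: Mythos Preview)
Your proposal is correct and follows essentially the same route as the paper: inf-sup reduction via Lemma~\ref{lem:infsup}, a residual identity obtained by inserting the Cl\'ement interpolant $\mathcal I_h\hat w$ and the discrete equation, integration by parts to produce the interior residual, flux-jump and upwind pieces bounded by Lemma~\ref{lem5}, the surviving flux term $\tilde p_h(\hat u_h,\mathcal I_h\hat w)$ handled by Lemma~\ref{lem2}, and the remainder $\tilde a_h(\hat u_h^r,\hat w)$ controlled by continuity (Lemma~\ref{lem:continuity}) together with Lemma~\ref{lem4}. The only cosmetic difference is that the paper writes the residual with $\tilde a_h$ from the start (so no $\theta$-flux term ever appears and $\tilde p_h(\hat u_h,\mathcal I_h\hat v)$ comes directly from $a_h=\tilde a_h-\tilde p_h$ applied to the discrete equation), whereas you keep $a_h$ and then explicitly collapse the leftover $\theta$-term with $-\tilde p_h(\hat u_h^r,\hat w)$ into $-\tilde p_h(\hat u_h,\hat w_h)$; the resulting terms to estimate are identical.
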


\begin{proof}

Let $\hat{v} \in H^1_D(\hat{\Omega})$, we first bound $T(\hat{v}) = l(\hat{v}-\mathcal{I}_h \hat{v}) - \tilde{a}_h(\hat{u}_h,\hat{v}-\mathcal{I}_h \hat{v})$. $T\!=\!T_1\!+\!T_2\!+\!T_3$ with 

\begin{equation*}
    \left \{  \begin{array}{ll}     T_1(\hat{v}) =  \underset{K \in \mathcal{T}_h}{\sum} \!  \int_K \! J  \!(\!\hat{f}\! -\!\frac{\partial \hat{u}_h}{\partial t}  \! + \! \frac{\varepsilon}{J}  \nabla _X \! \cdot\!  \{J \textbf{F}^{-1} \textbf{F}^{-T} \nabla _X \hat{u}_h\} \! -\! (\textbf{V}\! -\! \Tilde{\textbf{V}})\! \cdot\!  \textbf{F}^{-T}\nabla _X \hat{u}_h)(\hat{v}\! -\! \mathcal{I}_h \hat{v})  \\
     T_2(\hat{v}) =    - \varepsilon \underset{K \in \mathcal{T}_h}{\sum}  \int_{\partial K} (J\textbf{F}^{-1} \textbf{F}^{-T} \nabla_X \hat{u}_h\cdot \textbf{n}_K)(\hat{v}-\mathcal{I}_h \hat{v})\\
     T_3(\hat{v}) =  -\underset{K \in \mathcal{T}_h}{\sum}  \int_{\partial K_{\mbox{\tiny in}}^t-\Gamma_r} (J(\textbf{V}-\Tilde{\textbf{V}}) \cdot \textbf{F}^{-T} [\![  \hat{u}_h  ]\!]) (\hat{v}-\mathcal{I}_h \hat{v})
\end{array}
\right.
\end{equation*}

\noindent And by Cauchy-Schwarz and lemma \ref{lem5}

\begin{multline} \label{Ts}
    |T_1|  \! \lesssim \! (\!\underset{K\in \mathcal{T}_h}{\sum} \eta_{R_K}^{t\; 2}\!)^\frac{1}{2} |||\hat{v}|||, \; |T_2| \! \lesssim \! (\!\underset{K \in \mathcal{T}_h}{\sum} \eta_{E_K}^{t\; 2}\!)^\frac{1}{2} |||\hat{v}||| \mbox{ and } |T_3| \! \lesssim \!  (\!\underset{K \in \mathcal{T}_h}{\sum} \eta_{J_K}^{t\; 2}\!)^\frac{1}{2} |||\hat{v}||| 
\end{multline}

\noindent There is  $l(\mathcal{I}_h \hat{v})\!  =\!\tilde{a}_h(\hat{u}_h,\mathcal{I}_h \hat{v})\!+\!\Tilde{p}_h(\hat{u}_h,\mathcal{I}_h \hat{v})$ thus 

\begin{center}
    $\tilde{a}_h(\hat{u}^s\! -\!\hat{u}_h^c,\hat{v}) \!= \! l(\hat{v}) \! -\! \tilde{a}_h(\hat{u}_h,\hat{v})\! +\! \tilde{a}_h(\hat{u}_h^r,\hat{v})\!=\! T(\hat{v})\!+\! \tilde{a}_h(\hat{u}_h^r,\! \hat{v})\! +\! \Tilde{p}_h(\hat{u}_h,\!\mathcal{I}_h \hat{v})$.
\end{center} 

\noindent Therefore by \eqref{Ts} and lemmas \ref{lem:continuity}, \ref{lem2}, \ref{lem4}, \ref{lem5}

\begin{equation}\label{aheta}
    |\tilde{a}_h(\hat{u}^s-\hat{u}_h^c,\hat{v})| \lesssim \{(\underset{K\in \mathcal{T}_h}{\sum} \eta_{K}^{t\; 2})^\frac{1}{2} + (\underset{K \in \mathcal{T}_h}{\sum} [\frac{1}{\alpha} + 1] \eta_{J_K}^{t\; 2})^\frac{1}{2} +(\underset{K\in \mathcal{T}_h}{\sum} \frac{1}{\alpha} \eta_{J_K}^{t\; 2})^{\frac{1}{2}}\} |||\hat{v}|||. \\
\end{equation}

\noindent And by noting that $|\hat{u}^s-\hat{u}_h^c|_A= |(\textbf{V}-\Tilde{\textbf{V}})(\hat{u}^s-\hat{u}_h^c)|_*$, we can use lemma \ref{lem:infsup}

\begin{multline*}
    |||\hat{u}^s-\hat{u}_h^c||| + |\hat{u}^s-\hat{u}_h^c|_A \lesssim \underset{\hat{v}\in H_D^1(\hat{\Omega})-\{0\}}{\sup}\frac{\tilde{a}_h(\hat{u}^s-\hat{u}_h^c,\hat{v})}{|||\hat{v}|||} \mbox{ and conclude with \eqref{aheta}.} 
\end{multline*}

\end{proof}

\noindent Finally we state theorem \ref{thm:apststd} where $\kappa$ holds the role of $\hat{\sigma}$.

\begin{lemma}\label{lemthm1}
$\kappa^2 \lesssim \underset{K \in \mathcal{T}_h}{\sum} (1+\frac{1}{\alpha}) \eta_K^{t\; 2}$ with $\kappa$ defined in \eqref{kappa}.
\end{lemma}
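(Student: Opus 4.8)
The plan is simply to combine Lemma~\ref{lem4} and Lemma~\ref{lem6} through the triangle inequality recorded just above the statement, namely
\begin{equation*}
\kappa \;\le\; \big(\,|||\hat{u}^s-\hat{u}_h^c||| + |\hat{u}^s-\hat{u}_h^c|_A\,\big) \;+\; \big(\,|||\hat{u}_h^r||| + |\hat{u}_h^r|_A\,\big),
\end{equation*}
which follows from the splitting \eqref{contdisc} because both $|||\cdot|||$ and $|\cdot|_A$ obey the triangle inequality.

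For the first bracket I would use that $\hat{u}^s\in H^1_D(\hat{\Omega})$ and $\hat{u}_h^c=\mathcal{A}_h\hat{u}_h\in V_h^c\subset H^1_D(\hat{\Omega})$, so that $\hat{u}^s-\hat{u}_h^c$ is an admissible argument for Lemma~\ref{lem6}; this gives $|||\hat{u}^s-\hat{u}_h^c|||^2+|\hat{u}^s-\hat{u}_h^c|_A^2\lesssim \sum_{K\in\mathcal{T}_h}(1+\frac{1}{\alpha})\eta_K^{t\;2}$, and then $a+b\le\sqrt{2}\,(a^2+b^2)^{1/2}$ turns the first bracket into $\lesssim \big(\sum_{K\in\mathcal{T}_h}(1+\frac{1}{\alpha})\eta_K^{t\;2}\big)^{1/2}$. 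For the second bracket, Lemma~\ref{lem4} yields $|||\hat{u}_h^r|||+|\hat{u}_h^r|_A\lesssim \big(\sum_{K\in\mathcal{T}_h}[\frac{1}{\alpha}+1]\eta_{J_K}^{t\;2}\big)^{1/2}$, and since by \eqref{west} $\eta_K^{t\;2}=\eta_{J_K}^{t\;2}+\eta_{R_K}^{t\;2}+\eta_{E_K}^{t\;2}$ is a sum of nonnegative quantities, one has $\eta_{J_K}^{t\;2}\le\eta_K^{t\;2}$, so the second bracket is bounded by the same quantity $\lesssim\big(\sum_{K\in\mathcal{T}_h}(1+\frac{1}{\alpha})\eta_K^{t\;2}\big)^{1/2}$.

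Adding the two brackets and squaring once more with $(a+b)^2\le 2a^2+2b^2$ gives $\kappa^2\lesssim \sum_{K\in\mathcal{T}_h}(1+\frac{1}{\alpha})\eta_K^{t\;2}$, which is the assertion; recalling $\hat{\sigma}=\hat{u}^s-\hat{u}_h$, this is exactly $(|||\hat{\sigma}|||+|\hat{\sigma}|_A)^2\lesssim\sum_{K\in\mathcal{T}_h}(1+\frac{1}{\alpha})\eta_K^{t\;2}$ of Theorem~\ref{thm:apststd}. There is no genuine analytic difficulty at this stage: the lemma is the final bookkeeping step that glues the jump estimate of Lemma~\ref{lem4} to the inf--sup/Cl\'ement estimate of Lemma~\ref{lem6}. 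The only subtlety to keep in mind is that the jump part of $|\hat{u}^s-\hat{u}_h|_A$ over $\mathcal{E}_D$ is carried entirely by $\hat{u}_h^r$ — indeed $\hat{u}^s$ and $\hat{u}_h^c$ are continuous and vanish on $\Gamma_D$, so $[\![\hat{u}^s-\hat{u}_h^c]\!]=0$ on $\mathcal{E}_D$ — which is precisely what makes Lemma~\ref{lem6} (stated only for the $|\cdot|_*$ contribution) and Lemma~\ref{lem4} combine without double counting.
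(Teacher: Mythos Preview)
Your proposal is correct and follows exactly the approach of the paper: the paper's proof is a one-line statement that the result ``follows directly from lemmas~\ref{lem4} and~\ref{lem6}'', and you have simply made explicit the triangle-inequality decomposition (already recorded in the paper just after the definition of $\kappa$) together with the elementary inequality $(a+b)^2\le 2a^2+2b^2$. Your remark that the jump part of $|\cdot|_A$ is carried entirely by $\hat{u}_h^r$ is the reason why, in the proof of Lemma~\ref{lem6}, one has $|\hat{u}^s-\hat{u}_h^c|_A=|(\textbf{V}-\Tilde{\textbf{V}})(\hat{u}^s-\hat{u}_h^c)|_*$, so your observation is consistent with the paper.
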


\begin{proof}
This result follows directly from lemmas \ref{lem4} and \ref{lem6}.
\end{proof}

\noindent By splitting the solution into a continuous and a discontinuous part (lemma \ref{lem3}) and bounding these two contributions we could use the inf-sup condtion and conclude with lemma \ref{lemthm1}.

\noindent This concludes the proof of the reliability of the steady-state criterion and thus the proof of the space-time criteria. 

\noindent In the next section we will look at the value of $\eta_1^t$ (which is the space-time equivalent of $\eta_K$) with a boundary-layer test case. We show in particular that the criterion is still able to represent the error when the mesh moves and that the moving mesh situation improves the solution.

\section{A boundary layer test case} \label{sec:testcase}

As presented in the introduction, the robustness of the DG method and its \textit{a posteriori} error criteria is demonstrated by its ability to resolve boundary layers stably even when the mesh Peclet number becomes large. In \cite{schotzau09} they demonstrate that the \textit{a posteriori} refinement criterion only works in this condition, the mesh Peclet number decreases, it is not efficient anymore. This section will present a boundary layer test case and compare the solution on a static mesh with that on a moving mesh.

The solution is: $u(t,x,y)\! =\! (1-e^{-t})[\frac{e^{(x-1)/\varepsilon}-1}{e^{-1/\varepsilon}-1}\! +\!x\! -\!1][\frac{e^{(y-1)/\varepsilon}-1}{e^{-1/\varepsilon}-1} \!+\! y\! -\! 1]$ in the square $[0,1]^2$. It has a boundary layer of size $o(\varepsilon)$ on the upper and right boundaries. The advection velocity is $\textbf{V}(t,x,y)\! =\! (1,1)^T\! +\! \Tilde{\textbf{V}}(t,x,y)$ where $\Tilde{\textbf{V}}$ is a divergence-free velocity such that $\Tilde{\textbf{V}}(t,x,y)\! =\! 2^{16} (h(y)h'(x),\! -h(x)h'(y))^T$ with ${h(x)\! =\! (x(1-x))^2}$ and the right hand side is computed correspondingly. In this case, the remaining advection is divergence-free for both the static and moving mesh simulations. $\Tilde{\textbf{V}}$ advects the flow on the closed curves as shown in figure \ref{fig:1} with a maximal velocity along the black solid line.

\begin{center}
    \begin{figure}[h]
        \centering
        \includegraphics[trim=0cm 1cm 0cm 1cm,scale=0.3]{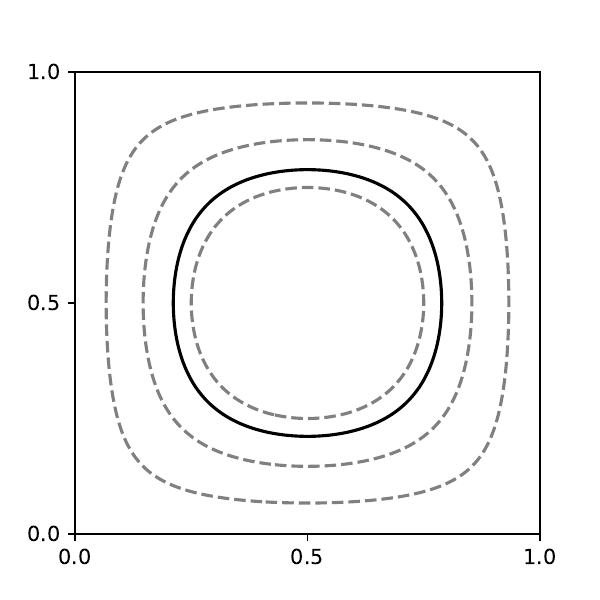}
        \caption{Curves on which the particles are advected by the velocity $\Tilde{\textbf{V}}$}
        \label{fig:1}
    \end{figure}
\end{center}

We choose to execute the simulation on a regular triangular mesh of size $h \approx \frac{1}{9}$ with $\varepsilon=\frac{1}{100}$. For the time-stepping we use an explicit Runge-Kutta scheme of order 4 with timestep $\Delta t=2^{-16}$ for the DG semi-discretisation and an explicit RK4 with timestep $\frac{\Delta t}{2}$ for the ODE defining the flowmap. In figure \ref{fig:2} we plot the error and the spatial criteria respectively after one and twelve timesteps for the moving mesh and static mesh simulation.

\begin{figure}%
\centering
\includegraphics[trim=7.5cm 1.5cm 8cm 1cm,scale=0.49]{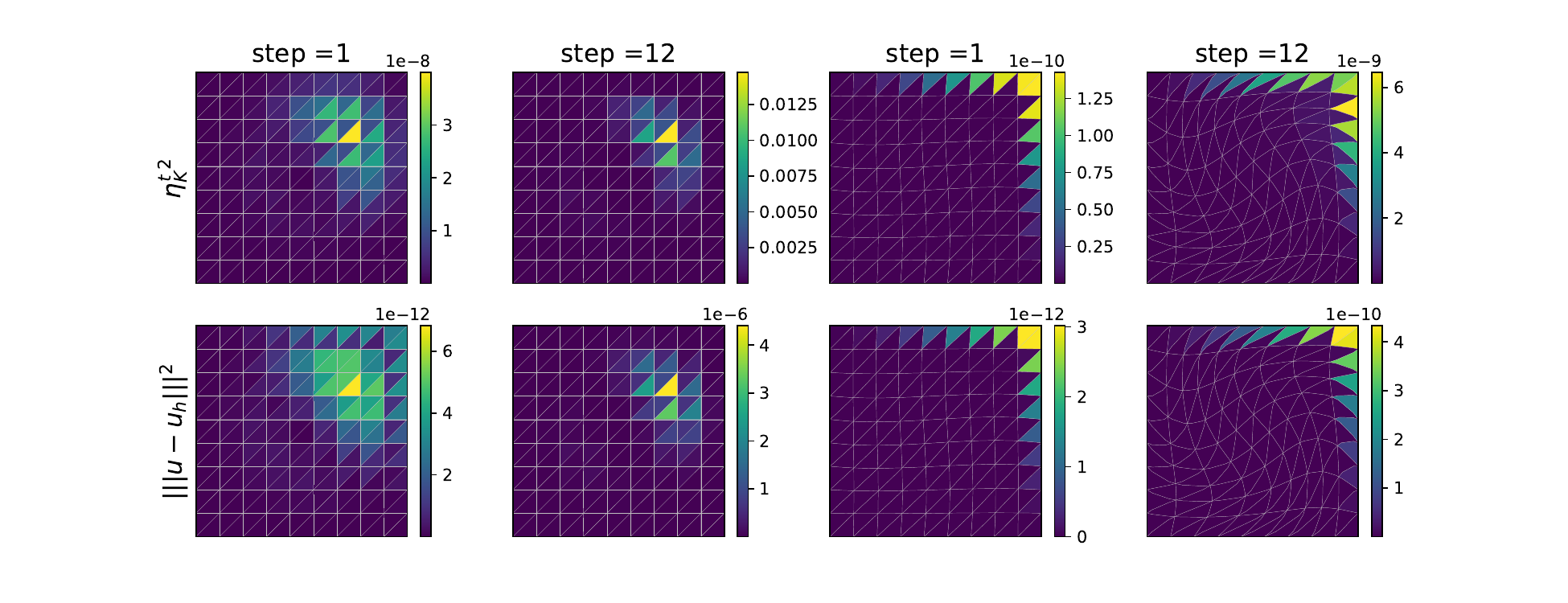}
\caption{Plot of  the criterion (upper row) and the error (lower row) after one and after twelve time steps for a static mesh (left) and a moving mesh (right). Note the differences in the scaling, which is left for better readability.}
\label{fig:2}
\end{figure}

In both of these simulations we can see that the error criteria can accurately catch the error. In the static mesh simulation the error is dominated by the advective part, this makes the boundary layer problem less relevant and the refinement would focus on the area where the advection velocity is maximal (black curve in figure \ref{fig:1}) and would not resolve smaller scale effects like the boundary layer. In the moving mesh situation we can see that the simulation is more accurate everywhere and in particular in the center of the square and the area with maximal error is on the boundary. Consequently, the mesh refinement would focus on the boundary layer and resolve this small scale effect.

In the moving mesh simulation, we can also see that for longer times, there would be a point where entanglement of the mesh occurs. Since theoretically $\chi(t,\cdot)\in C^2(\hat{\Omega},\Omega)$, the trajectories are not crossing, therefore entanglement can be avoided by improving the resolution of the moving mesh's ODE (for instance spatially more accurate with more computed trajectories). This is independent of the DG method and thus not treated here.

\section{Conclusion}\label{sec:conc}

In this work we established interior penalty discontinuous Galerkin methods for the semi-discretisation of an Arbitrary Lagrangian-Eulerian formulation in unsteady advection-diffusion problems. By discretising the problem via a dynamically deforming map, we used the existing analytic techniques for advection-diffusion problems with continuous diffusion tensors. This lead us to derive \textit{a priori} error estimates and made the establishment of \textit{a posteriori} error criteria possible. The reliability of the \textit{a posteriori} error estimations were then discussed in a numerical test.

The \textit{a priori} error estimate shows a condition on the moving mesh velocity for the spatial convergence. This results in a second order convergence in space when the polynomial order is larger than 1, the mesh Peclet number is large and the remaining advection velocity is smaller than the diffusion term. This is a higher order than the one presented in \cite{dolesji15}.

By focusing on the available data, we derived specific \textit{a posteriori} criteria for the moving mesh method in two spatial dimensions. The robustness of these error criteria in terms of the mesh Peclet number allowed us to scale the error criteria with the square of the local remaining advection speed (called $\delta$). This behaviour is confirmed by the test cases where the error in terms of the energy-norm appears to strongly depend on this local speed. 

In section \ref{sec:testcase} we showed that this moving mesh method inherits from the robustness properties of the DG method on static meshes. Similarly, the \textit{a posteriori} error criteria are able to robustly represent the error.

\backmatter

\bmhead{Supplementary information}

This article is accompanied with a proof of what is stated in remark \ref{rq:velocity}.

\section*{Declarations}

\textbf{Competing Interests} The authors have no competing interests to declare that are relevant to the content of this article.

\begin{appendices}

\section{Proof of remark \ref{rq:velocity}}\label{secA1}

In this appendix we treat the question of the proof of remark \ref{rq:velocity} in the paper. The subject of the remark is to show that $\Tilde{\textbf{V}}(t,\cdot)\in W^{3,\infty}(\Omega)^2$ is sufficient so $\forall \textbf{q} \in \mathbb{R}^2, \; J^\frac{1}{2}\textbf{F}^{-T}\textbf{q}\in H^2(\mathcal{T}_h)^2$. Aditionnally we also give a value to the following exponential term $C_2(t)$. Suppose $\Tilde{\textbf{V}}\in W^{3,\infty}(\Omega)$, let

\begin{align} \label{apdx:defs}
    \textbf{B}(t,x)=& \frac{\nabla_x \cdot \Tilde{\textbf{V}}}{2} \mbox{\textbf{Id}} - (\nabla_x \Tilde{\textbf{V}})^T \nonumber\\
    C_0(t) =&  2||\textbf{B}||_{(1,0)} \nonumber\\
    C_1(t)=&4||\textbf{B}||_{(1,0)}+||\textbf{B}||_{(1,1)} + 2||\, ||D(\Tilde{\textbf{V}})||_2 \, ||_{(1,0)} + \ln(2)\\
    C_2(t) =& 6||\textbf{B}||_{(1,0)}+[5 +2\sqrt{2}\,||\,||\nabla_x D(\Tilde{\textbf{V}})||_2\,||_{(1,0)}]||\textbf{B}||_{(1,1)}\nonumber\\
    &+2\sqrt{2} ||\textbf{B}||_{(1,2)} + 4\sqrt{2}\, ||\, ||D(\Tilde{\textbf{V}})||_2 \, ||_{(1,0)}\nonumber
\end{align}

\noindent We define some norms on matrices, for $\textbf{A}\in \mathcal{M}^{n}(\mathbb{R})$ we write $|||\textbf{A}|||$ the operator norm and $||\textbf{A}||_2^2=\underset{i,j}{\sum} (A_{ij})^2$.

\noindent For $\textbf{A} \colon \Omega \rightarrow \mathcal{M}^{n}(\mathbb{R})$ we define the partial derivative in the reference variable $X$ of order $\alpha$ of $\textbf{A}$ as $\partial_\alpha \textbf{A} = (\partial_\alpha A_{ij})_{i,j}$, if $|||\textbf{A}||| \in L^\infty (\Omega)$ we write $|||\textbf{A}|||_\infty = \underset{x\in \Omega}{\max}|||\textbf{A}(x)|||$ and for $p\in \mathbb{N}$, $|\textbf{A}|_{W^{p,\infty}(\Omega)}=\underset{|\alpha|=p}{\max}|||\partial_\alpha \textbf{A}|||_\infty$. We also write $\partial_{x_j}$ the partial derivative in the spatial variable $x$. We recall that we work with 2 spatial dimensions.

\noindent Finally for $\textbf{A} \colon [0,t] \times \Omega \rightarrow \mathcal{M}^{n}(\mathbb{R})$ and for $p,q \in \mathbb{N}$ let $||\textbf{A}||_{(p,q)}=(\int_0^t |\textbf{A}|_{W^{q,\infty}}^p)^\frac{1}{p}$.

\noindent We will prove the following lemma that implies remark 4.2.

\begin{lemma} \label{apdx:lmvelo}
Let $\textbf{B},C_0,C_1,C_2$ defined in \eqref{apdx:defs}, let $(\textbf{q}_K)_{K\in\mathcal{T}_h}$ a collection of vectors such that $||\textbf{q}_K||=1$. Finally we write $\textbf{Y}$ such that $\textbf{Y}|_K=J^\frac{1}{2}\textbf{F}^{-T}\textbf{q}_K$. Then:

\begin{align}
    ||\textbf{Y}||_{L^2(K)}^2 &\leq|K| \exp(C_0(t))\label{apdx:Y1}\\ 
    |\textbf{Y}|_{H^1(K)}^2 &\leq|K| \exp(C_1(t))\label{apdx:Y2}\\ 
    |\textbf{Y}|_{H^2(K)}^2  &\leq|K| \exp(C_2(t))\label{apdx:Y3}
\end{align}

\end{lemma}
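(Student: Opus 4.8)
The plan is to observe that, on each element $K$, the vector field $\textbf{Y}|_K = J^{1/2}\textbf{F}^{-T}\textbf{q}_K$ solves, along the characteristics of the flow map, a linear first-order ODE with constant initial data. Indeed, differentiating in time and using the identities $\dot J = J(\nabla_x\cdot\tilde{\textbf{V}})$, $\dot{\textbf{F}^{-1}} = -\textbf{F}^{-1}(\nabla_x\tilde{\textbf{V}})$ recalled in Section~\ref{sec:flowmaps}, one gets
\[ \dot{\textbf{Y}} = \Big[\tfrac12(\nabla_x\cdot\tilde{\textbf{V}})\,\textbf{Id} - (\nabla_x\tilde{\textbf{V}})^T\Big]\textbf{Y} = \textbf{B}\,\textbf{Y},\qquad \textbf{Y}(0,\cdot)\equiv\textbf{q}_K, \]
with $\textbf{B}=\textbf{B}(t,\chi(t,X))$ as in \eqref{apdx:defs}. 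Since $|\textbf{q}_K|=1$ we have $\|\textbf{Y}(0,\cdot)\|^2_{L^2(K)}=|K|$, and from $\tfrac{d}{dt}|\textbf{Y}|^2 = 2\textbf{Y}^T\textbf{B}\textbf{Y}\le 2|\textbf{B}|_{W^{0,\infty}}|\textbf{Y}|^2$ one obtains \eqref{apdx:Y1} immediately by integrating over $K$ and applying Grönwall's inequality, with exponent $C_0(t)=2\|\textbf{B}\|_{(1,0)}$.

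For \eqref{apdx:Y2} and \eqref{apdx:Y3} the plan is to differentiate this ODE in the Lagrangian variable $X$ once, respectively twice. Each differentiation yields an inhomogeneous linear ODE with zero initial data: at first order $\tfrac{d}{dt}(\partial_{X_j}\textbf{Y}) = \textbf{B}\,\partial_{X_j}\textbf{Y} + (\partial_{X_j}\textbf{B})\textbf{Y}$, and at second order the forcing also contains $(\partial_{X_k}\textbf{B})\partial_{X_j}\textbf{Y}$, $(\partial_{X_j}\textbf{B})\partial_{X_k}\textbf{Y}$ and $(\partial^2_{X_kX_j}\textbf{B})\textbf{Y}$. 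By the chain rule $\partial_{X_j}(\textbf{A}(t,\chi)) = \sum_k(\partial_{x_k}\textbf{A})(t,\chi)F_{kj}$, these forcing terms are controlled by the $W^{1,\infty}$- and $W^{2,\infty}$-seminorms of $\textbf{B}$ in the spatial variable --- equivalently, by the second and third spatial derivatives of $\tilde{\textbf{V}}$ --- multiplied by powers of $\textbf{F}$ and, at second order, by $\nabla_X\textbf{F}$. One therefore needs a priori bounds on $|||\textbf{F}|||$ and $\|\nabla_X\textbf{F}\|_2$; these come from $\dot{\textbf{F}}=(\nabla_x\tilde{\textbf{V}})\textbf{F}$, $\textbf{F}(0)=\textbf{I}$, for which $\tfrac{d}{dt}|||\textbf{F}|||\le|||\nabla_x\tilde{\textbf{V}}|||\,|||\textbf{F}|||$ together with $|||\textbf{A}|||\le\|\textbf{A}\|_2$ gives $|||\textbf{F}(t)|||\le\exp(\|\,\|D(\tilde{\textbf{V}})\|_2\,\|_{(1,0)})$, and from the $X$-derivative of that same ODE, whose inhomogeneity lives in $\nabla_x D(\tilde{\textbf{V}})$, by one more application of Grönwall.

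The estimates are then assembled in a cascade. Testing the $X$-differentiated equation against $\partial_{X_j}\textbf{Y}$, summing over $j\in\{1,2\}$, bounding the cross term by Cauchy--Schwarz on $\mathbb{R}^2$ and feeding in the $L^2(K)$-bound \eqref{apdx:Y1} on $\textbf{Y}$ together with the bound on $|||\textbf{F}|||$, one arrives at a scalar differential inequality $\tfrac{d}{dt}\|\nabla_X\textbf{Y}\|^2_{L^2(K)}\le a(t)\|\nabla_X\textbf{Y}\|^2_{L^2(K)}+b(t)$ with $\int_0^t a$ and $b$ explicit in the data; solving it with the integrating factor $\exp(\int_0^t a)$ and estimating the remaining time integral so that each $|\textbf{B}|_{W^{q,\infty}}$- and $\|D(\tilde{\textbf{V}})\|_2$-factor is kept inside (rather than pulled out of) the exponential produces \eqref{apdx:Y2} with exponent exactly $C_1(t)$ --- the $\sqrt2$'s in $C_1$ coming from the two spatial dimensions and from passing between operator and Frobenius norms, the additive $\ln2$ from the final integral estimate. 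Repeating the argument verbatim at second order, now using \eqref{apdx:Y1}, \eqref{apdx:Y2} and the bounds on $\textbf{F},\nabla_X\textbf{F}$ as inputs, yields \eqref{apdx:Y3} with exponent $C_2(t)$. Applying this on every element with $\textbf{q}_K$ an arbitrary unit vector proves Lemma~\ref{apdx:lmvelo}, and choosing $\alpha_0,\alpha_1,\alpha_2$ to dominate the coefficients of the corresponding $W^{p,\infty}$-seminorms recovers Remark~\ref{rq:velocity}.

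The conceptual content is light: it is a finite iteration of the scheme "linear ODE $+$ Grönwall". The main obstacle is entirely organisational --- carrying out the chain-rule differentiation carefully enough to see that the $k$-th $X$-derivative of $\textbf{Y}$ produces only spatial derivatives of $\tilde{\textbf{V}}$ up to order $k+1$, and that the accompanying $\textbf{F}$- and $\nabla_X\textbf{F}$-factors cost only exponentials in $\|D(\tilde{\textbf{V}})\|_2$ that are already accounted for, so that the final exponents are exactly $C_0,C_1,C_2$ and not merely comparable. A secondary technical nuisance, if one prefers to work with $\|\cdot\|_{L^2(K)}$ rather than its square, is justifying the resulting differential inequality at the instants, of measure zero in time, where that norm vanishes.
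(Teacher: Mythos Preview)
Your proposal is correct and follows essentially the same route as the paper: derive $\dot{\textbf{Y}}=\textbf{B}\textbf{Y}$, differentiate in $X$ to obtain a cascade of inhomogeneous linear ODEs whose forcing terms are controlled via the chain rule and Grönwall-type bounds on $\textbf{F}$ (and $\nabla_X\textbf{F}$), then close each level by Grönwall using the previous one. The paper's version differs only in packaging --- it organises the cross terms as traces of explicit block matrices rather than invoking operator-norm bounds on $\textbf{F}$ --- and a small slip: the $\sqrt{2}$'s you mention appear in $C_2$, not $C_1$.
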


\begin{proof}
    We first notice that since $\Tilde{\textbf{V}}\in W^{3,\infty}(\Omega)$ then $\textbf{B}\in W^{2,\infty}(\Omega)$.
    
\noindent \textbf{Proof of \eqref{apdx:Y1}:} by noticing that $\dot{\textbf{Y}}=\textbf{B}\textbf{Y}$

$\frac{d}{dt}||\textbf{Y}||_{L^2(K)^2}^2\leq 2|||\textbf{B}|||_\infty \cdot ||\textbf{Y}||_{L^2(K)}^2$

\noindent And by Gronwall's inequality: 
\begin{equation*}
    ||\textbf{Y}||_{L^2(K)}^2 \leq|K| \exp(2||\textbf{B}||_{(1,0)})
\end{equation*}

\noindent \textbf{Proof of \eqref{apdx:Y2}:} we notice here that 

$\dot{(\partial_i \textbf{Y})}=\partial_i \dot{\textbf{Y}} = (\partial_i \textbf{B})\textbf{Y}+\textbf{B}(\partial_i \textbf{Y})=\textbf{B}(\partial_i \textbf{Y})+\textbf{Y} \underset{j=1,2}{\sum}\partial_i x_j \, \partial_{x_j}\textbf{B} $.

\noindent Let $\textbf{A}_1,\textbf{A}_2\in \mathcal{M}^{2}(\mathbb{R})$ define as:

$\textbf{A}_1 = \begin{pmatrix}
  ((\partial_{x_1}\textbf{B})\textbf{Y})^T \\
  ((\partial_{x_2}\textbf{B})\textbf{Y})^T
\end{pmatrix}$ and $\textbf{A}_2 = (\partial_1 \textbf{Y} \; \partial_2 \textbf{Y})$.

Then $\frac{d}{dt}|\textbf{Y}|_{H^1(K)}^2=2\int_K \mbox{Tr}(\textbf{F}^T\textbf{A}_1\textbf{A}_2)+\textbf{B}\partial_1 \textbf{Y} \cdot \partial_1 \textbf{Y}+\textbf{B}\partial_2 \textbf{Y} \cdot \partial_2 \textbf{Y}$
\begin{align*}
\frac{d}{dt}|\textbf{Y}|_{H^1(K)}^2&=2\int_K \mbox{Tr}(\textbf{F}^T\textbf{A}_1\textbf{A}_2)+\textbf{B}\partial_1 \textbf{Y} \cdot \partial_1 \textbf{Y}+\textbf{B}\partial_2 \textbf{Y} \cdot \partial_2 \textbf{Y}\\
    & \leq 2(\int_K \mbox{Tr}(\textbf{A}_1^T\textbf{A}_1))^\frac{1}{2} (\int_K \mbox{Tr}(\textbf{A}_2\textbf{F}^T\textbf{F}\textbf{A}_2^T))^\frac{1}{2}+2|||\textbf{B}|||_\infty \cdot |\textbf{Y}|_{H^1(K)}^2
\end{align*}

Now since for all $\textbf{G}\in\mathcal{M}^2(\mathbb{R})$:
\begin{align*}
    \frac{d}{dt} \mbox{Tr}(\textbf{G}\textbf{F}^T\textbf{F}\textbf{G}^T)&=2\mbox{Tr}(\textbf{G}\textbf{F}^TD(\Tilde{\textbf{V}})\textbf{F}\textbf{G}^T)\\ 
    & \leq 2 \mbox{Tr}(D(\Tilde{\textbf{V}})^2)^\frac{1}{2}\mbox{Tr}((\textbf{G}\textbf{F}^T\textbf{F}\textbf{G}^T)^2)^\frac{1}{2}  \\
    &\leq  2 \mbox{Tr}(D(\Tilde{\textbf{V}})^2)^\frac{1}{2}\mbox{Tr}(\textbf{G}\textbf{F}^T\textbf{F}\textbf{G}^T)
\end{align*}

\noindent Where the last inequality holds since $\textbf{G}\textbf{F}^T\textbf{F}\textbf{G}^T$ symmetric positive.

By Gronwall's inequality:
\begin{equation}
    \mbox{Tr}(\textbf{G}\textbf{F}^T\textbf{F}\textbf{G}^T)\leq e^{2||\,||D(\Tilde{\textbf{V}})||_2\,||_{(1,0)}} \mbox{Tr}(\textbf{G}\textbf{G}^T) \label{apdx:trace}
\end{equation}

\noindent Moreover $\left \{ \begin{array}{ll}
\int_K \mbox(\textbf{A}_2\textbf{A}_2^T) = |\textbf{Y}|_{H^1(K)}^2 \\
\int_K \mbox(\textbf{A}_1\textbf{A}_1^T) \leq 2||\textbf{B}||_{W^{1,\infty}}^2||\textbf{Y}||_{L^2(K)}^2
\end{array}
\right.$

\noindent Finally 
\begin{align*}
    \frac{d}{dt}|\textbf{Y}|_{H^1(K)}^2\leq(2||\textbf{B}||_{W^{0,\infty}}&+||\textbf{B}||_{W^{1,\infty}})|\textbf{Y}|_{H^1(K)}^2\\
    &+2||\textbf{B}||_{W^{1,\infty}}e^{2||\,||D(\Tilde{\textbf{V}})||_2\,||_{(1,0)}}||\textbf{Y}||_{L^2(K)}^2
\end{align*}

\noindent And Gronwall's inequality gives \eqref{apdx:Y2}:
\begin{equation*}
    |\textbf{Y}|_{H^1(K)}^2 \leq 2|K|\exp(4||\textbf{B}||_{(1,0)}+||\textbf{B}||_{(1,1)} + 2||\, ||D(\Tilde{\textbf{V}})||_2 \, ||_{(1,0)})
\end{equation*}

\noindent \textbf{Proof of \eqref{apdx:Y3}:} we notice here that 

$\dot{(\partial_{ij} \textbf{Y})}=(\partial_{ij}\textbf{B})\textbf{Y} + (\partial_i\textbf{B})(\partial_j \textbf{Y})+ (\partial_j\textbf{B})(\partial_i \textbf{Y}) + \textbf{B}(\partial_{ij}\textbf{Y})$

\noindent Then:

\noindent $\frac{d}{dt}|\textbf{Y}|_{H^2(K)}^2=2 \underset{i,j=1,2}{\sum}\int_K[(\partial_{ij}\textbf{B})\textbf{Y} + (\partial_i\textbf{B})(\partial_j \textbf{Y})+ (\partial_j\textbf{B})(\partial_i \textbf{Y}) + \textbf{B}(\partial_{ij}\textbf{Y})]\cdot \partial_{ij}\textbf{Y}$

Let $\textbf{A}_{1j} = \begin{pmatrix}
  ((\partial_{x_1}\textbf{B})\partial_j\textbf{Y})^T \\
  ((\partial_{x_2}\textbf{B})\partial_j\textbf{Y})^T
\end{pmatrix}$ and $\textbf{A}_{2j} = (\partial_1 \partial_j \textbf{Y} \; \partial_2 \partial_j \textbf{Y})$.

\noindent Then 
\begin{align*}
    \underset{i=1,2}{\sum}\int_K (\partial_i\textbf{B})(\partial_j \textbf{Y})\cdot \partial_{ij}\textbf{Y}&=\int_K \mbox{Tr}(\textbf{F}^T\textbf{A}_{1j}\textbf{A}_{2j})\\
    & \leq (\int_K \mbox{Tr}(\textbf{A}_{1j}^T\textbf{A}_{1j}))^\frac{1}{2} (\int_K \mbox{Tr}(\textbf{A}_{2j}\textbf{F}^T\textbf{F}\textbf{A}_{2j}^T))^\frac{1}{2}\\
    & \leq \sqrt{2} ||\textbf{B}||_{W^{1,\infty}}e^{||\,||D(\Tilde{\textbf{V}})||_2\,||_{(1,0)}}||\partial_j\textbf{Y}||_{L^2(K)}|\partial_j\textbf{Y}|_{H^1(K)} \\
    & \leq ||\textbf{B}||_{W^{1,\infty}}e^{2||\,||D(\Tilde{\textbf{V}})||_2\,||_{(1,0)}}||\partial_j\textbf{Y}||_{L^2(K)}^2\\
    &\hspace{1cm} + \frac{||\textbf{B}||_{W^{1,\infty}}}{2}|\partial_j\textbf{Y}|_{H^1(K)}^2
\end{align*}

\noindent Thus 
\begin{align*}
    &\frac{d}{dt}|\textbf{Y}|_{H^2(K)}^2 \leq (2||\textbf{B}||_{W^{0,\infty}}+4||\textbf{B}||_{W^{1,\infty}})|\textbf{Y}|_{H^2(K)}^2\\
    &+2||\textbf{B}||_{W^{1,\infty}}e^{2||\,||D(\Tilde{\textbf{V}})||_2\,||_{(1,0)}}|\textbf{Y}|_{H^1(K)}^2+2 \underset{i,j=1,2}{\sum}\int_K(\partial_{ij}\textbf{B})\textbf{Y}\cdot \partial_{ij}\textbf{Y}
\end{align*} 

We finally bound the last sum of the previous inequality:
\begin{align*}
\underset{i,j=1,2}{\sum}\int_K(\partial_{ij}\textbf{B})\textbf{Y}\cdot \partial_{ij}\textbf{Y} =& \underset{i,j=1,2}{\sum}\int_K \partial_i\{\partial_jx_1\partial_{x_1}\textbf{B}+\partial_jx_2\partial_{x_2}\textbf{B}\}\textbf{Y}\cdot \partial{ij}\textbf{Y}\\
\textbf{S}_1 \leftarrow \hspace{1cm} =&\underset{i,j=1,2}{\sum}\int_K \{\partial_{ij}x_1\partial_{x_1}\textbf{B}+\partial_{ij}x_2\partial_{x_2}\textbf{B}\}\textbf{Y}\cdot \partial{ij}\textbf{Y}\\
\textbf{S}_2 \leftarrow \hspace{1.3cm} &+\underset{i,j=1,2}{\sum}\int_K\{\partial_{i}x_1\partial_{j}x_1\partial_{x_1}^2\textbf{B}+\partial_{i}x_1\partial_{j}x_2\partial_{x_1x_2}^2\textbf{B}\\
&+\partial_{j}x_1\partial_{i}x_2\partial_{x_1x_2}^2\textbf{B}+\partial_{i}x_2\partial_{j}x_2\partial_{x_2}^2\textbf{B}\}\textbf{Y}\cdot \partial{ij}\textbf{Y}
\end{align*}

\noindent We define $\textbf{A}_{3k}\! =\! \begin{pmatrix}
  ((\partial_{x_jx_1}\textbf{B})\partial_j\textbf{Y})^T \\
  ((\partial_{x_jx_2}\textbf{B})\partial_j\textbf{Y})^T
\end{pmatrix}$
and $\textbf{C}\! =\! (\mbox{Tr}(\textbf{F}^T\textbf{A}_{3j}\textbf{A}_{2i}))_{i,j=1,2}$.\\

\noindent Then $\textbf{S}_2\! =\! \int_K \! \mbox{Tr}(\textbf{C}\textbf{F})$ and:
\begin{align*}
    \textbf{S}_2 &\leq (\int_K \mbox{Tr}(\textbf{C}\textbf{C}^T))^\frac{1}{2}(\int_K \mbox{Tr}(\textbf{F}\textbf{F}^T))^\frac{1}{2} \\
    &\leq \sqrt{2} e^{||\,||D(\Tilde{\textbf{V}})||_2\,||_{(1,0)}}(\int_K \mbox{Tr}(\textbf{C}\textbf{C}^T))^\frac{1}{2}\\
    &\leq \sqrt{2} e^{||\,||D(\Tilde{\textbf{V}})||_2\,||_{(1,0)}}(\int_K \underset{i,j=1,2}{\sum}\mbox{Tr}(\textbf{F}^T\textbf{A}_{3j}\textbf{A}_{2i})^2)^\frac{1}{2}\\
    &\leq 2\sqrt{2} e^{2||\,||D(\Tilde{\textbf{V}})||_2\,||_{(1,0)}}||\textbf{B}||_{W^{2,\infty}}||\textbf{Y}||_{L^2(K)}|\textbf{Y}|_{H^2(K)}
\end{align*}

\noindent Finally for bounding $\textbf{S}_1$ we define the following matrices $\mathcal{F},\textbf{G},\textbf{H},\textbf{C}_1,\textbf{C}_2 \in \mathcal{M}^4(\mathbb{R})$:

 $\mathcal{F}=\begin{pmatrix}
  \partial_1x_1\mbox{\textbf{Id}}&\partial_2x_1\mbox{\textbf{Id}} \\
  \partial_1x_2\mbox{\textbf{Id}}&\partial_2x_2\mbox{\textbf{Id}}
\end{pmatrix}$, $\textbf{G}=\begin{pmatrix}
  \partial_1\textbf{F}&0 \\
  0&\partial_2\textbf{F}
\end{pmatrix}$, $\textbf{H}=\begin{pmatrix}
  \textbf{F}&0 \\
  0&\textbf{F}
\end{pmatrix}$

$\textbf{C}_1=\begin{pmatrix}
  \textbf{A}_1&0 \\
  0&\textbf{A}_1
\end{pmatrix}$, $\textbf{C}_2=\begin{pmatrix}
  \textbf{A}_{21}&0 \\
  0&\textbf{A}_{22}
\end{pmatrix}$

\noindent With these notations we have that:

\begin{equation*}
    \textbf{S}_1=\int_K \mbox{Tr}(\textbf{G}^T\textbf{C}_1\textbf{C}_2)\leq (\int_K \mbox{Tr}(\textbf{C}_1\textbf{C}_1^T))^\frac{1}{2}(\int_K \mbox{Tr}(\textbf{C}_2\textbf{G}^T\textbf{G}\textbf{C}_2))^\frac{1}{2}
\end{equation*}

\noindent For all $\textbf{A}\in \mathcal{M}^4(\mathbb{R})$
\begin{align*}
    \frac{d}{dt}\mbox{Tr}(\textbf{A}\textbf{G}^T\textbf{G}\textbf{A}^T)=&2\mbox{Tr}(\textbf{A}\textbf{G}^T
\begin{pmatrix}
  D(\Tilde{\textbf{V}})&0 \\
  0&D(\Tilde{\textbf{V}})
\end{pmatrix}
\textbf{G}\textbf{A}^T) \\
&+ 2\mbox{Tr}(\textbf{A}\textbf{H}^T
\begin{pmatrix}
  \partial_1D(\Tilde{\textbf{V}})&0 \\
  0&\partial_2D(\Tilde{\textbf{V}})
\end{pmatrix}
\textbf{H}\textbf{A}^T) \\
 \leq & 2\sqrt{2}\mbox{Tr}(D(\Tilde{\textbf{V}})^2)^\frac{1}{2}\mbox{Tr}(\textbf{A}\textbf{G}^T\textbf{G}\textbf{A}^T)\\
&+2\mbox{Tr}((\partial_1D(\Tilde{\textbf{V}}))^2+(\partial_2D(\Tilde{\textbf{V}}))^2)^\frac{1}{2}\mbox{Tr}(\textbf{A}\textbf{H}^T\textbf{H}\textbf{A}^T)
\end{align*}

\noindent And $\begin{pmatrix}
  \partial_1D(\Tilde{\textbf{V}})&0 \\
  0&\partial_2D(\Tilde{\textbf{V}})
\end{pmatrix} = (\mathcal{F}^T\begin{pmatrix}
  \partial_{x_1}D(\Tilde{\textbf{V}}) \\
  \partial_{x_2}D(\Tilde{\textbf{V}})
\end{pmatrix})^T(\mathcal{F}^T\begin{pmatrix}
  \partial_{x_1}D(\Tilde{\textbf{V}}) \\
  \partial_{x_2}D(\Tilde{\textbf{V}})
\end{pmatrix})$
implies that:
\begin{equation*}
    \mbox{Tr}((\partial_1D(\Tilde{\textbf{V}}))^2+(\partial_2D(\Tilde{\textbf{V}}))^2)\leq e^{2\sqrt{2}||\,||D(\Tilde{\textbf{V}})||_2\,||_{(1,0)}}\mbox{Tr}((\partial_{x_1}D(\Tilde{\textbf{V}}))^2+(\partial_{x_2}D(\Tilde{\textbf{V}}))^2)
\end{equation*}

Also: $\mbox{Tr}(\textbf{A}\textbf{H}^T\textbf{H}\textbf{A}^T)\leq e^{2\sqrt{2}||\,||D(\Tilde{\textbf{V}})||_2\,||_{(1,0)}}\mbox{Tr}(\textbf{A}\textbf{A}^T)$.

Thus 
\begin{align*}
    \frac{d}{dt}\mbox{Tr}(\textbf{A}\textbf{G}^T\textbf{G}\textbf{A}^T)&\leq 2\sqrt{2}||D(\Tilde{\textbf{V}})||_2\mbox{Tr}(\textbf{A}\textbf{G}^T\textbf{G}\textbf{A}^T)\\
    &+2e^{2\sqrt{2}||\,||D(\Tilde{\textbf{V}})||_2\,||_{(1,0)}}||\nabla_xD(\Tilde{\textbf{V}})||_2\mbox{Tr}(\textbf{A}\textbf{A}^T)
\end{align*}

And by Gronwall's inequality:
\begin{align*}
    \mbox{Tr}(\textbf{A}\textbf{G}^T\textbf{G}\textbf{A}^T)&\leq 2e^{4\sqrt{2}||\,||D(\Tilde{\textbf{V}})||_2\,||_{(1,0)}}||\,||\nabla_xD(\Tilde{\textbf{V}})||_2\,||_{(1,0)}\mbox{Tr}(\textbf{A}\textbf{A}^T)
\end{align*}

\noindent And since $\int_K\mbox{Tr}(\textbf{C}_1\textbf{C}_1^T)\leq4||\textbf{B}||_{W^{1,\infty}}^2||\textbf{Y}||_{L^2(K)}^2$ and $\int_K\mbox{Tr}(\textbf{C}_2^T\textbf{C}_2)=|\textbf{Y}|_{H^2(K)}^2$:
\begin{equation*}
    \textbf{S}_1\leq 2\sqrt{2}||\textbf{B}||_{W^{1,\infty}}||\,||\nabla_xD(\Tilde{\textbf{V}})||_2\,||_{(1,0)}e^{2\sqrt{2}||\,||D(\Tilde{\textbf{V}})||_2\,||_{(1,0)}}|\textbf{Y}|_{H^2(K)}||\textbf{Y}||_{L^2(K)}
\end{equation*}

And consequently:
\begin{align*}
    \frac{d}{dt}|\textbf{Y}|_{H^2(K)}^2 \leq & (2||\textbf{B}||_{W^{0,\infty}}+4||\textbf{B}||_{W^{1,\infty}})|\textbf{Y}|_{H^2(K)}^2\\
    &+2||\textbf{B}||_{W^{1,\infty}}e^{2||\,||D(\Tilde{\textbf{V}})||_2\,||_{(1,0)}}|\textbf{Y}|_{H^1(K)}^2\\
    &+4\sqrt{2} (e^{2\sqrt{2}||\,||D(\Tilde{\textbf{V}})||_2\,||_{(1,0)}}||\,||\nabla_xD(\Tilde{\textbf{V}})||_2\,||_{(1,0)}||\textbf{B}||_{W^{1,\infty}}\\
    &+e^{2||\,||D(\Tilde{\textbf{V}})||_2\,||_{(1,0)}}||\textbf{B}||_{W^{2,\infty}})||\textbf{Y}||_{L^2(K)}|\textbf{Y}|_{H^2(K)}\\
    \leq & (2||\textbf{B}||_{W^{0,\infty}}+(4+2\sqrt{2}||\,||\nabla_xD(\Tilde{\textbf{V}})||_2\,||_{(1,0)})||\textbf{B}||_{W^{1,\infty}}\\
    &+2\sqrt{2}||\textbf{B}||_{W^{2,\infty}})|\textbf{Y}|_{H^2(K)}^2+2||\textbf{B}||_{W^{1,\infty}}e^{2||\,||D(\Tilde{\textbf{V}})||_2\,||_{(1,0)}}|\textbf{Y}|_{H^1(K)}^2\\
    &+2\sqrt{2} e^{4\sqrt{2}||\,||D(\Tilde{\textbf{V}})||_2\,||_{(1,0)}}(||\,||\nabla_xD(\Tilde{\textbf{V}})||_2\,||_{(1,0)}||\textbf{B}||_{W^{1,\infty}}\\
    &+||\textbf{B}||_{W^{2,\infty}})||\textbf{Y}||_{L^2(K)}^2
\end{align*}

And by using \eqref{apdx:Y1}, \eqref{apdx:Y2} and Gronwall's inequality:
\begin{multline*}
    |\textbf{Y}|_{H^2(K)}^2  \leq|K| \exp(6||\textbf{B}||_{(1,0)}+[5 +2\sqrt{2}\,||\,||\nabla_x D(\Tilde{\textbf{V}})||_2\,||_{(1,0)}]||\textbf{B}||_{(1,1)}\\
    +2\sqrt{2} ||\textbf{B}||_{(1,2)}+ 4\sqrt{2}\, ||\, ||D(\Tilde{\textbf{V}})||_2 \, ||_{(1,0)})
\end{multline*}

\end{proof}

\end{appendices}


\bibliography{sn-bibliography}

\end{document}